\documentclass[12pt]{imsart}
\usepackage{amsthm,amsmath}
\usepackage[colorlinks,citecolor=blue,urlcolor=blue]{hyperref}

\usepackage{amsmath,amsfonts,amssymb,amsthm}

\usepackage{enumitem}

\RequirePackage{amsthm,amsmath,amsfonts,amssymb}
\RequirePackage[numbers]{natbib}
\RequirePackage[colorlinks,citecolor=blue,urlcolor=blue]{hyperref}
\RequirePackage{graphicx}
\usepackage{lineno,hyperref}

\startlocaldefs
\modulolinenumbers[5]

\usepackage[utf8]{inputenc}
\usepackage[english]{babel}

\usepackage[margin=1in]{geometry}
\usepackage{bm}
\usepackage{graphicx}
\usepackage{amssymb,amsmath,amsthm,mathrsfs}
\usepackage{epstopdf}
\usepackage{algorithm}
\usepackage{amsfonts,delarray}
\usepackage{algorithm,algcompatible,amsmath}
\usepackage{rotating,color}
\usepackage{subfigure}
\usepackage{multirow}
\usepackage{titlesec,textcase}
\usepackage{longtable}
\usepackage{bbm}
\usepackage{rotating}

\newtheorem{Theorem}{Theorem}[section]
\newtheorem{Lemma}[Theorem]{Lemma}
\newtheorem{Corollary}[Theorem]{Corollary}
\newtheorem{Proposition}[Theorem]{Proposition}
\newtheorem{Assumption}{Assumption}

\newtheorem{Definition}[Theorem]{Definition}

\theoremstyle{definition}

\RequirePackage[normalem]{ulem} 

\definecolor{rp}{RGB}{83,54,106}

\def\boxit#1{\vbox{\hrule\hbox{\vrule\kern6pt\vbox{\kern6pt#1\kern6pt}\kern6pt\vrule}\hrule}}

\endlocaldefs

\allowdisplaybreaks

\begin{document}
\begin{frontmatter}
\title{ The weak law of large numbers for the friendship paradox index}

\runtitle{asymptotic distribution of  }
\runauthor{ }
\begin{aug}

\author[A]{\fnms{Mingao} \snm{Yuan}\ead[label=e1]{myuan2@utep.edu}}

\address[A]{Department of Mathematical Sciences,
The University of Texas at El Paso, El Paso, TX, USA\\
\printead{e1}}
\end{aug}

\begin{abstract}
The friendship paradox index is a network summary statistic used to quantify the friendship paradox, which describes the tendency for an individual's friends to have more friends than the individual. 
In this paper, we utilize Markov’s inequality to derive the weak law of large numbers for the friendship paradox index in a random geometric graph, a widely-used model for networks with spatial dependence and geometry.  For uniform random geometric graph, where the nodes are uniformly distributed in a space, the friendship paradox index is asymptotically equal to $1/4$. On the contrary, in nonuniform random geometric graphs, the nonuniform node distribution leads to distinct limiting properties for the index. In the relatively sparse regime, the friendship paradox index is still asymptotically equal to $1/4$, the same as in the uniform case. In the intermediate sparse regime, however, the index converges in probability to $1/4$ plus a constant that is explicitly dependent on the node distribution. Finally, in the relatively dense case, the index diverges to infinity as the graph size increases. Our results highlight the sharp contrast between the uniform case and its nonuniform counterpart.
\end{abstract}

\begin{keyword}[class=MSC2020]
\kwd[]{60K35}
\kwd[;  ]{05C80}
\end{keyword}

\begin{keyword}
\kwd{friendship paradox index}
\kwd{random geometric graph}
\kwd{law of large numbers}
\end{keyword}

\end{frontmatter}

\section{Introduction}
\label{S:1}

Social networks frequently exhibit the friendship paradox, where the mean degree of an individual's contacts consistently outstrips their own number of connections \cite{F91,PYN19,CKN21,KKF24}.
In graph-theoretic terms, this implies that a node's neighbors typically possess higher degrees than the node itself. To quantify this, the friendship paradox index of a node is defined as the difference between the mean degree of its neighbors and its own degree. By extending this local quantity to the entire network, the average friendship paradox index serves as a global metric for the intensity of the friendship paradox \cite{CKN21}. For brevity, we refer to this average index simply as the friendship paradox throughout this paper.

The friendship paradox serves as a vital tool in network data analysis, where it is utilized to reduce variance in survey-based polling, efficiently estimate power-law degree distributions, and quickly detect disease outbreaks \cite{NK19,J19,NK21}. 
 Recent research has further generalized it to more complex structures, including weighted graphs with attributes \cite{EK24}. Beyond its practical applications, several studies have investigated the theoretical properties of the friendship index in random graphs. For instance, \cite{PYN19}  analytically quantified the expected friendship index across various random graph models, while \cite{Y24} characterized the asymptotic distribution of the index specifically for a dense uniform random geometric graph.

Random graphs are graphs where vertices and edges are formed by a random process, modeling real-world complex networks. The famous Erd\H{o}s-R\'{e}nyi  model constructs a graph by connecting each of the \(n\) possible pairs of nodes with an independent probability \(p\). In contrast to the Erd\H{o}s-R\'{e}nyi random graph, the Random Geometric Graph (RGG) is generated by placing nodes at random according to a probability density function \(f(x)\) on the unit interval \([0,1]\) and connecting any two nodes whose distance falls below a specific threshold. Due to their distance-based construction, RGGs provide a more realistic framework for modeling spatial dependence and geometry in empirical data \cite{G21,DC23,GMPS23}.

In this paper, we leverage Markov’s inequality to establish the weak law of large numbers for the friendship index in  random geometric graphs. This task is not as straightforward as it may appear.  Establishing the weak law necessitates the computation of the index's expectation and variance; however, obtaining simple closed-form expressions for these moments is analytically intractable due to the edge dependencies inherent in RGGs. For instance, since the friendship paradox is defined by node degrees, the inherent edge dependencies in RGGs render even the derivation of a simple degree distribution complex. We utilize an approximation method based on the conditional expectation given a node's spatial location.

Our results indicate that in uniform RGGs, where nodes are uniformly distributed, the friendship index converges in probability to \(1/4\).  Conversely, in nonuniform RGGs,  the nonuniform node distribution leads to distinct limiting properties.   In a relatively sparse RGG, the friendship index converges in probability to \(1/4\), which is the limit observed in the uniform case. For intermediate sparse RGG, the friendship index is asymptotically equal to \(1/4\) plus a constant that depends on the specific node distribution. For denser RGG, the friendship index tends to infinity as the graph size  increases. These results underscore the fundamental disparity between the uniform RGGs and nonuniform RGGs.

The remainder of this paper is organized as follows. Section \ref{pre} introduces basic concepts and necessary lemmas. In Section \ref{expecfp}, we derive an asymptotic expression for the mean of the friendship paradox, while Section \ref{fdvar} establishes the asymptotic second moment. These results culminate in Section \ref{wllnfp}, where we prove the weak law of large numbers for the friendship paradox using Markov’s inequality. Finally, Section \ref{fplemprf} provides the detailed proofs for all lemmas.

\medskip

Notations: Throughout this paper, we adopt the Bachmann–Landau notation for asymptotic analysis. Let $c_1,c_2$ be two positive constants. For two positive sequence $a_n$, $b_n$, denote $a_n=\Theta( b_n)$ if $c_1\leq \frac{a_n}{b_n}\leq c_2$; denote $a_n=O(b_n)$ if $\frac{a_n}{b_n}\leq c_2$; $a_n=o(b_n)$ or $b_n=\omega(a_n)$ if $\lim_{n\rightarrow\infty}\frac{a_n}{b_n}=0$. Let $X_n$ be a sequence of random variables. $X_n=O_P(a_n)$ means $\frac{X_n}{a_n}$ is bounded in probability.  $X_n=o_P(a_n)$ means $\frac{X_n}{a_n}$ converges to zero in probability. The notation $\sum_{i\neq j\neq k\neq l}$ represents summation over indices $i,j,k,l\in\{1,2,3,\dots,n\}$ with $i\neq j,i\neq k, i\neq l, j\neq k, j\neq l, k\neq l$. $I[E]$ is the indicator function of event $E$. $E^c$ represents the complement of event $E$. For a set $B$, $|B|$ denote the number of elements in the set $B$. For a function $g(x)$, $g^{(k)}(x)$ denotes the $k$-th derivative of $g(x)$.  We also use $f'(x)$, $f''(x)$ and $f'''(x)$ denote the first, second, and third derivatives of \(f(x)\), respectively.

\section{Preliminaries}\label{pre}

Let $\mathcal{G}=(\mathcal{V},\mathcal{E})$  be an undirected graph of size $n$, with a vertex set \(\mathcal{V}=\{1,2,\dots ,n\}\) and an edge set \(\mathcal{E}\subseteq \{\{i,j\}:i,j\in \mathcal{V},i\ne j\}\).  The adjacency matrix \(A\) is defined such that \(A_{ij}=1\) if \(\{i,j\}\in \mathcal{E}\), and \(A_{ij}=0\) otherwise. The degree of node $i$ is the the number of edges adjacent to it, that is, $d_i=\sum_{j}A_{ij}$.

The friendship paradox is a summary statistic of graph. It measures the strength of the paradox that on average the degrees of the neighbours of a node 
are larger than the degree of the node itself \cite{F91,CKN21}. Next we introduce the friendship paradox presented in \cite{CKN21}.

 \begin{Definition}\label{def0}
 Let $\mathcal{G}=(\mathcal{V},\mathcal{E})$ be a graph. The friendship index of node $i$ in $\mathcal{G}$ is defined as
 \[\Delta_i=\frac{1}{d_i}\sum_{j\in[n]\setminus \{i\}}A_{ij}d_j-d_i,\]
 where $\Delta_i=0$ if $d_i=0$. The friendship paradox of $\mathcal{G}$ is defined as 
\begin{equation}\label{randic}
\mathcal{F}_{n}=\frac{1}{n}\sum_{i=1}^n\Delta_i.
\end{equation}
 \end{Definition}
The friendship paradox  is zero if and only if   the graph is  regular ($d_i=d_j$ for all pairs
of neighboring nodes).  In all other cases, this value is strictly greater than zero \cite{CKN21}. In addition, the maximum of $\Delta_i$ is $n-2$.

In a random graph model, the presence of an edge between any two nodes is governed by a probability distribution.  The most widely studied version is the Erd\H{o}s-R\'{e}nyi graph, where every potential edge is included independently with probability \(p\). One notable variation—the random geometric graph—is often used to model complex networks that exhibit geometry and dependence structures \cite{DC23,GMPS23}.

\begin{Definition}\label{def1}
Let $r_n\in[0,0.5]$ be a real number and $f(x)$ be a probability density function on $[0,1]$. Given independent random variables $X_1,X_2,\dots,X_n$ distributed according to $f(x)$, the Random Geometric Graph (RGG) $\mathcal{G}_{n}(f,r_n)$ is defined as
\[A_{ij}=I[d(X_i,X_j)\leq r_n],\]
 where $A_{ii}=0$ and  $d(X_i,X_j)=\min\{|X_{i}-X_{j}|,1-|X_{i}-X_{j}|\}$.
\end{Definition}

In this model, \(n\) nodes are randomly distributed on $[0,1]$ according to $f(x)$. Two nodes are connected if their distance is less than $r_n$. This setup allows the graph to represent real-world networks with the spatial geometry and structural dependencies.

When $f(x)$ is the uniform density, we say $\mathcal{G}_{n}(f,r_n)$  is the \textit{uniform random geometric graph}. Conversely, we refer to the model as \textit{nonuniform random geometric graph} when $f(x)$ is non-constant. Current research has largely been restricted to uniform random geometric graphs \cite{HRP08, GRK05, GMPS23, YF25, Y25c, Y23b}. However, nonuniform random geometric graphs have recently gained attention for their ability to fit real networks better \cite{PBGKL23,G21, Y25}.  

In general, RGGs are more analytically complex than Erd\H{o}s-R\'{e}nyi graph due to dependence among edges. The theoretical analysis of nonuniform RGGs is more involved than that of the uniform RGGs, particularly for network statistics, such as the friendship paradox, that are non-polynomial functions of the adjacency matrix.
One of the reasons for this complexity is that while edges in RGGs are generally dependent, those incident to a fixed node in a uniform RGG are independent \cite{Y23b,YF25}. However, this property does not hold in the nonuniform case.  For example, the edges $A_{12}$, $A_{13}$ and $A_{23}$ are correlated in both uniform and nonuniform RGGs due to the triangle inequality and the shared spatial proximity of the nodes. However, a critical distinction arises:  the edges \(A_{12}\) and \(A_{13}\) in uniform RGGs are independent \cite{Y23b,YF25}, but they are dependent in nonuniform RGGs.

The following assumptions are fundamental to the derivation of our main results.

\begin{Assumption}\label{assumptiona}
 Let $A$ be sampled from $\mathcal{G}_{n}(f,r_n)$. Suppose $r_n=o(1)$, $nr_n=\omega(1)$, and $f(x)=g(x)I[0\leq x\leq 1]$ with \(g(x)\)  satisfying \(g(x+1)=g(x)=g(x-1)\) for all \(x\in \mathbb{R}\).  In addition, we assume $g(x)$ is bounded away from zero with a bounded fourth derivative. 
\end{Assumption}

Before proceeding, we briefly discuss Assumption \ref{assumptiona}, which underpin our main results. By Lemma \ref{propmain} (stated below), the expected degree of a node is of order \(nr_{n}\). The conditions  $r_n=o(1)$ and $nr_n=\omega(1)$ in Assumption \ref{assumptiona} imply that the network is neither too dense nor too sparse. Such assumptions are standard in the context of network data analysis \cite{A17,GMPS23}.

The condition \(f(x)=g(x)I[0\le x\le 1]\) with \(g\) satisfying \(g(x+1)=g(x)=g(x-1)\) for all \(x\in \mathbb{R}\), characterizes \(f(x)\) as a probability density function on a circle with circumference 1. This assumption is essential for obtaining a concise expression for the leading terms in (\ref{prope0})--(\ref{2prope3}) of Lemma \ref{propmain}. Were this assumption to be violated, the resulting expressions for the leading terms would be prohibitively complex.

The assumption that \(g(x)\) has a bounded fourth derivative is a technical requirement. It allows us to use the Taylor expansion to ensure the remainders of the conditional expectations in (\ref{prope0})--(\ref{2prope3}) of Lemma \ref{propmain} are \(O(r_{n}^{5})\) or \(O(r_{n}^{6})\), and the remainder terms are independent of \(X_{1}\).

The assumption that \(g(x)\) is bounded away from zero ensures the leading terms of the conditional expectations in Lemma \ref{propmain} do not vanish. This assumption is essential for tractability.  For instance, it allows us to approximate the denominator \(d_{i}\) of $\Delta_i$ by the leading term of its conditional expectation—\(2(n-1)r_{n}f(X_{i})\), as derived in (\ref{prope0})—and ensures that this term remains non-vanishing.

The conditions imposed on the probability density \(f(x)\) in Assumption \ref{assumptiona} are mild and easily satisfied; for instance, they hold for both the uniform density and the widely used von Mises density. We provide a more detailed discussion of these cases in Section \ref{wllnfp}.

In the following, we provide a series of lemmas that serve as the foundation for establishing  the weak law of large numbers. Additionally, these results motivate Assumption \ref{assumptiona}. As the proofs of the lemmas are quite lengthy, they are deferred to Section \ref{fplemprf}. The first lemma provides asymptotic expressions for the conditional probabilities of  occurrence for subgraphs such as edges and triangles. These quantities are necessary to evaluate the expectation and variance of the friendship paradox.

For convenience, we define several notations. Let $\mu_i = \mathbb{E}[d_i \mid X_i]$. 
Given an index $t\in\{1,2\}$, define $\bar{A}_{tl}=A_{tl}-\mathbb{E}[A_{tl}|X_t]$, $P_t = \sum_{\substack{j \ne k\neq t } }T_{tjk}$, $Q_t=\sum_{j\ne k \ne t}S_{tjk}$, $R_t=\sum_{j \ne k \ne l \ne t}
    T_{tjk}\bar{A}_{tl}$, where $T_{tjk}=A_{tj}A_{jk}
    -A_{tj}A_{tk}$ and $S_{tjk}=A_{tj}A_{tk}
    -A_{tj}A_{jk}A_{kt}$.

\begin{Lemma}
    \label{propmain}
    Under Assumption \ref{assumptiona}, we have
\begin{align}\label{prope0}
    \mathbb{E}[A_{12}|X_1]&=2r_nf(X_1) + \frac{f''(X_1)}{3} r_n^3 + O(r_n^5),\\     \label{prope1}
   \mathbb{E}[A_{12}A_{13}|X_1]& = 4r_n^2f^2(X_1) + \frac{4r_n^4 }{3} f(X_1) f''(X_1) + O(r_n^6),\\
   \label{prope01}
   \mathbb{E}[A_{12}A_{23}|X_1]&=4r_n^2f^2(X_1) +\frac{r_n^4}{3}\big[4(f^{\prime}(X_1))^2+6f(X_1)f^{\prime\prime}(X_1)\big]+ O(r_n^6),\\
\label{prope3}
    \mathbb{E}[A_{12}A_{13}A_{23}|X_1]&=3r_n^2f^2(X_1)+ \frac{5 r_n^4}{12} \left[(f'(X_1))^2 + 2f(X_1) f''(X_1)\right] + O(r_n^6),\\ \label{1prope3}
    \mathbb{E}[A_{12}A_{23}A_{24}|X_1]&=\mathbb{E}[A_{12}A_{13}A_{34}|X_1]=\mathbb{E}[A_{12}A_{13}A_{14}|X_1]=8r_n^3f^3(X_1)+ O(r_n^5),\\ \label{2prope3}
    \mathbb{E}[A_{12}A_{13}A_{23}A_{34}|X_1]&=\mathbb{E}[A_{12}A_{13}A_{23}A_{14}|X_1]=6r_n^3f^3(X_1)+ O(r_n^5).
\end{align}
Here, the remainder terms $O(r_n^5)$ and $O(r_n^6)$ in (\ref{prope0})-(\ref{2prope3}) do not depend on $X_1$.
\end{Lemma}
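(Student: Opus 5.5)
Every identity in Lemma \ref{propmain} comes from writing the conditional expectation as an integral over the positions of the other nodes and Taylor-expanding the periodic extension $g$ around $X_1=x$. Since $g$ is $1$-periodic and $r_n\le 1/2$, the event $d(x,X_2)\le r_n$ is exactly $X_2\in x+[-r_n,r_n]$ modulo $1$. Periodicity therefore lets us write
\[
\mathbb{E}[A_{12}\mid X_1=x]=\int_{-r_n}^{r_n} g(x+u)\,du .
\]
There are no boundary effects, and $f(x)=g(x)$ for $x\in[0,1]$. Expanding $g(x+u)=g(x)+ug'(x)+\tfrac{u^2}{2}g''(x)+\tfrac{u^3}{6}g'''(x)+\tfrac{u^4}{24}g^{(4)}(\xi)$ and integrating, the odd moments vanish by symmetry. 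This leaves $2r_ng+\tfrac{r_n^3}{3}g''$ plus a remainder bounded by $\|g^{(4)}\|_\infty r_n^5/60$. That bound is uniform in $x$, which gives (\ref{prope0}) with an $X_1$-free remainder.

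\textbf{Two-edge terms.} For (\ref{prope1}), conditional independence of $X_2,X_3$ given $X_1$ gives the square of (\ref{prope0}). Expanding $(2r_nf+\tfrac{r_n^3}{3}f''+O(r_n^5))^2$ yields $4r_n^2f^2+\tfrac43 r_n^4ff''+O(r_n^6)$, using boundedness of $f,f''$. For (\ref{prope01}), condition first on $X_2$. Then
\[
\mathbb{E}[A_{12}A_{23}\mid X_1=x]=\int_{-r_n}^{r_n} g(x+u)\,h(x+u)\,du,
\]
where $h(y)=2r_ng(y)+\tfrac{r_n^3}{3}g''(y)+O(r_n^5)$. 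Expanding $g(x+u)h(x+u)$ to second order in $u$ and using $\int_{-r_n}^{r_n}u^2\,du=\tfrac23 r_n^3$ gives
\[
4r_n^2g^2+2r_n\cdot\tfrac{2r_n^3}{3}\cdot\tfrac12 (g^2)''+\tfrac{2r_n^4}{3}gg''.
\]
Since $(g^2)''=2g'^2+2gg''$, the total $r_n^4$ coefficient is $\tfrac13[4g'^2+6gg'']$, as claimed.

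\textbf{Triangle term (main obstacle).} For (\ref{prope3}), write $X_2=x+u$ and $X_3=x+v$ with $u,v\in[-r_n,r_n]$. The condition $d(X_2,X_3)\le r_n$ becomes $|u-v|\le r_n$, because $|u-v|\le 2r_n<1/2$ for $n$ large, so no wrap-around occurs. The integral is $\iint_{D} g(x+u)g(x+v)\,du\,dv$, where $D$ is the hexagon $\{|u|,|v|,|u-v|\le r_n\}$ of area $3r_n^2$. Expand $g(x+u)g(x+v)$ to second order. The linear terms vanish because $D$ is symmetric under $(u,v)\mapsto(-u,-v)$, and cubic terms vanish by the same symmetry, so the quartic Taylor remainder gives $O(r_n^6)$ uniformly. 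The second-order term is $\tfrac12 gg''\,(u^2+v^2)+g'^2\,uv$. One must compute $\iint_D u^2=\iint_D v^2$ and $\iint_D uv$ over the hexagon. These come out to $\tfrac56 r_n^4$ and $\tfrac{5}{12}r_n^4$ respectively, and they yield $\tfrac{5r_n^4}{12}[g'^2+2gg'']$. This area-moment computation is the step requiring care: I would split $D$ into the regions $u,v$ of the same sign and of opposite sign, and I expect it to be the main place for arithmetic errors.

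\textbf{Three- and four-node terms.} For (\ref{1prope3}) and (\ref{2prope3}) only the leading term is needed, with an $O(r_n^5)$ remainder. Each integral is over a region of volume $\Theta(r_n^3)$ in the relative coordinates, and by symmetry the first-order Taylor terms integrate to zero. Hence replacing every $g$ by $g(x)$ costs only $O(r_n^2)\cdot r_n^3$. The leading constants are the volumes of the regions:
\begin{itemize}[label={}]
\item $(2r_n)^3=8r_n^3$ for the star $A_{12}A_{13}A_{14}$, and for the paths $A_{12}A_{23}A_{24}$ and $A_{12}A_{13}A_{34}$, each of which is a tree whose edge constraints can be integrated successively;
\item $3r_n^2\cdot 2r_n=6r_n^3$ for a triangle with a pendant edge, i.e.\ $A_{12}A_{13}A_{23}A_{34}$ and $A_{12}A_{13}A_{23}A_{14}$, where the pendant vertex is integrated out first.
\end{itemize}
For the trees, the first-order terms vanish because the constraint region is symmetric under reflecting all displacements.
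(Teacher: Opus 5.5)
Your proof is correct. It uses the same idea as the paper: Taylor-expand the density around $X_1$ over arcs of half-length $r_n$, and use periodicity of $g$ to remove boundary effects. It is, however, organized differently and is more self-contained.

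\textbf{What the paper does.} The paper proves only (\ref{1prope3}) and (\ref{2prope3}) itself; it defers (\ref{prope0})--(\ref{prope3}) to an outside reference. It works by iterated conditioning, for example reducing $\mathbb{E}[A_{12}A_{23}A_{24}|X_1]$ to $4r_n^2\mathbb{E}[A_{12}f^2(X_2)|X_1]$ via (\ref{prope1}). It handles the circle by splitting into the cases $X_1\in[r_n,1-r_n]$, $[0,r_n)$, $(1-r_n,1]$ and reassembling the boundary pieces with $g(x\pm 1)=g(x)$. For the triangle with a pendant edge, it expands the two halves of the integral by hand and observes that the $\pm\frac76 f^2 f' r_n^3$ terms cancel.

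\textbf{What your route buys.} Your displacement coordinates absorb the three-case split in one step. Central symmetry of each constraint region (the cube, the hexagon $D$, and $D$ times an interval) kills all odd-order Taylor terms at once. The leading constants then reduce to volumes, and the $r_n^4$ coefficient in (\ref{prope3}) reduces to the hexagon moments. You have these right: $\iint_D u^2=\frac56 r_n^4$ and $\iint_D uv=\frac5{12} r_n^4$. This route is also less error-prone. The paper's intermediate displays for the pendant-triangle term read $6r_n^3 f^2(X_1)$, whereas your volume count $3r_n^2\cdot 2r_n$ gives the stated $6r_n^3 f^3(X_1)$ directly.

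\textbf{Cosmetic points.} $A_{12}A_{23}A_{24}$ is a star centred at node $2$, not a path; it is still a tree, so your argument is unaffected. In (\ref{prope01}), split off the $O(r_n^5)$ part of $h$ before Taylor-expanding, since that part need not be smooth. After integrating over an arc of length $2r_n$ it contributes only $O(r_n^6)$.
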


Lemma \ref{propmain} is central to approximating the expectation of \(\Delta _{i}\) and the variance of \(\mathcal{F}_{n}\). More specifically, it provides the necessary tools to approximate the conditional expectations \(\mathbb{E}[\Delta _{1}|X_{1}]\), \(\mathbb{E}[\Delta _{1}^{2}|X_{1}]\), and \(\mathbb{E}[\Delta _{1}\Delta _{2}|X_{1},X_{2}]\).

The next lemma provides an upper bound on the probability that a node has a lower-than-expected degree. Under Assumption \ref{assumptiona} and Lemma \ref{propmain}, the result follows directly from the Chernoff bound for the binomial distribution.
\begin{Lemma}\label{lem2}
Let $\delta\in (0,1)$ be a fixed constant and $f(x)\geq \lambda>0$ for a constant $\lambda$.
   Under Assumption \ref{assumptiona}, we have
\[\mathbb{P}(d_1\leq \delta \lambda nr_n)\leq \exp\big(-cnr_n(1+o(1))\big),\]
where $c$ is a positive constant that depends on $\delta$ and $f(x)$.
\end{Lemma}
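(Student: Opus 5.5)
Conditioning on the spatial location $X_1$ turns $d_1$ into a binomial variable, so the plan is a Chernoff lower-tail bound applied conditionally and then averaged over $X_1$. Given $X_1=x$, the indicators $A_{1j}=I[d(x,X_j)\le r_n]$, $j=2,\dots,n$, are functions of the independent variables $X_2,\dots,X_n$ only. Hence they are i.i.d. Bernoulli with success probability $p_n(x)=\mathbb{E}[A_{12}\mid X_1=x]$, and
\[
d_1\mid X_1=x\ \sim\ \mathrm{Bin}\big(n-1,p_n(x)\big).
\]
By (\ref{prope0}) of Lemma \ref{propmain}, together with boundedness of $f''$ (implied by Assumption \ref{assumptiona}), we get
\[
p_n(x)=2r_nf(x)+O(r_n^3)\ \ge\ 2\lambda r_n\,(1+o(1)),
\]
uniformly in $x$, because the remainder does not depend on $X_1$ and $r_n=o(1)$. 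The same bound also follows directly from $p_n(x)=\int_{x-r_n}^{x+r_n}g(y)\,dy\ge 2\lambda r_n$.

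The conditional mean is then $\mu(x)=(n-1)p_n(x)\ge 2\lambda nr_n(1+o(1))$ uniformly in $x$. The threshold $\delta\lambda nr_n$ is therefore at most $(1-\eta)\mu(x)$ with $\eta=1-\delta/2+o(1)$, so $\eta$ is bounded away from $0$. Applying the standard Chernoff bound $\mathbb{P}(\mathrm{Bin}\le(1-\eta)\mu)\le\exp(-\eta^2\mu/2)$ gives
\[
\mathbb{P}(d_1\le\delta\lambda nr_n\mid X_1=x)\le\exp\!\Big(-\tfrac{(1-\delta/2)^2}{2}\,2\lambda nr_n(1+o(1))\Big).
\]
This bound is uniform in $x$. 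Taking expectation over $X_1$ yields the claim with $c=\lambda(1-\delta/2)^2$, which depends only on $\delta$ and on $f$ through $\lambda$.

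The only delicate point is uniformity in $x$: the Chernoff exponent must be bounded below independently of $X_1$ so that averaging over $X_1$ costs nothing. This is exactly what the $X_1$-free remainder in Lemma \ref{propmain}, combined with $f\ge\lambda$, provides. The monotonicity of the Chernoff bound in $\mu$ handles the fact that the threshold is only compared with a lower bound on $\mu(x)$. If one prefers the sharper relative-entropy form of the Chernoff bound, the same argument goes through with a larger constant $c$.
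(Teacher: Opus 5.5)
Your proof is correct and is essentially the paper's argument. Condition on $X_1$ so that $d_1\sim\mathrm{Bin}(n-1,\mathbb{E}[A_{12}\mid X_1])$, bound the conditional mean below by a multiple of $\lambda nr_n$ uniformly in $X_1$ via (\ref{prope0}) and $f\ge\lambda$, and apply the multiplicative Chernoff lower-tail bound; the differences are cosmetic. The paper compares the threshold with $\delta\mu_1$ and gets $c=(1-\delta)^2\lambda$ (its final displayed exponent drops the factor $n$, a typo), whereas you compare with roughly $(\delta/2)\mu(x)$, obtain the slightly better $c=\lambda(1-\delta/2)^2$, and make the uniform-in-$X_1$ averaging step explicit.
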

 
The following lemma provides a bound on the higher-order conditional central moments of node degree \(d_{1}\).
\begin{Lemma}\label{lnem3}
   Under Assumption \ref{assumptiona}, we have
   \begin{align*}
    \mathbb{E}\left[(d_1-\mu_1)^8\right]=O(n^4r_n^4).
\end{align*} 
\end{Lemma}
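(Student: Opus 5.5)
Conditionally on $X_1$, the degree is a sum of i.i.d. indicators, and the plan is to exploit exactly that. Given $X_1$, the variables $X_2,\dots,X_n$ are i.i.d. with density $f$ and independent of $X_1$. Hence
\[
d_1=\sum_{j=2}^n A_{1j}=\sum_{j=2}^n I[d(X_1,X_j)\le r_n]
\]
is a sum of $n-1$ conditionally i.i.d. Bernoulli variables with success probability $p(X_1)=\mathbb{E}[A_{12}\mid X_1]$. In other words, conditionally on $X_1$, $d_1\sim\mathrm{Bin}(n-1,p(X_1))$ and $\mu_1=(n-1)p(X_1)$. By (\ref{prope0}) of Lemma \ref{propmain}, together with the boundedness of $f$ and $f''$ (which follows from Assumption \ref{assumptiona}, since $g$ is periodic with bounded fourth derivative), we have $p(X_1)\le C r_n$ for a constant $C$ that does not depend on $X_1$.

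The main step is a bound on the eighth central moment of a binomial. Write $d_1-\mu_1=\sum_{j=2}^n Y_j$, where $Y_j=A_{1j}-p(X_1)$. Conditionally on $X_1$ the $Y_j$ are independent, centred, bounded by $1$ in absolute value, and satisfy $\mathbb{E}[|Y_j|^m\mid X_1]\le \mathbb{E}[Y_j^2\mid X_1]\le p(X_1)$ for every $m\ge 2$.

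Expanding $\mathbb{E}[(\sum_j Y_j)^8\mid X_1]$ into products of the $Y_j$, every term in which some index appears exactly once vanishes. The surviving terms are indexed by partitions of the $8$ positions into blocks, each of size at least $2$, with distinct indices attached to distinct blocks. A partition with $b$ blocks ($1\le b\le 4$) contributes at most $n^b p(X_1)^b$. This gives
\[
\mathbb{E}[(d_1-\mu_1)^8\mid X_1]\le K\sum_{b=1}^4 (n p(X_1))^b\le K'\sum_{b=1}^4 (C n r_n)^b,
\]
where $K$ and $K'$ are absolute combinatorial constants. Since $nr_n=\omega(1)$, the term $b=4$ dominates, and the bound is $O(n^4r_n^4)$ uniformly in $X_1$.

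Taking expectations over $X_1$ gives the claim. As an alternative route, one could invoke Rosenthal's inequality conditionally on $X_1$:
\[
\mathbb{E}\Bigl[\bigl(\textstyle\sum_j Y_j\bigr)^8\Bigm| X_1\Bigr]\lesssim \sum_j \mathbb{E}[Y_j^8\mid X_1]+\Bigl(\textstyle\sum_j \mathbb{E}[Y_j^2\mid X_1]\Bigr)^4\lesssim nr_n+(nr_n)^4 .
\]

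The only real point of care is the first step, namely justifying the conditional independence of the edges $A_{1j}$ given $X_1$. This holds because each $A_{1j}$ depends only on $(X_1,X_j)$. It is worth stressing that no independence of edges across different pairs beyond this is needed. The remaining issue is keeping the constant uniform in $X_1$, which is handled by the uniform bound $p(X_1)\le Cr_n$.
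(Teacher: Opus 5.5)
Your proposal is correct and follows essentially the same route as the paper: condition on $X_1$, expand the eighth power, discard every term in which some index appears only once (by conditional independence of the $A_{1j}$ given $X_1$ and centering), and bound each surviving term by $O(r_n)$ per distinct index, which gives $O(n^4r_n^4)$ because $nr_n=\omega(1)$. Your block-partition bookkeeping and the explicit uniform bound $p(X_1)\le Cr_n$ make the paper's somewhat terse case enumeration a little more precise.
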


The following technical lemmas are required to approximate the mean and variance of the friendship paradox index.

\begin{Lemma}\label{lem1}
   Under Assumption \ref{assumptiona}, we have
\begin{align}\label{fieq1}
    \mathbb{E}[
    P_1^2]&= O\big(n^4r_n^8+n^3r_n^3+n^2r_n^2\big),\\ \label{fieq2}
    \mathbb{E}[P_1^4]&=O(n^8r_n^{16}+n^7r_n^8+n^6r_n^6).
\end{align}
\end{Lemma}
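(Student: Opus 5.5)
We need bounds on $\mathbb{E}[P_1^2]$ and $\mathbb{E}[P_1^4]$, where $P_1=\sum_{j\neq k\neq 1}(A_{1j}A_{jk}-A_{1j}A_{1k})$. Note that $P_1=\sum_j A_{1j}(d_j-1)-d_1(d_1-1)$ up to boundary terms in $k$, so $P_1$ measures the fluctuation of neighbours' degrees around the node's own degree. The plan is to condition on $X_1$ and split $P_1$ into its conditional mean and a centred part:
\[
P_1=\mathbb{E}[P_1\mid X_1]+(P_1-\mathbb{E}[P_1\mid X_1]).
\]
The two pieces are then bounded separately, using $\mathbb{E}[P_1^{2m}]\le 2^{2m-1}\big(\mathbb{E}[\mathbb{E}[P_1|X_1]^{2m}]+\mathbb{E}[(P_1-\mathbb{E}[P_1|X_1])^{2m}]\big)$ for $m=1,2$.

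\textbf{Conditional mean.} By (\ref{prope1}) and (\ref{prope01}) of Lemma \ref{propmain},
\[
\mathbb{E}[P_1\mid X_1]=(n-1)(n-2)\big(\mathbb{E}[A_{12}A_{23}|X_1]-\mathbb{E}[A_{12}A_{13}|X_1]\big).
\]
The leading $4r_n^2f^2$ terms cancel. The difference equals $\frac{r_n^4}{3}\big[4f'^2+2ff''\big]+O(r_n^6)$, uniformly bounded by $Cr_n^4$ since $f$ has bounded derivatives. Hence $|\mathbb{E}[P_1|X_1]|=O(n^2r_n^4)$ uniformly. This gives the $n^4r_n^8$ and $n^8r_n^{16}$ terms.

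\textbf{Centred part, second moment.} Write $P_1-\mathbb{E}[P_1|X_1]=\sum_{j\ne k}\bar T_{1jk}$ with $\bar T_{1jk}=T_{1jk}-\mathbb{E}[T_{1jk}|X_1]$. Conditionally on $X_1$, the variables $X_2,\dots,X_n$ are i.i.d., so $\bar T_{1jk}$ and $\bar T_{1j'k'}$ are conditionally independent, with covariance zero, when $\{j,k\}\cap\{j',k'\}=\emptyset$. The surviving pairs share one or two indices.

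If they share two indices there are $O(n^2)$ pairs, each of size at most $\mathbb{E}[A_{1j}A_{jk}+A_{1j}A_{1k}]=O(r_n^2)$. This gives $O(n^2r_n^2)$.

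If they share exactly one index there are $O(n^3)$ pairs. Each expectation is a product of indicators on a connected three-edge structure through node $1$, like those in (\ref{1prope3}) and (\ref{2prope3}), and is $O(r_n^3)$. This gives $O(n^3r_n^3)$. No finer cancellation is needed here, since the target bound already contains $n^3r_n^3$.

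\textbf{Fourth moment and main obstacle.} This step is the main obstacle. Expanding $\mathbb{E}[(\sum\bar T_{1jk})^4\mid X_1]$ gives a sum over quadruples of index pairs. A term vanishes if some pair is index-disjoint from the other three, by conditional independence and centring. Each surviving configuration is classified by the number $v$ of distinct indices among $\{2,\dots,n\}$ and by the connectivity of the union of the four pairs. Since every surviving pair must intersect another, $v\le 6$.

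Because each product of indicators forms a connected geometric structure containing node $1$ (every $T_{1jk}$ contains $A_{1j}$), its expectation is $O(r_n^{v})$. Centring subtracts products of such terms, which obey the same bound. So a configuration with $v$ free indices contributes $O(n^vr_n^v)$.

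The cases are then:
\begin{itemize}[leftmargin=*]
\item $v=6$: the pairs must split into two linked couples. The expectation then factorises into two second-moment blocks, giving $O((n^3r_n^3)^2)=O(n^6r_n^6)$.
\item $v=7$ and $v=8$: these are impossible for a nonvanishing term.
\end{itemize}

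However, the target contains $n^7r_n^8$. I would therefore also track the mixed terms that arise when centring is done only around $\mathbb{E}[T|X_1]$ and the means are not small. Products of couples where one factor is a conditional mean of size $n^2r_n^4$ and the rest a centred three-index block give contributions like $n^2r_n^4\cdot n^3r_n^3\cdot(\ldots)$. These are bounded by $n^7r_n^8$ or absorbed into the listed terms by AM-GM.

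The delicate part is carefully enumerating the shared-index patterns and confirming that each indicator product is bounded by $r_n$ raised to the number of distinct non-root vertices. I would prove that general fact once: for a connected graph on root $1$ plus $v$ vertices, integrate vertices out leaf-to-root along a spanning tree, using $f\le C$ and the window length $2r_n$. All cases then follow at once.

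Taking expectations over $X_1$ and combining the pieces yields (\ref{fieq1}) and (\ref{fieq2}).
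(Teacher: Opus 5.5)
Your proposal is correct. For the second moment it matches the paper: the paper's index-disjoint block $(n-1)\cdots(n-4)\,\mathbb{E}[\mathbb{E}[T_{123}|X_1]\mathbb{E}[T_{145}|X_1]]$ is exactly your squared-conditional-mean term, and the one- and two-overlap cases are bounded the same way.

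The fourth moment is where you genuinely depart from the paper. The paper expands the raw moment $\mathbb{E}[P_1^4]$ by the number $s$ of distinct indices among the eight slots:
\begin{itemize}
\item for $s=8$ it factorises into four conditional means;
\item for $s=7$ it factors out the two isolated pairs as conditional means, each $O(r_n^4)$, and bounds the remaining linked couple crudely by $|T_{1j_3k_3}T_{1j_4k_4}|\le 4$;
\item for $s\le 6$ it uses the spanning-tree bound $O(r_n^s)$.
\end{itemize}
The $n^7r_n^8$ term in (\ref{fieq2}) is an artifact of that crude $s=7$ bound.

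By centring at $\mathbb{E}[P_1\mid X_1]$ and using $(a+b)^4\le 8(a^4+b^4)$, you kill every configuration containing an isolated pair. The fluctuation part then only sees $v\le 6$ and is $O(n^6r_n^6)$. The mean part gives $O(n^8r_n^{16})$ from the uniform bound $|\mathbb{E}[P_1|X_1]|=O(n^2r_n^4)$. You therefore obtain $\mathbb{E}[P_1^4]=O(n^8r_n^{16}+n^6r_n^6)$. This implies (\ref{fieq2}) and is strictly sharper in some regimes: for $r_n=n^{-1/5}$, the term $n^7r_n^8=n^{27/5}$ dominates $n^8r_n^{16}=n^6r_n^6=n^{24/5}$.

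The cost of centring is that you must check that the products of conditional means it generates obey the same $O(r_n^v)$ bound. You assert this without justification, but it holds: a block of pairs with $v_g$ distinct indices contributes $O(r_n^{v_g})$, and $\sum_g v_g\ge v$.

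You should drop your final paragraph about ``mixed terms'' and recovering $n^7r_n^8$. With the $2^{2m-1}$ inequality there are no cross terms, and an upper bound need not reproduce every term of the target.
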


\begin{Lemma}\label{lem3}
   Under Assumption \ref{assumptiona}, we have
   \begin{align}\label{fieq6}
    \mathbb{E}\left[\left| \frac{P_1(d_1-\mu_1)^2}{d_1\mu_1^2} \right|I[d_1\geq1]\right]
    =o(1),
\end{align} 
   \begin{align}\label{fieq7}
    \mathbb{E}\left[\left| \frac{P_1^2(d_1-\mu_1)^4}{d_1^2\mu_1^4} \right|I[d_1\geq1]\right]
    =o(1).
\end{align} 
\end{Lemma}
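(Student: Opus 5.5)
The plan is to split the expectation according to whether $d_1$ is small, and on the complementary event to handle the three factors with H\"older's inequality. Fix $\delta\in(0,1)$ and let $\lambda=\inf f>0$, which exists by Assumption \ref{assumptiona}. By (\ref{prope0}), $\mu_1=(n-1)\mathbb{E}[A_{12}|X_1]=2(n-1)r_nf(X_1)+O(nr_n^3)$. Hence $\mu_1\geq c\,nr_n$ deterministically for large $n$, with $c>0$ depending only on $\lambda$. I write $I[d_1\geq1]=I[1\le d_1\le\delta\lambda nr_n]+I[d_1>\delta\lambda nr_n]$ and treat the two pieces separately.

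\emph{Low-degree piece.} When $1\le d_1\le\delta\lambda nr_n$, the crude bounds are $|P_1|\le \sum_{j\ne k\ne 1}(A_{1j}A_{jk}+A_{1j}A_{1k})\le 2n d_1$ and $|d_1-\mu_1|\le \mu_1=O(nr_n)$. So the integrand in (\ref{fieq6}) is at most $2n\,O(n^2r_n^2)/\mu_1^2=O(n)$. Similarly, the integrand in (\ref{fieq7}) is at most $(2nd_1)^2\mu_1^4/(d_1^2\mu_1^4)=O(n^2)$. Lemma \ref{lem2} gives $\mathbb{P}(d_1\le\delta\lambda nr_n)\le e^{-cnr_n(1+o(1))}$. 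Multiplying, this piece contributes $O(n^2e^{-cnr_n(1+o(1))})$. This is $o(1)$ provided $nr_n\gg\log n$. Here is a subtlety: Assumption \ref{assumptiona} only gives $nr_n=\omega(1)$. If that is insufficient, I would refine the bound using $\mathbb{E}[P_1^2I[\cdot]]$ via Cauchy--Schwarz with Lemma \ref{lem1}, i.e. $\sqrt{\mathbb{E}P_1^2}\sqrt{\mathbb{P}(\cdot)}$ times $O(1)$. This yields $O(\sqrt{n^4r_n^8+n^3r_n^3+n^2r_n^2})\,e^{-cnr_n/2}=O((nr_n)^{3/2}+n^2r_n^4)e^{-cnr_n/2}$, since the factor $(d_1-\mu_1)^2/(d_1\mu_1^2)$ is $O(1)$ when $d_1\ge1$. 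The term $(nr_n)^{3/2}e^{-cnr_n/2}$ is $o(1)$. The term $n^2r_n^4e^{-cnr_n/2}$ is also $o(1)$, since $n^2r_n^4=(nr_n)^2r_n^2$ and $r_n=o(1)$. So I would use this Cauchy--Schwarz version directly. For (\ref{fieq7}) I use $\mathbb{E}[P_1^4]$ from (\ref{fieq2}) in the same way: $\sqrt{\mathbb{E}P_1^4}=O(n^4r_n^8+n^{7/2}r_n^4+(nr_n)^3)$ multiplied by $e^{-cnr_n/2}$. The worrying term $n^{7/2}r_n^4$ equals $(nr_n)^{7/2}r_n^{1/2}$, so all terms are $o(1)$.

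\emph{High-degree piece.} When $d_1>\delta\lambda nr_n$, we have $d_1\mu_1^2\gtrsim n^3r_n^3$. By Cauchy--Schwarz, the expectation in (\ref{fieq6}) is at most $(n r_n)^{-3}\sqrt{\mathbb{E}P_1^2}\sqrt{\mathbb{E}(d_1-\mu_1)^4}$. Lemma \ref{lnem3} together with Jensen gives $\mathbb{E}(d_1-\mu_1)^4=O(n^2r_n^2)$. The bound becomes $O\big((nr_n)^{-2}(n^2r_n^4+(nr_n)^{3/2})\big)=O(r_n^2+(nr_n)^{-1/2})=o(1)$. For (\ref{fieq7}), the denominator is $\gtrsim(nr_n)^6$. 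Cauchy--Schwarz with (\ref{fieq2}) and Lemma \ref{lnem3} gives $(nr_n)^{-6}\,O(n^4r_n^8+n^{7/2}r_n^4+(nr_n)^3)\,O(n^2r_n^2)$. This equals $O(r_n^4+(nr_n)^{-1/2}r_n^{1/2}+(nr_n)^{-1})=o(1)$.

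\emph{Main obstacle.} I expect the hardest point to be the low-degree region, because Lemma \ref{lem2} gives only an $e^{-cnr_n}$ tail while Assumption \ref{assumptiona} allows $nr_n$ to grow arbitrarily slowly. This is why I would avoid crude deterministic bounds there. Instead I would use the moment bounds of Lemma \ref{lem1} through Cauchy--Schwarz, together with the fact that $(d_1-\mu_1)^2/d_1\le\mu_1^2$ when $d_1\ge1$, so that only polynomial factors in $nr_n$ multiply the exponential tail.
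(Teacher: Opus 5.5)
Your argument is correct, but it is organized differently from the paper's. The paper first deduces (\ref{fieq6}) from (\ref{fieq7}) by Cauchy--Schwarz. It then handles (\ref{fieq7}) with a single Cauchy--Schwarz step that splits the powers of $\mu_1$ unevenly, bounding the expectation by $\sqrt{\mathbb{E}[P_1^4/\mu_1^7]\cdot\mathbb{E}[(d_1-\mu_1)^8d_1^{-4}\mu_1^{-1}I[d_1\ge1]]}$. The first factor is $O(nr_n^9+r_n+(nr_n)^{-1})$ by (\ref{fieq2}). The low/high-degree split is applied only to the second factor: on $\{1\le d_1\le\epsilon nr_n\}$ the crude bound $2^8\mu_1^7=O((nr_n)^7)$ is absorbed by the tail $e^{-cnr_n(1+o(1))}$ of Lemma \ref{lem2}, and on the complement Lemma \ref{lnem3} gives $O((nr_n)^{-1})$.

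You instead split the whole expectation first and apply Cauchy--Schwarz on each piece separately. On the low-degree event, where the degree factor is at most $1$, you pair the moments of $P_1$ from Lemma \ref{lem1} with $\mathbb{P}(d_1\le\delta\lambda nr_n)$. On the high-degree event you pair them with the central moments of $d_1$. The inputs are the same, and both routes give $O(r_n^4+n^{-1/2}+(nr_n)^{-1})$ for (\ref{fieq7}). The paper's version is shorter. Yours makes explicit why only polynomial factors in $nr_n$ ever multiply the exponential tail, which is exactly what your abandoned deterministic bound $O(n^2e^{-cnr_n})$ lacked when $nr_n=O(\log n)$.

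Two minor points. First, the separate treatment of (\ref{fieq6}) is unnecessary, since $\mathbb{E}[|Z|I]\le(\mathbb{E}[Z^2I])^{1/2}$ gives it from (\ref{fieq7}). Second, the inequality $(d_1-\mu_1)^2/d_1\le\mu_1^2$ in your last paragraph should be stated on the low-degree event, where $d_1\le\mu_1$. That is the only place you use it, and it need not hold for large $d_1$ (e.g.\ $d_1$ close to $n$) when $nr_n^2\to0$.
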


\begin{Lemma}\label{addlem1}
   Under Assumption \ref{assumptiona}, we have
    \begin{align}\label{9lem5eqpoof7}
\mathbb{E}[Q_1^2|X_1]=n^4r_n^4f^4(X_1)+O(n^3r_n^3).
\end{align}
\end{Lemma}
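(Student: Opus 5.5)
We will prove Lemma \ref{addlem1} by expanding $Q_1^2$ as a double sum and sorting the terms by how many distinct indices appear. Write
\[
Q_1^2=\sum_{j\neq k\neq 1}\ \sum_{l\neq m\neq 1} S_{1jk}S_{1lm},\qquad S_{1jk}=A_{1j}A_{1k}-A_{1j}A_{jk}A_{k1}=A_{1j}A_{1k}(1-A_{jk}).
\]
So $S_{1jk}\in\{0,1\}$ is the indicator that $j,k$ are both neighbours of node $1$ but are not adjacent to each other. We split the double sum according to $|\{j,k\}\cap\{l,m\}|\in\{0,1,2\}$.

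\textbf{Terms with all four indices distinct.} There are $(n-1)(n-2)(n-3)(n-4)=n^4+O(n^3)$ such ordered quadruples. Conditional on $X_1$, the points $X_j,X_k,X_l,X_m$ are i.i.d. Hence $S_{1jk}$ and $S_{1lm}$ are conditionally independent given $X_1$, since each depends only on $X_1$ and its own pair. This gives
\[
\mathbb{E}[S_{1jk}S_{1lm}\mid X_1]=\big(\mathbb{E}[S_{123}\mid X_1]\big)^2 .
\]
By (\ref{prope1}) and (\ref{prope3}),
\[
\mathbb{E}[S_{123}\mid X_1]=4r_n^2f^2(X_1)-3r_n^2f^2(X_1)+O(r_n^4)=r_n^2f^2(X_1)+O(r_n^4),
\]
with a remainder that is uniform in $X_1$. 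This is where the statement that the remainders in Lemma \ref{propmain} do not depend on $X_1$ is used, together with the boundedness of $f$ and $f''$. Squaring and multiplying by the count gives
\[
n^4r_n^4f^4(X_1)+O(n^3r_n^4)+O(n^4r_n^6).
\]

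\textbf{Terms with overlap.} For overlap size $1$ there are $O(n^3)$ index triples. Each term is bounded by a product such as $A_{1j}A_{1k}A_{1m}$, and by (\ref{1prope3}) its conditional expectation is $O(r_n^3)$, uniformly in $X_1$. The total contribution is therefore $O(n^3r_n^3)$. For overlap size $2$ we have $S_{1jk}^2=S_{1jk}\le A_{1j}A_{1k}$. There are $O(n^2)$ such terms, each with conditional expectation $O(r_n^2)$ by (\ref{prope1}), giving $O(n^2r_n^2)$. Since $nr_n=\omega(1)$, we have $n^2r_n^2=O(n^3r_n^3)$.

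\textbf{Remaining error term.} Collecting everything leaves $O(n^4r_n^6)$, which is not automatically $O(n^3r_n^3)$: it is only when $nr_n^3=O(1)$. This is the step we expect to be the main obstacle. The first thing to try is to refine $\mathbb{E}[S_{123}\mid X_1]$ to include the $r_n^4$ term explicitly, using the $r_n^4$ coefficients in (\ref{prope1}) and (\ref{prope3}). The square would then be $r_n^4f^4(X_1)+r_n^6 h(X_1)+O(r_n^8)$ for an explicit $h$. If the lemma is meant to hold in all regimes, we would either carry the $r_n^6$ term along or note that only the order matters in later use. We would check whether the intended statement implicitly absorbs $n^4r_n^6$; for instance, it does in the regimes where it is $o(n^3r_n^3)$. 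Otherwise we would report the bound as $n^4r_n^4f^4(X_1)+O(n^3r_n^3+n^4r_n^6)$. All the other steps are routine counting with Lemma \ref{propmain}.
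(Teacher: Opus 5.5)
Your route is the paper's own. You use the same split by $|\{j,k\}\cap\{j_1,k_1\}|\in\{0,1,2\}$, conditional independence for the disjoint quadruples, and the bounds $\mathbb{E}[A_{1j}A_{1k}A_{1k_1}\mid X_1]=O(r_n^3)$ and $\mathbb{E}[A_{1j}A_{1k}\mid X_1]=O(r_n^2)$ for the overlaps. The obstacle you flag is genuine, and it is a gap in the paper, not in your argument. The paper multiplies ``at most $n^4$'' quadruples by $r_n^4f^4(X_1)+O(r_n^6)$ and then drops the resulting $O(n^4r_n^6)$ when it says ``combining''.

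The refinement you suggest settles the matter negatively. From (\ref{prope1}) and (\ref{prope3}),
\[
\mathbb{E}[S_{123}\mid X_1]=r_n^2f^2(X_1)+r_n^4\Big(\tfrac12 f(X_1)f''(X_1)-\tfrac{5}{12}\big(f'(X_1)\big)^2\Big)+O(r_n^6).
\]
So the disjoint part equals $n^4r_n^4f^4(X_1)+n^4r_n^6h(X_1)+O(n^3r_n^4+n^4r_n^8)$, with $h=f^2\big(ff''-\tfrac56(f')^2\big)$.

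For nonconstant periodic $f$ one has $h\not\equiv0$. Indeed, $\int_0^1\big(ff''-\tfrac56(f')^2\big)\,dx=-\tfrac{11}{6}\int_0^1(f')^2\,dx<0$, by the integration by parts used in Proposition \ref{lem4}. Now take any $x_0$ with $h(x_0)\neq0$; a neighbourhood of $x_0$ has positive probability. On it, $\mathbb{E}[Q_1^2\mid X_1]-n^4r_n^4f^4(X_1)=n^4r_n^6h(X_1)(1+o(1))$ whenever $nr_n^3\to\infty$, because the overlap contributions $O(n^3r_n^3)$ are then $o(n^4r_n^6)$. The paper explicitly treats this regime.

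So the lemma as written holds only when $nr_n^3=O(1)$, or when $f$ is uniform (then $\mathbb{E}[S_{123}\mid X_1]=r_n^2$ exactly). The correct general form is the one you propose, $n^4r_n^4f^4(X_1)+O(n^3r_n^3+n^4r_n^6)$, which still gives $\mathbb{E}[Q_1^2\mid X_1]=O(n^4r_n^4)$ uniformly in $X_1$. That order bound is all the paper actually uses, in (\ref{lem5eqpoof2}) and in (\ref{frifineq1})--(\ref{frifineq2}), so nothing downstream breaks.
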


\begin{Lemma}\label{adlemnew1}
   Under Assumption \ref{assumptiona}, we have
\begin{align}\label{lemnewww2}
\mathbb{E}[R_1^2|X_1]&=O(n^5r_n^9+n^4r_n^4+n^3r_n^3).
\end{align}
\end{Lemma}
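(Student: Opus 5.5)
The plan is to expand the square and use the fact that, conditionally on $X_1$, each centered variable $\bar A_{1l}$ has mean zero and depends only on $X_l$. Write
\[
\mathbb{E}[R_1^2|X_1]=\sum_{\substack{j\neq k\neq l\neq 1\\ j'\neq k'\neq l'\neq 1}}\mathbb{E}\big[T_{1jk}T_{1j'k'}\bar A_{1l}\bar A_{1l'}\,\big|\,X_1\big].
\]
Given $X_1$, the positions $X_2,\dots,X_n$ are i.i.d., $\bar A_{1l}$ is a function of $X_l$ only, and $\mathbb{E}[\bar A_{1l}|X_1]=0$. Hence, if $l\notin\{j',k',l'\}$, we can first integrate over $X_l$ with all other positions fixed, and the summand vanishes. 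The same holds for $l'$. Only two families of index configurations survive:
\begin{itemize}
\item[(a)] $l=l'$;
\item[(b)] $l\neq l'$, with $l\in\{j',k'\}$ and $l'\in\{j,k\}$.
\end{itemize}

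For case (a), first take $l=l'$ distinct from $j,k,j',k'$. Conditionally on $X_1$, the factor $\bar A_{1l}^2$ is then independent of $T_{1jk}T_{1j'k'}$, so these terms equal
\[
\sum_{l}\mathbb{E}[\bar A_{1l}^2|X_1]\;\mathbb{E}\Big[\Big(\sum_{j\neq k\neq 1,\,j,k\neq l}T_{1jk}\Big)^2\Big|X_1\Big].
\]
By (\ref{prope0}), $\mathbb{E}[\bar A_{1l}^2|X_1]\le \mathbb{E}[A_{1l}|X_1]=O(r_n)$, so the outer sum is $O(nr_n)$. The inner factor is the second moment of a copy of $P_1$ built on $n-2$ nodes. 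I will show that the bound (\ref{fieq1}) of Lemma \ref{lem1} holds conditionally on $X_1$, uniformly in $X_1$; this is what its proof should give, since it rests on the expansions of Lemma \ref{propmain}, whose remainders do not depend on $X_1$, and on the boundedness of $f$. The product is then
\[
O(nr_n)\cdot O(n^4r_n^8+n^3r_n^3+n^2r_n^2)=O(n^5r_n^9+n^4r_n^4+n^3r_n^3),
\]
which is exactly the claimed order. The remaining terms of case (a), where $l$ also coincides with one of $j,k,j',k'$, have at most four free indices. They are bounded crudely as in case (b).

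For case (b), the constraints leave at most four distinct free indices among $j,k,l,j',k',l'$, so there are $O(n^4)$ tuples. For a typical term, for instance $l'=j$ and $l=j'$, I bound $|\bar A_{1l}|\le A_{1l}+\mathbb{E}[A_{1l}|X_1]$ and $|T_{1jk}|\le A_{1j}(A_{jk}+A_{1k})$. After expanding, every product is a connected configuration on the vertices $1,j,k,l,k'$ in which each vertex other than $1$ is tied to $1$ either by a genuine edge or by a factor $\mathbb{E}[A|X_1]=O(r_n)$, through a path of length at most two. Integrating the positions out one vertex at a time, starting from the leaves, each vertex contributes a factor $O(r_n)$ because $f$ is bounded. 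This gives $O(r_n^4)$ per term, hence $O(n^4r_n^4)$ in total. Further coincidences among the indices reduce the count to $n^3$ at a cost of at most one factor $r_n$ per index. Since $nr_n\to\infty$, these contributions are $O(n^3r_n^3)$ and are absorbed.

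The main obstacle is case (a): justifying that Lemma \ref{lem1}'s bound on $\mathbb{E}[P_1^2]$ holds conditionally on $X_1$ with constants uniform in $X_1$. The cancellation inside $T_{1jk}=A_{1j}A_{jk}-A_{1j}A_{1k}$, which produces the small $n^4r_n^8$ term instead of $n^4r_n^4$, is essential there. My first approach is to rerun the argument of Lemma \ref{lem1} conditionally, noting that the leading terms of (\ref{prope1}) and (\ref{prope01}) agree up to $O(r_n^4)$ uniformly in $X_1$. The disjoint-index part of $\mathbb{E}[P_1^2|X_1]$ is then $n^4\,O(r_n^4)^2=O(n^4r_n^8)$, and the overlapping parts give the $n^3r_n^3+n^2r_n^2$ terms by the same counting as in case (b).
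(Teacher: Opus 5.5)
Your proposal is correct and follows essentially the paper's own proof. The same vanishing argument leaves two cases. The first is $l=l'$: factoring out $\mathbb{E}[\bar A_{1l}^2|X_1]=O(r_n)$ and using the conditional, $X_1$-uniform form of the bounds from Lemma \ref{lem1} gives $O(n^5r_n^9+n^4r_n^4+n^3r_n^3)$, which is what the paper obtains subcase by subcase. The second is $l\in\{j',k'\}$, $l'\in\{j,k\}$, with $O(n^4)$ tuples each of size $O(r_n^4)$; the paper's finer bounds there are not needed. Minor remark: the leftover subcase of (a), where $l$ equals one of $j,k,j',k'$, is actually empty, since $j\neq k\neq l$ and $j'\neq k'\neq l'$ force $l=l'\notin\{j,k,j',k'\}$.
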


\begin{Lemma}\label{lemnew1}
   Under Assumption \ref{assumptiona}, we have
    \begin{align}\label{lemneweq1}
    \mathbb{E}\left[\frac{P_1R_2}{\mu_1^2\mu_2^2}\right]= \mathbb{E}\left[\frac{P_2R_1}{\mu_1^2\mu_2^2}\right]=O\left(r_n^3+\frac{1}{n}+\frac{1}{n^2r_n^2}\right),\\ \label{lemneweq01}
        \mathbb{E}\left[\frac{(n+1)\mathbb{E}\left[A_{12}\mid X_1\right]P_1R_2}{\mu_1^2\mu_2^2}\right]=O\left(nr_n^4+r_n+\frac{1}{nr_n}\right),\\
      \label{lem2neweq2}
    \mathbb{E}\left[\frac{R_1Q_2}{\mu_1^2\mu_2^2}\right]= \mathbb{E}\left[\frac{R_2Q_1}{\mu_1^2\mu_2^2}\right]=O\left(r_n^3+\frac{1}{nr_n}\right).
\end{align}
\end{Lemma}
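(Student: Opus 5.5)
The plan is to exploit that the denominators are essentially deterministic and then bound the numerators by expanding them into sums over index tuples, rather than by Cauchy--Schwarz.

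\textbf{Denominators.} Since $\mu_i=(n-1)\mathbb{E}[A_{i2}\mid X_i]$, the expansion (\ref{prope0}) of Lemma \ref{propmain} together with $f\geq\lambda>0$ gives $\mu_i\geq c\,nr_n$ uniformly, for some constant $c>0$ and all large $n$. Moreover $\mu_i$ is a function of $X_i$ alone. Hence each expectation in the statement is bounded by $C(nr_n)^{-4}$ times an expectation of the form $\mathbb{E}[\,|\mathbb{E}[P_1R_2\mid X_1,X_2]|\,]$, or the analogous one with $Q$. In (\ref{lemneweq01}) the extra factor $(n+1)\mathbb{E}[A_{12}\mid X_1]$ is also a function of $X_1$ only and is $O(nr_n)$ by (\ref{prope0}). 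So (\ref{lemneweq01}) follows from the proof of (\ref{lemneweq1}) by multiplying the bound by $nr_n$, which turns $r_n^3+1/n+1/(n^2r_n^2)$ into $nr_n^4+r_n+1/(nr_n)$.

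\textbf{Why Cauchy--Schwarz is not enough.} Combining Lemma \ref{lem1} with Lemma \ref{adlemnew1} via Cauchy--Schwarz loses too much: it produces a term of order $n^{1/2}r_n^{9/2}$ and misses the bound $r_n^3$. Instead I will write
\[
\mathbb{E}[P_1R_2]=\sum_{j\neq k\neq 1}\ \sum_{j'\neq k'\neq l\neq 2}\mathbb{E}\big[T_{1jk}T_{2j'k'}\bar A_{2l}\big],
\]
and classify the terms by the coincidence pattern among the indices $\{1,2,j,k,j',k',l\}$. Two cancellation mechanisms will be used.

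\emph{Centering of $\bar A_{2l}$.} If $l$ coincides with none of $1,j,k,j',k'$, condition on all $X$'s except $X_l$. Then $A_{2l}$ is the only factor depending on $X_l$, and its conditional mean given $X_2$ is subtracted, so the term vanishes. Thus only patterns with $l\in\{1,j,k,j',k'\}$ survive, which removes one free index (a factor $n$).

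\emph{Near-cancellation in $T$.} Whenever $j,k$ (respectively $j',k'$) are free, i.e.\ appear in no other factor, integrating them out gives
\[
\mathbb{E}[T_{1jk}\mid X_1]=\mathbb{E}[A_{12}A_{23}\mid X_1]-\mathbb{E}[A_{12}A_{13}\mid X_1]=O(r_n^4)
\]
by (\ref{prope1})--(\ref{prope01}), because the leading terms $4r_n^2f^2$ cancel. This gives a gain of $r_n^2$ relative to the naive estimate $r_n^2$ for a two-edge path.

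\textbf{Counting.} For every other pattern, each surviving term is bounded by $O(r_n^{m-1})$, where $m$ is the number of distinct vertices. This holds because the union of the edges involved is connected through $2$ and through $1$, the latter via the node $l$ or a shared index, so each additional vertex must lie within $O(r_n)$ of an already placed one. The case bounds are then:
\begin{itemize}
\item Maximal patterns have about four free summation indices beyond $1,2$. The centering forces $l$ to coincide with an existing index, and one of the $T$'s is integrated with the $r_n^4$ cancellation. This yields $n^4r_n^4\cdot r_n^3$, i.e.\ $r_n^3$ after dividing by $(nr_n)^4$.
\item Patterns with one further coincidence yield $n^3r_n^3$, i.e.\ $1/n$.
\item Patterns with two further coincidences yield $n^2r_n^2$, i.e.\ $1/(n^2r_n^2)$.
\end{itemize}
The term $\mathbb{E}[P_2R_1]$ is identical by symmetry.

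\textbf{The $Q$ terms.} For (\ref{lem2neweq2}), I repeat the argument with $S_{2jk}$ in place of $T_{2jk}$. Here $\mathbb{E}[S_{2jk}\mid X_2]=r_n^2f^2(X_2)+O(r_n^4)$ by (\ref{prope1}) and (\ref{prope3}), so there is no cancellation. The centering of $\bar A_{1l}$ and the $T_{1jk}$ cancellation still apply, giving $r_n^3+1/(nr_n)$.

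\textbf{Main obstacle.} The hard part is the bookkeeping of overlap patterns. For each pattern I must verify that either the centering kills the term, or the $r_n^4$ cancellation of $T$ is available, or enough vertices are merged. The delicate cases are those where $l\in\{j,k\}$ links the two stars. There $T_{1jk}$ is no longer integrated freely, and the gain has to come from Taylor-expanding the conditional expectation given the shared position, with the $O(r_n^5)$ remainders of Lemma \ref{propmain} being uniform in $X_1$. I would handle these cases first by conditioning on $(X_1,X_2,X_l)$ and expanding as in Lemma \ref{propmain}.
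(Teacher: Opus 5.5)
Your proposal is correct and follows essentially the same route as the paper. Both expand into index tuples, use the centering of $\bar A_{2l}$ (resp.\ $\bar A_{1l}$) to force $l$ to coincide with another index, use $\mathbb{E}[T\mid X]=O(r_n^4)$ for a free star, and bound the remaining overlap patterns crudely; you also spell out (\ref{lemneweq01}) and (\ref{lem2neweq2}), which the paper dismisses as analogous. (Minor points: five-vertex patterns contribute $n^3r_n^4$, not $n^3r_n^3$, which is what gives $1/n$; and in every six-vertex pattern with $l\in\{j,k\}$ the star $T_{2j'k'}$ is still free, so no extra Taylor expansion is needed in your ``delicate'' cases.)
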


\begin{Lemma}\label{lemnew3}
   Under Assumption \ref{assumptiona}, we have
    \begin{align}\label{lem2neweq1}
    \mathbb{E}\left[\frac{R_1R_2}{\mu_1^2\mu_2^2}\right]=O\left(nr_n^5+r_n+\frac{1}{nr_n}\right).
\end{align}
\end{Lemma}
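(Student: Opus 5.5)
The plan is to condition on $(X_1,X_2)$, use that the denominators are deterministic and large, and then bound $\mathbb{E}[R_1R_2\mid X_1,X_2]$ by a case-by-case count over overlapping index patterns.

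\textbf{Reduction to the numerator.} Since $\mu_t=(n-1)\mathbb{E}[A_{t2}\mid X_t]$ is a function of $X_t$ alone, (\ref{prope0}) and $f\ge\lambda>0$ give $\mu_t\ge c\,nr_n$ uniformly. Hence
\[
\left|\mathbb{E}\left[\frac{R_1R_2}{\mu_1^2\mu_2^2}\right]\right|\le \frac{C}{n^4r_n^4}\,\mathbb{E}\Big[\big|\mathbb{E}[R_1R_2\mid X_1,X_2]\big|\Big].
\]
It therefore suffices to show, uniformly in $(X_1,X_2)$,
\[
\mathbb{E}[R_1R_2\mid X_1,X_2]=O\big(n^5r_n^9+n^4r_n^5+n^3r_n^3\big).
\]
Dividing by $n^4r_n^4$ then gives exactly $O(nr_n^5+r_n+1/(nr_n))$.

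\textbf{Expansion and vanishing terms.} I expand
\[
R_1R_2=\sum T_{1jk}\bar A_{1l}\,T_{2j'k'}\bar A_{2l'},
\]
and classify the summands by the coincidence pattern of $(j,k,l,j',k',l')$, including coincidences with the labels $1,2$ (for instance $j=2$ in $R_1$). Two structural facts kill most terms.

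First, $\bar A_{1l}$ has conditional mean zero given $X_1$. If $l\neq 2$ and $l$ appears in no other factor, I condition additionally on all $X$'s except $X_l$. Then $A_{1l}$ depends only on $(X_1,X_l)$ with $X_l$ independent, so the summand has zero expectation. The same holds for $l'$. So in every surviving term $l$ and $l'$ each coincide with some other index (possibly each other, or $2$, resp. $1$), which saves a factor $n$ per such constraint.

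Second, by (\ref{prope01}) and (\ref{prope1}) the leading terms $4r_n^2f^2$ cancel, so $\mathbb{E}[T_{1jk}\mid X_1]=O(r_n^4)$ uniformly. The same applies to $T_{2j'k'}$. This cancellation is usable only when $j,k$ are ``fresh'', i.e. appear nowhere else.

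\textbf{Counting.} For each surviving pattern with $m$ free indices I bound the term by $n^m$ times the probability of the induced connected structure. By (\ref{prope0})--(\ref{2prope3}) and bounded density, each extra vertex attached by an edge costs a factor $O(r_n)$.
\begin{itemize}
\item The dominant $n^5r_n^9$ term comes from $l=l'$ shared and $j,k,j',k'$ fresh. The two $T$ factors then contribute $O(r_n^4)$ each, and $\bar A_{1l}\bar A_{2l}$ summed over $l$ contributes $O(nr_n)$ after integration; this requires $|X_1-X_2|\lesssim r_n$, or else costs an extra smallness.
\item Patterns where one $T$ loses its cancellation because it shares an index with the other block ($j=j'$, $k=l'$, and so on) lose a factor $n$ but gain back $r_n^{-2}$ at most. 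They produce the $n^4r_n^5$ and $n^3r_n^3$ terms.
\item Patterns involving the labels $1,2$ inside the sums, e.g. $j=2$, force $A_{12}$ to appear and cost an extra $r_n$. They are handled the same way.
\end{itemize}
When the index sets of the two blocks are disjoint and no factor links node $1$ to node $2$, the conditional expectation factorizes given $(X_1,X_2)$. Each factor then vanishes by the centering argument, because its $l$ has no partner.

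\textbf{Main obstacle.} The main obstacle is the exhaustive enumeration of coincidence patterns, verifying for each that the power of $n$ and $r_n$ is dominated by one of $n^5r_n^9$, $n^4r_n^5$, $n^3r_n^3$. The delicate cases are those where $l$ coincides with $j'$ or $k'$. There the $T_{2j'k'}$ cancellation is partially destroyed, and one must check that the surviving first-order correction is still $O(r_n^3)$ rather than $O(r_n^2)$. I would first try to handle these by writing $T_{2j'k'}=A_{2j'}(A_{j'k'}-A_{2k'})$ and conditioning on $X_{j'}$. I would then use a first-order Taylor expansion of $f$ around $X_2$ (as in the proof of Lemma \ref{propmain}) to extract an extra factor $r_n$. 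The remaining patterns I would bound crudely, as in the proofs of Lemmas \ref{adlemnew1} and \ref{lemnew1}.
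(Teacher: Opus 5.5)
Your architecture matches the paper's:
\begin{itemize}
\item the deterministic bound $\mu_t\ge c\,nr_n$;
\item conditioning on $(X_1,X_2)$;
\item the centering argument forcing $l\in\{2,j',k',l'\}$ and $l'\in\{1,j,k,l\}$;
\item an enumeration of overlap patterns.
\end{itemize}
The gap is in your reduction. You claim it suffices to prove $\mathbb{E}[R_1R_2\mid X_1,X_2]=O(n^5r_n^9+n^4r_n^5+n^3r_n^3)$ \emph{uniformly} in $(X_1,X_2)$, and that uniform bound is false.

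Take $f\equiv 1$. Then $\mathbb{E}[A_{jk}-A_{1k}\mid X_1,X_j]=0$ exactly, so every pattern in which some $T$ has a fresh second index vanishes. Among the patterns with four free indices, only $\{k=k',\ l=l'\}$ and $\{l=k',\ l'=k\}$ survive. Put $u=d(X_1,X_k)$. Then $h(X_1,X_k):=\mathbb{E}[T_{1jk}\mid X_1,X_k]=(2r_n-u)_+-2r_nI[u\le r_n]$, which is of order $r_n$ pointwise. At $X_1=X_2$ the two patterns contribute as follows:
\begin{itemize}
\item the first gives $n^4\,\mathbb{E}[h^2]\,(2r_n-4r_n^2)\sim\frac{8}{3}n^4r_n^4$;
\item the second gives $n^4\big(\mathbb{E}[h(X_1,X_k)\bar A_{2k}\mid X_1=X_2]\big)^2=n^4(-r_n^2)^2$;
\item all patterns with at most three free indices are $O(n^3r_n^3)$ uniformly.
\end{itemize}
Hence $\mathbb{E}[R_1R_2\mid X_1=X_2]\sim\frac{11}{3}n^4r_n^4$. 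This is not $O(n^5r_n^9+n^4r_n^5+n^3r_n^3)$ whenever $nr_n^5\to0$ (e.g.\ $r_n=n^{-1/2}$).

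Your proposed repair, a first-order Taylor expansion of $f$ to gain a factor $r_n$, cannot fix these patterns. It works only when the second index of the affected $T$ is fresh, because then $\mathbb{E}[A_{jk}-A_{1k}\mid X_1,X_j]=O(r_n^2)$ on $\{A_{1j}=1\}$. Here $k$ is shared, and $h$ cancels only after integrating $X_k$ over the whole circle.

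The missing factor $r_n$ must come from the outer expectation. You should bound $\mathbb{E}\,\big|\mathbb{E}[R_1R_2\mid X_1,X_2]\big|$, not a supremum over $(X_1,X_2)$. In every four-free-index pattern where a $T$ has lost its $O(r_n^4)$ cancellation, the summand is, up to negligible pieces, supported on $\{d(X_1,X_2)\le Cr_n\}$. That event has probability $O(r_n)$. The paper does this implicitly, e.g.\ in cases (c2)--(c3). There it bounds by the unconditional expectation of a connected product of indicators and pays $r_n$ per spanning-tree edge, including the edges that tie $X_2$ to $X_1$.

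Your first bullet gestures at this closeness, but there it is not needed. For the $n^5r_n^9$ pattern, the uniform bound $\mathbb{E}[\bar A_{1l}\bar A_{2l}\mid X_1,X_2]=O(r_n)$ already suffices; closeness matters for the $n^4$ patterns. Done this way, the $l=j'$ patterns you call delicate need no Taylor expansion: crude absolute bounds, after taking the full expectation, already give $O(r_n^5)$ per term.

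Your heuristic ``lose a factor $n$, gain back $r_n^{-2}$'' also turns $n^5r_n^9$ into $n^4r_n^7$, not $n^4r_n^5$. The $n^4r_n^5$ class arises when both $T$'s lose their cancellation. There a uniform count gives only $n^4r_n^4$, and the last factor $r_n$ again comes from averaging over $(X_1,X_2)$.
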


\begin{Lemma}\label{lemmapp}
      Under Assumption \ref{assumptiona}, we have
\begin{align}  \label{franeq1} 
&\mathbb{E}\left[\frac{(n+1)\mathbb{E}\left[A_{12}\mid X_1\right]P_1P_2}{\mu_1^2\mu_2^2}\right]=O(nr_n^5+r_n),\\  \label{franeq2} 
&\mathbb{E}\left[\frac{P_1P_2}{\mu_1^2\mu_2^2}\right]=O\left(r_n+\frac{1}{nr_n}\right),\\ \nonumber
&\mathbb{E}\left[\frac{(n+1)^2\mathbb{E}\left[A_{12}\mid X_1\right]\mathbb{E}\left[A_{12}\mid X_2\right]P_1P_2}{\mu_1^2\mu_2^2}\right]\\  \label{franeq3} 
&=\left(n\mathbb{E}\left[\frac{\mathbb{E}\left[(A_{13}A_{34}
    -A_{13}A_{14})|X_1\right]}{\mathbb{E}\left[A_{12}\mid X_1\right]}\right]\right)^2+O\left(r_n+\frac{1}{n}+nr_n^4\right).
\end{align}
\end{Lemma}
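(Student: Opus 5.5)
The plan is to condition on $X_1,X_2$ throughout. Since $\mu_1=(n-1)\mathbb{E}[A_{12}\mid X_1]$ depends only on $X_1$, each expectation in Lemma \ref{lemmapp} can be rewritten as $\mathbb{E}\big[h(X_1,X_2)\,\mathbb{E}[P_1P_2\mid X_1,X_2]\big]$, where $h$ is deterministic and of order $(nr_n)^{-4}$, up to the extra factors $(n+1)\mathbb{E}[A_{12}\mid X_1]=\Theta(nr_n)$. This order holds uniformly because $f$ is bounded away from zero, by (\ref{prope0}).

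Next expand
\[
P_1P_2=\sum_{j\neq k\neq 1}\ \sum_{j'\neq k'\neq 2}T_{1jk}T_{2j'k'}
\]
and classify the terms by the overlap pattern of the index sets $\{1,j,k\}$ and $\{2,j',k'\}$.

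\textbf{Disjoint indices.} When $\{j,k\}\cap\{j',k'\}=\emptyset$ and neither set contains the other anchor, the two factors are conditionally independent given $X_1,X_2$. These terms contribute
\[
(n-2)_2(n-4)_2\,\mathbb{E}[T_{1jk}\mid X_1]\,\mathbb{E}[T_{2j'k'}\mid X_2],
\]
with $(m)_2=m(m-1)$. By (\ref{prope1}) and (\ref{prope01}),
\[
\mathbb{E}[T_{1jk}\mid X_1]=\tfrac{r_n^4}{3}\big[4f'^2(X_1)+2f(X_1)f''(X_1)\big]+O(r_n^6)=O(r_n^4).
\]
\begin{itemize}
\item In (\ref{franeq3}), after dividing by $\mu_1^2\mu_2^2$ and multiplying by $(n+1)^2\mathbb{E}[A_{12}\mid X_1]\mathbb{E}[A_{12}\mid X_2]$, the leading part factorizes into the product of $n\,\mathbb{E}[T_{134}\mid X_1]/\mathbb{E}[A_{12}\mid X_1]$ and its analogue in $X_2$. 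Since $X_1,X_2$ are independent, its expectation is exactly the displayed square. The corrections from $(n-1)$ versus $(n+1)$ and $(n-2)_2$ versus $n^2$ are relative $O(1/n)$ on a quantity of size $(nr_n^3)^2$, which gives $O(nr_n^6\cdot r_n^{-1}\cdot\ldots)$ and is absorbed into $O(nr_n^4+1/n)$.
\item In (\ref{franeq1}) the same terms give $n^4r_n^8\cdot nr_n/(nr_n)^4=nr_n^5$.
\item In (\ref{franeq2}) they give $r_n^4\le r_n$.
\end{itemize}

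\textbf{Overlapping indices.} For terms where the index sets overlap, or where $1\in\{j',k'\}$ or $2\in\{j,k\}$, I bound $|T_{1jk}T_{2j'k'}|$ by products of indicators and count. One shared index among $n^3$ choices gives a connected structure through the shared vertex. Here the point is that the two stars are linked only through that vertex, so $\mathbb{E}$ is $O(r_n^{4})$ for the relevant path-type configurations. The total is $O(n^3r_n^4)$, and after division by $n^4r_n^4$ this is $O(1/n)$, or $O(r_n)$ once the extra $nr_n$ factors are included.

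Terms involving the edge $A_{12}$ appear when $2\in\{j,k\}$ or vice versa. There $X_1,X_2$ must be within $2r_n$, which gives an extra factor $r_n$ after integrating over $X_2$, and this produces the $O(r_n)$ contributions. Two shared indices give $O(n^2r_n^{3})$ and hence $O(1/(nr_n))$ terms. I would bound these crude terms using $\mathbb{E}[P_1^2]$ from Lemma \ref{lem1} together with Cauchy–Schwarz where convenient.

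\textbf{Main obstacle.} The hard part is the overlap with exactly one shared index. A naive bound of $r_n^3$ per term, for example from $\mathbb{E}[A_{1j}A_{jk}A_{2j}]$, would give $n^3r_n^3/(n^4r_n^4)=1/(nr_n)$. That is fine for (\ref{franeq2}), but after the two $nr_n$ factors in (\ref{franeq3}) it would give $nr_n$, which is too large. So I must exploit the cancellation inside $T_{1jk}=A_{1j}(A_{jk}-A_{1k})$: conditional on $X_1,X_j$ the mean in $X_k$ is $O(r_n^2)$ rather than $O(r_n)$, by a Taylor expansion as in Lemma \ref{propmain}. I would first try to center the free index $k$ (or $k'$) this way, obtaining an extra factor $r_n$ per free index, so that these terms become $O(n^3r_n^5)$ and contribute $O(nr_n^4+r_n)$ as claimed.
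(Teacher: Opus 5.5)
Your overall architecture matches the paper's: condition on $X_1,X_2$, expand $P_1P_2$ by overlap pattern, and factorize the disjoint terms. Your treatment of the disjoint terms, and of (\ref{franeq1}) and (\ref{franeq2}), is fine; for (\ref{franeq2}), Cauchy--Schwarz with Lemma \ref{lem1} alone already suffices.

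\textbf{The gap} is in the step you flag as the main obstacle: the $n^3$ terms of (\ref{franeq3}) with exactly one shared index. It begins with the bookkeeping.
\begin{itemize}
\item With $j,k,j',k'\ge 3$, each of the four products in $T_{1jk}T_{2j'k'}$ is a four-edge tree on five vertices. So the naive bound is $O(r_n^4)$ per term, as you note in your overlap paragraph, not $O(r_n^3)$.
\item The prefactor in (\ref{franeq3}) is $(n+1)^2/\big((n-1)^4\,\mathbb{E}[A_{12}\mid X_1]\,\mathbb{E}[A_{12}\mid X_2]\big)=\Theta\big((nr_n)^{-2}\big)$. Reaching $O(nr_n^4)$ therefore requires $O(r_n^6)$ per term.
\item Your target $O(n^3r_n^5)$ only yields $n^3r_n^5/(nr_n)^2=nr_n^3$. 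This is not $O(r_n+1/n+nr_n^4)$. It is of order one exactly in the regime $nr_n^3\to\lambda$, where Proposition \ref{lem5} needs an $o(1)$ error.
\end{itemize}

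Pointwise centering of free end indices also cannot deliver $r_n^6$ in every configuration.
\begin{itemize}
\item For $j=j'$ it works: $T_{1jk}T_{2jk'}=A_{1j}A_{2j}(A_{jk}-A_{1k})(A_{jk'}-A_{2k'})$, and integrating out $X_k,X_{k'}$ leaves $A_{1j}A_{2j}\,O(r_n^4)$.
\item For $j=k'$ (or $k=j'$) there is only one free end. Centering it gives $2r_n(f(X_j)-f(X_1))+O(r_n^3)=O(r_n^2)$ on $\{A_{1j}=1\}$, while $\mathbb{E}|A_{1j}A_{2j'}(A_{j'j}-A_{2j})|=O(r_n^3)$. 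You stop at $O(r_n^5)$.
\item For $k=k'$ there is no free end at all, and $\mathbb{E}[A_{1j}(A_{jk}-A_{1k})\mid X_1,X_k]$ is genuinely of order $r_n$.
\end{itemize}

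\textbf{The missing ingredient} is a cancellation obtained by integrating the anchor positions $X_1,X_2$ against the normalizers. Since $\mathbb{E}[A_{12}\mid X_t]=2r_nf(X_t)(1+O(r_n^2))$ uniformly, one has $\mathbb{E}\big[A_{1j}/\mathbb{E}[A_{12}\mid X_1]\,\big|\,X_j\big]=1+O(r_n^2)$, cf.\ (\ref{aera1}).
\begin{itemize}
\item For $j=k'$ this gives
\[
\mathbb{E}\Big[\frac{A_{1j}\,(f(X_j)-f(X_1))}{\mathbb{E}[A_{12}\mid X_1]}\,\Big|\,X_j\Big]=\frac{1}{2r_n}\int_{-r_n}^{r_n}\big(g(X_j)-g(X_j+u)\big)\,du+O(r_n^3)=O(r_n^2).
\]
This is one order better than the pointwise bound, because the odd term $g'(X_j)u$ integrates to zero, and it supplies the missing factor.
\item For $k=k'$, integrate out $X_j,X_{j'}$ exactly and $X_1,X_2$ via the identity above. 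The four normalized products reduce to $\mathbb{E}[A_{jk}A_{j'k}]-\mathbb{E}[A_{jk}A_{2k}]-\mathbb{E}[A_{1k}A_{j'k}]+\mathbb{E}[A_{1k}A_{2k}]+O(r_n^4)$, which is $O(r_n^4)$ by exchangeability.
\end{itemize}
This is the mechanism behind the paper's term-by-term computation. There each normalized product is shown to equal $(2r_n)^2\mathbb{E}[f^2(X_j)]+O(r_n^4)$, and the signs cancel.

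A similar issue arises for the $n^3$ terms with $2\in\{j,k\}$ and $j',k'\ge 3$ not in $\{j,k\}$. The closeness of $X_1$ and $X_2$ is already counted inside the naive $O(r_n^4)$. What rescues these terms is $\mathbb{E}[T_{2j'k'}\mid X_2]=O(r_n^4)$ from Lemma \ref{propmain}, applied to the untouched two-path.
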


\begin{Lemma}\label{lemqq}
     Under Assumption \ref{assumptiona}, we have
    \begin{align*} 
\mathbb{E}\left[\frac{Q_1Q_2}{\mu_1^2\mu_2^2}\right]=\left(\mathbb{E}\left[\frac{\mathbb{E}\left[\big(A_{12}A_{13}
    -A_{12}A_{13}A_{23}\big)|X_1\right]}{\big(\mathbb{E}[A_{12}|X_1]\big)^2}\right]\right)^2+O\left(\frac{1}{nr_n}\right).
\end{align*}
\end{Lemma}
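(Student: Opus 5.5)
We expand $Q_1Q_2=\sum_{j\neq k\neq 1}\sum_{l\neq m\neq 2}S_{1jk}S_{2lm}$ and split the double sum according to how the index sets $\{1,j,k\}$ and $\{2,l,m\}$ overlap. Here $\mu_1,\mu_2$ are functions of $X_1,X_2$ only. So we condition on $(X_1,X_2)$ and compute $\mathbb{E}[Q_1Q_2\mid X_1,X_2]$. Afterwards we integrate against $1/(\mu_1^2\mu_2^2)$.

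\textbf{Disjoint terms.} These are the terms where $\{j,k\}\cap\{2,l,m\}=\emptyset$ and $\{l,m\}\cap\{1,j,k\}=\emptyset$. Given $(X_1,X_2)$, the factor $S_{1jk}$ depends only on $(X_1,X_j,X_k)$ and $S_{2lm}$ only on $(X_2,X_l,X_m)$. Since the $X$'s are independent, the two factors are conditionally independent. Hence
\[
\mathbb{E}[S_{1jk}S_{2lm}\mid X_1,X_2]=h(X_1)h(X_2),\qquad h(x)=\mathbb{E}\big[A_{12}A_{13}-A_{12}A_{13}A_{23}\mid X_1=x\big].
\]
There are $(n-3)_2(n-5)_2=n^4(1+O(1/n))$ such terms. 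Writing $\mu_t=(n-1)\mathbb{E}[A_{12}\mid X_t]$, the contribution of the disjoint terms is
\[
\frac{(n-3)_2(n-5)_2}{(n-1)^4}\,\mathbb{E}\left[\frac{h(X_1)}{(\mathbb{E}[A_{12}\mid X_1])^2}\right]\mathbb{E}\left[\frac{h(X_2)}{(\mathbb{E}[A_{12}\mid X_2])^2}\right],
\]
which factorizes because $X_1$ and $X_2$ are independent. By (\ref{prope0}), (\ref{prope1}) and (\ref{prope3}), $h(x)=r_n^2f^2(x)+O(r_n^4)$ and $(\mathbb{E}[A_{12}\mid X_1])^2=4r_n^2f^2(x)(1+O(r_n^2))$. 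Since $f\geq\lambda>0$, each expectation is $\Theta(1)$. The prefactor is $1+O(1/n)$, so the disjoint part equals the squared term in the statement plus $O(1/n)$.

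\textbf{Overlapping terms.} These are the terms with at least one shared index, e.g. $j=l$, $j=2$, $l=1$, or two coincidences. There are $O(n^3)$ terms with exactly one coincidence and $O(n^2)$ with two or more. We bound each by products of indicators and use Lemma \ref{propmain}.
\begin{itemize}
\item One shared free index ($j=l$, etc.): $|S_{1jk}S_{2jm}|\le A_{1j}A_{1k}A_{2j}A_{2m}$. Conditional on $X_1,X_2$, this is $O(r_n^3)I[d(X_1,X_2)\le 2r_n]$, and the indicator has expectation $O(r_n)$. These $O(n^3)$ terms contribute $O(n^3r_n^4)$.
\item Terms where $2\in\{j,k\}$ or $1\in\{l,m\}$: these force $A_{12}=1$, giving $O(n^3)\cdot O(r_n)\cdot O(r_n^3)=O(n^3r_n^4)$.
\item Terms with two coincidences: $O(n^2)\cdot O(r_n^3)$.
\end{itemize}
Dividing by $\mu_1^2\mu_2^2=\Theta(n^4r_n^4)$ gives $O(1/n+1/(n^2r_n))=O(1/(nr_n))$.

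\textbf{Main obstacle.} The only delicate point is getting the uniform $O(r_n^{\cdot})$ bounds on these conditional indicator products, uniformly in $X_1,X_2$. This is where boundedness of $f$ is used: the locations involved must lie in windows of length $O(r_n)$ around $X_1$ and $X_2$. Once those bounds are in place, the $O(1/n)$ and $O(1/(nr_n))$ errors combine to the claimed $O(1/(nr_n))$, since $r_n=o(1)$.
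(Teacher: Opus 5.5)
Your proposal is correct and follows essentially the same route as the paper: you split $\sum S_{1jk}S_{2j_1k_1}$ by index overlap, factor the fully disjoint terms by conditional independence given $(X_1,X_2)$ to get the squared mean plus $O(1/n)$, and bound the $O(n^3)$ one-overlap and $O(n^2)$ multi-overlap terms crudely via Lemma \ref{propmain} and $\mu_t=\Theta(nr_n)$. Your overlap bounds are slightly sharper than the paper's, since you keep the localization $I[d(X_1,X_2)\le 2r_n]$ for a shared free index and the factor $A_{12}$ when $2\in\{j,k\}$ or $1\in\{j_1,k_1\}$, both of which the paper discards; this gives $O(r_n^4)$ instead of $O(r_n^3)$ per one-overlap term and an overall error of $O(1/n)$, and the small slip in counting the disjoint tuples (the count is $(n-2)(n-3)(n-4)(n-5)$) is immaterial.
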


\begin{Lemma}\label{lempq}
      Under Assumption \ref{assumptiona}, we have
\begin{align*}  
&\mathbb{E}\left[\frac{(n+1)\mathbb{E}\left[A_{12}\mid X_1\right]P_1Q_2}{\mu_1^2\mu_2^2}\right]\\
&=n\mathbb{E}\left[\frac{\mathbb{E}[(A_{13}A_{34}-A_{13}A_{14})|X_1]}{\mathbb{E}\left[A_{12}\mid X_1\right]}\right]\mathbb{E}\left[\frac{\mathbb{E}[(A_{25}A_{26}-A_{25}A_{26}A_{56})|X_2]}{(\mathbb{E}\left[A_{12}\mid X_2\right])^2}\right]+O\left(r_n+\frac{1}{nr_n}\right).
\end{align*}
\end{Lemma}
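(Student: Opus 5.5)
The plan is to condition on $(X_1,X_2)$ and split the double sum $P_1Q_2=\sum_{j\ne k\ne 1}\sum_{l\ne m\ne 2}T_{1jk}S_{2lm}$ according to how the index sets $\{1,j,k\}$ and $\{2,l,m\}$ overlap. The point that makes this work is that $\mu_i=\mathbb{E}[d_i\mid X_i]=(n-1)\mathbb{E}[A_{12}\mid X_i]$ is a deterministic function of $X_i$. Hence the denominator $\mu_1^2\mu_2^2$ and the prefactor $(n+1)\mathbb{E}[A_{12}\mid X_1]$ pass through the conditional expectation given $(X_1,X_2)$. Only $\mathbb{E}[P_1Q_2\mid X_1,X_2]$ needs to be analysed.

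\textbf{Disjoint terms.} Consider pairs with $\{j,k\}\cap\{2,l,m\}=\emptyset$ and $\{l,m\}\cap\{1,j,k\}=\emptyset$. Then $T_{1jk}$ is a function of $(X_1,X_j,X_k)$ and $S_{2lm}$ is a function of $(X_2,X_l,X_m)$. These are conditionally independent given $(X_1,X_2)$, so
\[\mathbb{E}[T_{1jk}S_{2lm}\mid X_1,X_2]=\mathbb{E}[A_{13}A_{34}-A_{13}A_{14}\mid X_1]\;\mathbb{E}[A_{25}A_{26}-A_{25}A_{26}A_{56}\mid X_2].\]
There are $(n-2)(n-3)(n-4)(n-5)=n^4(1+O(1/n))$ such terms. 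Dividing by $\mu_1^2\mu_2^2=(n-1)^4\mathbb{E}[A_{12}|X_1]^2\mathbb{E}[A_{12}|X_2]^2$ and multiplying by $(n+1)\mathbb{E}[A_{12}|X_1]$ gives $n(1+O(1/n))$ times the product of the two ratios in the statement. The ratios depend on $X_1$ and $X_2$ separately, so taking the outer expectation factorizes into the claimed product.

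To control the $O(1/n)$ relative error, I use Lemma \ref{propmain} together with $f\ge\lambda$. The first ratio is $O(r_n^4)/\Theta(r_n)=O(r_n^3)$, because the $r_n^2$ terms of (\ref{prope1}) and (\ref{prope01}) cancel. The second ratio is $O(r_n^2)/\Theta(r_n^2)=O(1)$. So the main term is $O(nr_n^3)$ and the relative error contributes only $O(r_n^3)$.

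\textbf{Overlapping terms.} I bound these crudely using $|T_{1jk}|\le A_{1j}A_{jk}+A_{1j}A_{1k}$ and $0\le S_{2lm}\le A_{2l}A_{2m}$. The prefactor is $(n+1)\mathbb{E}[A_{12}|X_1]/(\mu_1^2\mu_2^2)=\Theta(nr_n/(n^4r_n^4))$, uniformly, since $f$ is bounded above and below. I then count index patterns.

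\emph{Exactly one coincidence.} Examples are $j=l$, or $j=2$, or $l=1$. There are $O(n^3)$ such terms. Each dominating product is a connected subgraph containing both nodes $1$ and $2$, with four edges spanning five vertices (or containing the edge $A_{12}$). Integrating out the free vertices one at a time gives $O(r_n^4)$, and sometimes $O(r_n^5)$. The total contribution is therefore $O(n^3r_n^4\cdot nr_n/(n^4r_n^4))=O(r_n)$.

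\emph{Two or more coincidences.} There are $O(n^2)$ terms. Each dominating product spans a connected graph on at most four vertices, so its expectation is $O(r_n^3)$. This gives $O(n^2r_n^3\cdot nr_n/(n^4r_n^4))=O(1/(nr_n))$.

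Collecting the disjoint and overlapping contributions gives the stated error $O(r_n+1/(nr_n))$.

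\textbf{Main obstacle.} The delicate step is the one-coincidence count. Here I must check, pattern by pattern, that the dominating monomial genuinely forces an $O(r_n^4)$ expectation. The mechanism is that linking nodes $1$ and $2$ through a shared vertex forces $|X_1-X_2|=O(r_n)$. The crude bound on $T_{1jk}$ discards the cancellation in $T$, so there is no slack: the count must come out exactly at order $n^3r_n^4$ and not larger. I would go through each pattern ($j=2$, $k=2$, $l=1$, $m=1$, $j=l$, $j=m$, $k=l$, $k=m$). For each, I would integrate over the free positions using that $f$ is bounded, as in the proof of Lemma \ref{propmain}.
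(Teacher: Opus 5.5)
Your proposal is correct and follows essentially the same route as the paper. You condition on $(X_1,X_2)$ and factor the terms with $j,k,l,m\ge 3$ all distinct, which gives the main term with an $O(r_n^3)$ error. You then bound the overlapping patterns in absolute value by counting, where your uniform connectivity/spanning-tree argument replaces the paper's case-by-case list ($j=j_1$, $j=2$, $j_1=1$, etc.).

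One harmless slip: $n^2r_n^3\cdot nr_n/(n^4r_n^4)=1/n$, not $1/(nr_n)$, and three-vertex patterns give only $O(r_n^2)$ but there are only $O(n)$ of them. So your double-overlap bound is actually slightly sharper than the paper's, which discards $S_{2j_1k_1}$ (using $|S_{2j_1k_1}|\le 2$) in that case.
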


\section{The expectation of the friendship paradox }\label{expecfp}

Equipped with Lemma \ref{propmain}-Lemma \ref{lempq}, we now derive the weak law of large numbers for the friendship paradox.
To apply Markov’s inequality, we first determine the asymptotic behavior of the mean and variance of the friendship index.
As a preliminary step, this section provides an asymptotic expression for the expectation of the friendship paradox index.

The expectation of the friendship paradox index is equivalent to \(\mathbb{E}[\Delta _{1}]\). The primary obstacle in evaluating this expectation is the presence of the random variable \(d_{1}\) in the denominator of \(\Delta _{1}\). While one might attempt to compute this via the law of total expectation—\(\mathbb{E}[\Delta _{1}]=\sum _{k=1}^{n-1}\mathbb{E}[\Delta _{1}|d_{1}=k]\mathbb{P}(d_{1}=k)\)—this approach necessitates a complex summation involving binomial coefficients. Furthermore, inherent edge dependencies render a closed-form expressions for \(\mathbb{E}[\Delta _{1}|d_{1}=k]\) and \(\mathbb{P}(d_{1}=k)\) analytically intractable. A common heuristic is to approximate the denominator \(d_{1}\) by its mean \(\mathbb{E}[d_{1}]\), assuming the resulting error is negligible. This approximation is valid for Erdős-Rényi graphs, where the degree concentrates such that \(d_{1}=\mathbb{E}[d_{1}]+O_{P}(\sqrt{\mathbb{E}[d_{1}]})\). However, this concentration fails to hold for non-uniform RGGs, where the degree only satisfies \(d_{1}=\mathbb{E}[d_{1}]+O_{P}(\mathbb{E}[d_{1}])\). Our analysis demonstrates that a more robust strategy is to substitute \(d_{1}\) in the denominator with its conditional expectation \(\mathbb{E}[d_{1}|X_{1}]\). We then obtain an asymptotic expression for the expectation and show that the approximation error is negligible, as stated in Proposition \ref{lem4}.
\begin{Proposition}\label{lem4}
   Under Assumption \ref{assumptiona}, we have
\begin{align*}    
\mathbb{E}[\mathcal{F}_{n}]=\mathbb{E}\left[\Delta_1\right]=\frac{nr_n^3}{3}\int_0^1  [f^{\prime}(x)]^2dx+\frac{1}{4}+o(1).
\end{align*}
\end{Proposition}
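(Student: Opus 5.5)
Throughout, write $p_1=\mathbb{E}[A_{12}\mid X_1]$, so that $\mu_1=(n-1)p_1$. The plan has four steps: (i) rewrite $\Delta_1$ exactly as $P_1/d_1$; (ii) replace the random denominator $d_1$ by $\mu_1$ through a second-order expansion whose remainder is controlled by Lemma \ref{lem3}; (iii) compute the leading term $\mathbb{E}[P_1/\mu_1]$ with Lemma \ref{propmain}, which produces $\frac{nr_n^3}{3}\int_0^1 (f')^2dx$; (iv) compute the first-order correction, which produces the constant $\frac14$.

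\textbf{Step (i): exact reduction.} On $\{d_1\ge 1\}$ we have
\[\sum_j A_{1j}d_j=d_1+\sum_{j\neq k\neq 1}A_{1j}A_{jk},\qquad d_1^2=d_1+\sum_{j\neq k\neq 1}A_{1j}A_{1k}.\]
Subtracting gives $d_1\Delta_1=P_1$. Hence $\Delta_1=\frac{P_1}{d_1}I[d_1\ge1]$, and $\mathbb{E}[\mathcal F_n]=\mathbb{E}[\Delta_1]$ by exchangeability.

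\textbf{Step (ii): replacing the denominator.} Use the algebraic identity
\[\frac{1}{d_1}=\frac{1}{\mu_1}-\frac{d_1-\mu_1}{\mu_1^2}+\frac{(d_1-\mu_1)^2}{d_1\mu_1^2}.\]
This gives
\[\mathbb{E}[\Delta_1]=\mathbb{E}\Big[\frac{P_1}{\mu_1}\Big]-\mathbb{E}\Big[\frac{P_1(d_1-\mu_1)}{\mu_1^2}\Big]+o(1).\]
Two facts justify this. The remainder term is $o(1)$ by (\ref{fieq6}) of Lemma \ref{lem3}. The indicator $I[d_1\ge 1]$ can be dropped in the first two terms at cost $o(1)$: by Lemma \ref{lem2}, $\mathbb{P}(d_1=0)$ is exponentially small in $nr_n$, and on that event the integrands are polynomially bounded, with $|P_1|\le n^2$ and $\mu_1\asymp nr_n$.

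\textbf{Step (iii): the leading term.} Conditioning on $X_1$ gives
\[\mathbb{E}[P_1\mid X_1]=(n-1)(n-2)\big(\mathbb{E}[A_{12}A_{23}\mid X_1]-\mathbb{E}[A_{12}A_{13}\mid X_1]\big).\]
By (\ref{prope1})--(\ref{prope01}) the $r_n^2$ terms cancel, leaving
\[\mathbb{E}[P_1\mid X_1]=(n-1)(n-2)\Big(\frac{r_n^4}{3}\big[4(f')^2+2ff''\big]+O(r_n^6)\Big).\]
Dividing by $\mu_1=(n-1)\big(2r_nf+O(r_n^3)\big)$ and integrating against the density $f$ yields
\[\frac{nr_n^3}{6}\int_0^1\big(4(f')^2+2ff''\big)dx=\frac{nr_n^3}{3}\int_0^1(f')^2dx.\]
The last equality uses integration by parts, $\int_0^1 ff''\,dx=-\int_0^1(f')^2dx$; the boundary terms vanish by the periodicity in Assumption \ref{assumptiona}. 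Because $f\ge\lambda>0$ and the remainders in Lemma \ref{propmain} are uniform in $X_1$, the error is $O(nr_n^5+r_n^2)$.

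\textbf{Step (iv): the constant $\tfrac14$.} Write $d_1-\mu_1=\sum_{l\ne1}\bar A_{1l}$ and split $P_1(d_1-\mu_1)=R_1+\sum_{j\neq k\neq 1}T_{1jk}(\bar A_{1j}+\bar A_{1k})$.

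\emph{The $R_1$ part vanishes.} Conditionally on $X_1$, the factor $\bar A_{1l}$ with $l\notin\{j,k\}$ depends only on $X_l$, which is independent of $T_{1jk}$. Hence $\mathbb{E}[R_1\mid X_1]=0$.

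\emph{The remaining part.} Using $A_{1j}^2=A_{1j}$, we get $T_{1jk}A_{1j}=T_{1jk}$ and $T_{1jk}A_{1k}=A_{1j}A_{jk}A_{1k}-A_{1j}A_{1k}$. The conditional expectation of this part is therefore
\[(n-1)(n-2)\Big(\mathbb{E}[A_{12}A_{23}|X_1]+\mathbb{E}[A_{12}A_{13}A_{23}|X_1]-2\mathbb{E}[A_{12}A_{13}|X_1]-2p_1\big(\mathbb{E}[A_{12}A_{23}|X_1]-\mathbb{E}[A_{12}A_{13}|X_1]\big)\Big).\]
By Lemma \ref{propmain}, its leading order is $n^2(4+3-8)r_n^2f^2=-n^2r_n^2f^2(X_1)$. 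Dividing by $\mu_1^2\approx 4n^2r_n^2f^2(X_1)$ gives $-\frac14$, so the minus sign in front contributes $+\frac14$. All other contributions are $O(r_n^2+n^{-1})$.

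\textbf{Main obstacle.} I expect the hardest point to be bookkeeping of errors rather than any single identity. There are two issues. First, the uniform remainder $O(r_n^6)$ in $\mathbb{E}[P_1\mid X_1]$ leads to an error of order $nr_n^5$ in Step (iii). This must be shown to be $o(1)$, or absorbed by expanding $\mu_1^{-1}$ one order further and using periodic integration by parts again. Second, the justification that $R_1$ has zero conditional mean must be checked carefully. Edges incident to node 1 are conditionally independent given $X_1$ only when $l$ is distinct from $j$ and $k$, which is exactly what the definition of $R_1$ ensures.
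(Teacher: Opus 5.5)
Your route is the paper's. Your $\frac{1}{\mu_1}-\frac{d_1-\mu_1}{\mu_1^2}$ is exactly the paper's $\frac{2\mu_1-d_1}{\mu_1^2}$. The remainder is handled by (\ref{fieq6}), the conditional moments come from Lemma \ref{propmain}, and the $ff''$ term is removed by periodic integration by parts. You split $P_1(d_1-\mu_1)=R_1+\sum_{j\neq k\neq 1}T_{1jk}(\bar A_{1j}+\bar A_{1k})$ where the paper expands $d_1P_1=\sum_{j\neq k\neq l\neq 1}T_{1jk}A_{1l}+P_1-Q_1$. This is only bookkeeping: the two give identical conditional expectations. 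Your argument for $\mathbb{E}[R_1\mid X_1]=0$ is fine, and your count $4+3-8=-1$ giving $+\frac14$ is correct.

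One small fix: Lemma \ref{lem2} is not the right tool for dropping $I[d_1\ge 1]$, since $\frac{n^2}{nr_n}e^{-cnr_n}$ need not vanish when $nr_n=O(\log n)$. Simply note that $P_1=0$ on $\{d_1=0\}$, so both integrands vanish there identically.

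The $O(nr_n^5)$ obstacle you flag is a genuine gap, and the paper does not close it either. It records the error in (\ref{lem4poofeq5})--(\ref{lem4poofeq6}) as $o(1)$ and then multiplies by $n$ through (\ref{lem4poofeq3}). Your proposed repair cannot succeed, because the $r_n^5$ term does not integrate to zero. Write $p(x)=\mathbb{E}[A_{12}\mid X_1=x]$. Then $\mathbb{E}[A_{12}A_{23}\mid X_1=x]=\int_{d(x,y)\le r_n}f(y)p(y)\,dy$, and symmetrizing in $(x,y)$ gives the exact identity
\[
\mathbb{E}\left[\frac{\mathbb{E}[A_{12}A_{23}\mid X_1]-\mathbb{E}[A_{12}A_{13}\mid X_1]}{p(X_1)}\right]=\frac12\iint_{d(x,y)\le r_n}f(x)f(y)\frac{\big(p(x)-p(y)\big)^2}{p(x)p(y)}\,dx\,dy .
\]
Insert $p=2r_n\big(f+\frac{r_n^2}{6}f''+O(r_n^4)\big)$ and expand in $y-x$. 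This gives $\frac{r_n^3}{3}\int_0^1(f')^2dx-c_fr_n^5(1+o(1))$, with $c_f=\frac{23}{180}\int_0^1(f'')^2dx+\frac{1}{27}\int_0^1(f')^4f^{-2}dx$, which is positive for every nonconstant $f$.

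As a check, take $f=1+\epsilon\cos(2\pi x)$. The right side equals $\epsilon^2r_ns^2(1-s)+O(\epsilon^3)$ with $s=\frac{\sin(2\pi r_n)}{2\pi r_n}$. Its $r_n^5$ coefficient is $-\frac{23}{360}(2\pi)^4\epsilon^2$, matching $c_f$ at order $\epsilon^2$.

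All other pieces of $\mathbb{E}[\Delta_1]$ are $\frac14+o(1)$, so
\[
\mathbb{E}[\Delta_1]=\frac{nr_n^3}{3}\int_0^1(f')^2dx+\frac14-c_fnr_n^5(1+o(1))+o(1).
\]
Hence your argument, like the paper's, proves the proposition exactly when $nr_n^5\to 0$. If $nr_n^5\not\to 0$ and $f$ is nonconstant, the additive $o(1)$ claim is false. What remains true in general is the relative statement $\mathbb{E}[\mathcal F_n]=(1+o(1))\big(\frac{nr_n^3}{3}\int_0^1(f')^2dx+\frac14\big)$, because $nr_n^5=o(nr_n^3)$.
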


Interestingly, according to Proposition \ref{lem4}, the expectation of the friendship paradox is governed by the node distribution via the derivative of its probability density. When \(f(x)\) is the uniform density, the expectation is asymptotically \(1/4\). Otherwise, it may approach an arbitrary positive constant or diverge to infinity. We provide a more detailed discussion in a later section

{\bf Proof of Proposition \ref{lem4}:} 
By Definition \ref{def0}, if $d_1=0$, then $\Delta_1=0$. 
Then, we have $\Delta_1=\Delta_1I[d_1\geq1]$ and $\mathbb{E}[\Delta_1]=\mathbb{E}[\Delta_1I[d_1\geq1]]$. 
It is straightforward to verify that
\begin{align}\label{lem4poofeq1}
    \Delta_1=\frac{P_1}{d_1}
    = \frac{\mu_1P_1-\bar d_1 P_1}{\mu_1^2}
    +\frac{P_1 (d_1-\mu_1)^2}{d_1\mu_1^2} = \frac{2\mu_1P_1-d_1 P_1}{\mu_1^2}
    +\frac{P_1 (d_1-\mu_1)^2}{d_1\mu_1^2}.
\end{align}
When $d_1=0$, $A_{1j}=0$ for all $j\in\{1,2,\dots,n\}$. In this case, $P_1=d_1=0$.  By Lemma \ref{lem3} and the properties of conditional expectation, we have
\begin{align}  \nonumber
\mathbb{E}\left[\Delta_1\right]
    &=\mathbb{E}\left[\frac{2\mu_1 P_1- d_1 P_1}{\mu_1^2}I[d_1\geq1]\right]
    +\mathbb{E}\left[\frac{P_1 (d_1-\mu_1)^2}{d_1\mu_1^2}I[d_1\geq1]\right]\\   \nonumber
    &=\mathbb{E}\left[\frac{2\mu_1 P_1- d_1 P_1}{\mu_1^2}\right]
    +\mathbb{E}\left[\frac{P_1 (d_1-\mu_1)^2}{d_1\mu_1^2}I[d_1\geq1]\right]\\  \label{lem4poofeq2}
    &=\mathbb{E}\left[\frac{\mathbb{E}\left[(2\mu_1 P_1- d_1 P_1)\big|X_1\right]}{\mu_1^2}\right]
    +o(1).
\end{align}
Next, we evaluate the expectation in (\ref{lem4poofeq2}).
Firstly, we calculate the conditional expectation. Note that $\mu_1=\mathbb{E}[d_1|X_1]=(n-1)\mathbb{E}\left[A_{12}\mid X_1\right]$. Direct calculation yields:
\begin{align}\label{revneweq1}
    d_1P_1
    &=  \sum_{j \ne k \ne l \ne 1}
    T_{1jk}A_{1l}
    + \sum_{j \ne k \ne 1}
    T_{1jk}-\sum_{j\ne k \ne 1}S_{1jk},\\ \label{revneweq2}
    2\mu_1 P_1 
    &= 2(n-1)\mathbb{E}\left[A_{12}\mid X_1\right]\sum_{j\ne k \ne 1}
    T_{1jk}.
\end{align}
For \(j\ne k\ne l\ne 1\), the terms \(T_{1jk}=A_{1j}A_{jk}-A_{1j}A_{1k}\) and \(A_{1l}\) are conditionally independent given \(X_{1}\).
Taking the expectation of both sides of the previous two equations, conditioned on \(X_{1}\), yields
 \begin{align*}   
    2\mathbb{E}\left[\mu_1 P_1 \mid X_1\right]
    &= 
    2 (n-1)^2(n-2)\big(\mathbb{E}\left[A_{13}A_{34}\mid X_1\right]
    - \mathbb{E}\left[A_{13}A_{14}\mid X_1\right]\big)\mathbb{E}\left[A_{12}\mid X_1\right],\\
    \mathbb{E}\left[d_1 P_1 \mid X_1\right]
    &= 
    (n-1)(n-2)(n-3) \big(\mathbb{E}\left[A_{13}A_{34}\mid X_1\right]-\mathbb{E}\left[A_{13}A_{14}\mid X_1\right]\big)\mathbb{E}\left[A_{12}\mid X_1\right]\\
    &+
    (n-1)(n-2)\big(\mathbb{E}\left[A_{13}A_{34}\mid X_1\right]-\mathbb{E}\left[A_{13}A_{14}\mid X_1\right]\big)
    \\
    &- (n-1)(n-2)\big( \mathbb{E}\left[A_{13}A_{14}\mid X_1\right]-\mathbb{E}\left[A_{13}A_{14}A_{12}\mid X_1\right]
    \big).
\end{align*}
Then the conditional expectation $\mathbb{E}\left[(2\mu_1 P_1- d_1 P_1)|X_1\right]$ is equal to
\begin{align*} 
    &\mathbb{E}\left[(2\mu_1 P_1- d_1 P_1)|X_1\right]\\
    &= (n+1)(n-1)(n-2) \big(\mathbb{E}\left[A_{13}A_{34}\mid X_1\right]-\mathbb{E}\left[A_{13}A_{14}\mid X_1\right]\big)\mathbb{E}\left[A_{12}\mid X_1\right]\\
    &-
    (n-1)(n-2)\big(\mathbb{E}\left[A_{13}A_{34}\mid X_1\right]-\mathbb{E}\left[A_{13}A_{14}\mid X_1\right]\big)
    \\
    &+ (n-1)(n-2)\big( \mathbb{E}\left[A_{13}A_{14}\mid X_1\right]-\mathbb{E}\left[A_{13}A_{14}A_{12}\mid X_1\right]
    \big).
\end{align*}
By equation (\ref{prope1}) and equation (\ref{prope01}) of Lemma \ref{propmain}, the second term of the foregoing expression is  equal to
\begin{align*}
(n-1)(n-2)\big(\mathbb{E}\left[A_{13}A_{34}\mid X_1\right]-\mathbb{E}\left[A_{13}A_{14}\mid X_1\right]\big)&=O(n^2r_n^4),
\end{align*}
where the upper bound $O(n^2r_n^4)$ does not depend on $X_1$. According to (\ref{prope0}) of Lemma \ref{propmain}, \(\mu _{1}=2(n-1)r_{n}f(X_{1})+O(r_{n}^{3})\). Recall that \(r_{n}=o(1)\) and \(f(x)\) is bounded   away from zero. It then follows that
\begin{align*}
    \frac{\mathbb{E}\left[(2\mu_1 P_1 - d_1P_1)\big|X_1\right]}{\mu_1^2}
    &=n\frac{
    \mathbb{E}\left[A_{13}A_{34}\mid X_1\right]
    -\mathbb{E}\left[A_{13}A_{14}\mid X_1\right]}
    {\mathbb{E}\left[A_{12}\mid X_1\right]}
    \\
    &\quad +\frac{
    \mathbb{E}\left[A_{13}A_{14}\mid X_1\right]
    -\mathbb{E}\left[A_{12}A_{23}A_{31}\mid X_1\right]}
    {\big(\mathbb{E}\left[A_{12}\mid X_1\right]\big)^2}
    +o(1).
\end{align*}
In view of (\ref{lem4poofeq2}), we have
   \begin{align}\nonumber
       \mathbb{E}\left[\Delta_1\right]
    &=n\mathbb{E}\left[\frac{\mathbb{E}[A_{12}A_{23}|X_1]-\mathbb{E}[A_{12}A_{13}|X_1]}{\mathbb{E}[A_{12}|X_1]}\right]\\ \label{lem4poofeq3}
    &\quad+\mathbb{E}\left[\frac{\mathbb{E}[A_{12}A_{13}|X_1]-\mathbb{E}[A_{12}A_{13}A_{23}|X_1]}{\big(\mathbb{E}[A_{12}|X_1]\big)^2}\right]+o(1).
\end{align} 

Next, we will evaluate the two expectations in (\ref{lem4poofeq3}) below. By Assumption \ref{assumptiona}, $r_n=o(1)$, and \(f(x)\) is bounded away from zero and has a bounded fourth derivative. Thus, \(f(x)\ge c>0\) for some constant \(c\), while $f(X_1)$, \((f^{\prime }(X_{1}))^{2}\) and \(f(X_{1})f^{\prime \prime }(X_{1})\) are  bounded. It then follows from equation (\ref{prope0}), equation (\ref{prope1}), equation (\ref{prope01}) and  equation (\ref{prope3}) of Lemma \ref{propmain} that
\begin{align}\nonumber
\mathbb{E}\left[
    \frac{
    \mathbb{E}\left[A_{13}A_{14}\mid X_1\right]
    -\mathbb{E}\left[A_{12}A_{23}A_{31}\mid X_1\right]}
    {\big(\mathbb{E}\left[A_{12}\mid X_1\right]\big)^2}\right]
    &= \int_0^1
    \frac
    {r_n^2f^2(x_1)+O(r_n^4)}
    {4r_n^2f^2(x_1)+O(r_n^4)}f(x_1)dx_1
    +o(1)\\ \label{lem4poofeq4}
    &= \frac{1}{4} + o(1),
\end{align}
and
\begin{align}\nonumber
    \mathbb{E}\left[\frac{\mathbb{E}[A_{12}A_{23}|X_1]-\mathbb{E}[A_{12}A_{13}|X_1]}{\mathbb{E}[A_{12}|X_1]}\right]&=\int_0^1\frac{\frac{r_n^4}{3}\big[4(f^{\prime}(x))^2+2f(x)f^{\prime\prime}(x)\big]+O(r_n^6)}{2r_nf(x)+O(r_n^3)}f(x)dx\\\label{lem4poofeq5}
    &=\frac{r_n^3}{3}\int_0^1  \big[2(f^{\prime}(x))^2+f(x)f^{\prime\prime}(x)\big] dx+o(1).
\end{align}
By integration by part, we get
\begin{align*}
 \int_0^1f(x)f^{\prime\prime}(x)dx
    =f(x)f^{\prime}(x)\big|_0^1-\int_0^1\left(f^{\prime}(x)\right)^2dx.
\end{align*}
By Assumption \ref{assumptiona}, $f(x)=g(x)I[0\leq x\leq 1]$ with \(g(x)\)  satisfying \(g(x+1)=g(x)=g(x-1)\) for all \(x\in \mathbb{R}\). Note that \(g'(x+1)=g'(x)=g'(x-1)\). Then $f(1)=g(1)=g(0)=f(0)$ and $f'(1)=g'(1)=g'(0)=f'(0)$. Then $f(x)f^{\prime}(x)\big|_0^1=0$, and
\begin{align*}
 \int_0^1f(x)f^{\prime\prime}(x)dx
    =-\int_0^1\left(f^{\prime}(x)\right)^2dx.
\end{align*}
It then follows from (\ref{lem4poofeq5}) that
\begin{align}\label{lem4poofeq6}
    \mathbb{E}\left[\frac{\mathbb{E}[A_{12}A_{23}|X_1]-\mathbb{E}[A_{12}A_{13}|X_1]}{\mathbb{E}[A_{12}|X_1]}\right]=\frac{r_n^3}{3}\int_0^1  (f^{\prime}(x))^2dx+o(1).
\end{align}

Combining (\ref{lem4poofeq3}) through (\ref{lem4poofeq6}) completes the proof of Proposition \ref{lem4}.

\qed

\section{The  variance of the friendship paradox }\label{fdvar}

In this section, we compute the second moment of \(\Delta _{1}\) and the cross-moment \(\mathbb{E}[\Delta _{1}\Delta _{2}]\) to characterize their asymptotic behavior. These quantities are essential for establishing the asymptotic order of the variance for the friendship paradox. This derivation is computationally intensive and requires careful treatment, as demonstrated in the proofs of Lemmas \ref{lem1} through \ref{lemnew3}.

\begin{Proposition}\label{lem5}
   Under Assumption \ref{assumptiona}, we have
\begin{align*}    \mathbb{E}\left[\Delta_1\Delta_2\right]+\frac{\mathbb{E}\left[\Delta_1^2\right]-\mathbb{E}\left[\Delta_1\Delta_2\right]}{n}=\big(\mathbb{E}\left[\Delta_1\right]\big)^2(1+o(1)).
\end{align*}
\end{Proposition}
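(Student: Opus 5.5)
We follow the strategy of Proposition \ref{lem4} and replace each denominator $d_t$ by $\mu_t=\mathbb{E}[d_t|X_t]$. Using the identity (\ref{lem4poofeq1}) for $t=1,2$, we write $\Delta_t=M_t+E_t$, where
\[
M_t=\frac{2\mu_tP_t-d_tP_t}{\mu_t^2},\qquad E_t=\frac{P_t(d_t-\mu_t)^2}{d_t\mu_t^2}I[d_t\geq1].
\]

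\textbf{Step 1 (the $1/n$ term).} It suffices to show $\mathbb{E}[\Delta_1^2]\leq \mathbb{E}[M_1^2]+O(\mathbb{E}[E_1^2])$. Lemma \ref{lem3}, (\ref{fieq7}), gives $\mathbb{E}[E_1^2]=o(1)$. For $\mathbb{E}[M_1^2]$ we use $|M_1|\le |P_1|/\mu_1+|P_1(d_1-\mu_1)|/\mu_1^2$ and Cauchy--Schwarz with Lemmas \ref{lnem3} and \ref{lem1}. Since $\mu_1\asymp nr_n$, this yields
\[
\mathbb{E}[\Delta_1^2]=O\!\left(n^2r_n^6+\frac{1}{nr_n}+1\right).
\]
Proposition \ref{lem4} gives $\mathbb{E}[\Delta_1]\geq 1/4+o(1)$ and $(\mathbb{E}[\Delta_1])^2\asymp (nr_n^3)^2+1$. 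Hence $\mathbb{E}[\Delta_1^2]/n=o\big((\mathbb{E}[\Delta_1])^2\big)$. Once Step 2 gives $\mathbb{E}[\Delta_1\Delta_2]=O((\mathbb{E}\Delta_1)^2)$, the term $\mathbb{E}[\Delta_1\Delta_2]/n$ is negligible as well.

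\textbf{Step 2 (the cross moment).} We expand
\[
\mathbb{E}[\Delta_1\Delta_2]=\mathbb{E}[M_1M_2]+\mathbb{E}[M_1E_2]+\mathbb{E}[E_1M_2]+\mathbb{E}[E_1E_2].
\]
The error terms are handled by Cauchy--Schwarz. The factor $\mathbb{E}[E_t^2]=o(1)$ comes from (\ref{fieq7}). The factor $\mathbb{E}[M_t^2]$ is bounded as in Step 1. The resulting product is $o\big((\mathbb{E}\Delta_1)^2\big)$; if this is not sharp enough in the dense regime, we instead use (\ref{fieq6}) together with the fourth moment (\ref{fieq2}).

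For the main term, decompose $d_tP_t$ as in (\ref{revneweq1}), centring $A_{tl}$ through $\bar A_{tl}$. The centred piece produces $R_t$, and the piece with $\mathbb{E}[A_{tl}|X_t]$ combines with $2\mu_tP_t$. This gives
\[
2\mu_tP_t-d_tP_t=(n+1)\,\mathbb{E}[A_{12}|X_t]\,P_t-P_t+Q_t-R_t
\]
up to index-count corrections of lower order. Here $\mathbb{E}[A_{12}|X_2]$ is understood with $X_2$ substituted. Multiplying the two expansions gives a sum of products of $P$, $Q$, $R$ with deterministic-in-$X$ weights.

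\textbf{Step 3 (matching the square of the mean).} Each product is evaluated by an earlier lemma:
\begin{itemize}[leftmargin=*]
\item $(n+1)^2\mathbb{E}[A\,|\,X_1]\mathbb{E}[A\,|\,X_2]P_1P_2$ by (\ref{franeq3});
\item $Q_1Q_2$ by Lemma \ref{lemqq};
\item the $PQ$ cross terms by Lemma \ref{lempq};
\item the remaining $P$--$P$ terms by (\ref{franeq1}) and (\ref{franeq2});
\item every term containing $R$ by Lemmas \ref{lemnew1} and \ref{lemnew3}.
\end{itemize}
The leading terms assemble into
\[
\Big(n\,\mathbb{E}\big[\tfrac{\mathbb{E}[A_{13}A_{34}-A_{13}A_{14}|X_1]}{\mathbb{E}[A_{12}|X_1]}\big]+\mathbb{E}\big[\tfrac{\mathbb{E}[A_{12}A_{13}-A_{12}A_{13}A_{23}|X_1]}{(\mathbb{E}[A_{12}|X_1])^2}\big]\Big)^2 ,
\]
which by (\ref{lem4poofeq3})--(\ref{lem4poofeq6}) equals $(\mathbb{E}[\Delta_1])^2+o\big((\mathbb{E}\Delta_1)^2\big)$. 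The lemmas then leave errors of the form
\[
O\!\left(nr_n^4+nr_n^5+r_n+\frac{1}{nr_n}\right).
\]

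\textbf{Main obstacle.} The hardest step is showing these errors are $o\big((\mathbb{E}\Delta_1)^2\big)$ uniformly across regimes. The term $nr_n^4$ is not $o(1)$ when $nr_n^3\to\infty$. In that regime, however, $(\mathbb{E}\Delta_1)^2\asymp n^2r_n^6$ if $\int (f')^2>0$, and $nr_n^4/(n^2r_n^6)=1/(nr_n^2)\to0$. In the uniform case $f'\equiv0$, the $r_n^4$ corrections vanish identically by Lemma \ref{propmain}, so that term does not arise. The regime split therefore needs care: we treat $\int(f')^2=0$ and $\int(f')^2>0$ separately and, in the latter case, compare each error term with $\max\{1,n^2r_n^6\}$.
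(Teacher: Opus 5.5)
Your outline follows the paper's proof essentially line by line: the same split $\Delta_t=M_t+E_t$, the same $P,Q,R$ identity, and the same lemmas for the main term. However, it has a genuine gap. The bound on $\mathbb{E}[M_1^2]$ is wrong, and the Cauchy--Schwarz treatment of the mixed terms in Step~2 fails with it.

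From $|M_1|\le|P_1|/\mu_1+|P_1(d_1-\mu_1)|/\mu_1^2$ and (\ref{fieq1}), you get $\mathbb{E}[P_1^2]/\mu_1^2=O(n^2r_n^6+nr_n+1)$. The middle term is $nr_n$, not $1/(nr_n)$, and it is genuinely present. For $f\equiv1$, every index pattern in $\mathbb{E}[P_1^2]$ contributes zero or a nonnegative amount. The pattern $k=k_1$, $j\neq j_1$ gives $\mathbb{E}[T_{1jk}T_{1j_1k}\mid X_1]=\frac43r_n^3$. Hence $\mathbb{E}[P_1^2]\sim\frac43n^3r_n^3$ and $\mathbb{E}[M_1^2]\asymp nr_n\to\infty$; that is, $\Delta_1$ fluctuates on the scale $\sqrt{nr_n}$.

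Step~1 survives, since dividing by $n$ still leaves $O(r_n+nr_n^6+1/n)$. Step~2 does not. The bound $|\mathbb{E}[E_1M_2]|\le(\mathbb{E}[E_1^2]\,\mathbb{E}[M_2^2])^{1/2}$ gives, with (\ref{fieq7}), only $o(\sqrt{nr_n}+nr_n^3+1)$. Even the true order $\mathbb{E}[E_1^2]\asymp(nr_n)^{-1}$ (note $E_1=\Delta_1(d_1-\mu_1)^2/\mu_1^2$) gives only $O(1)$. This is not $o((\mathbb{E}\Delta_1)^2)$ when $nr_n^3=O(1)$ or $f$ is uniform, because there $(\mathbb{E}\Delta_1)^2=\Theta(1)$.

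So the failure is in the sparse regime, not the dense one your fallback addresses. H\"older with (\ref{fieq6}) and (\ref{fieq2}) cannot repair it, because the obstruction is the product of typical sizes $(nr_n)^{-1/2}\cdot\sqrt{nr_n}$, not a tail effect. Following the paper does not rescue this step either. Its display (\ref{tueseq1}) asserts $O(1+nr_n^3+n^2r_n^6)$, but (\ref{fieq1}) yields $nr_n$ in place of $nr_n^3$. The paper's bounds $o(\sqrt{1+nr_n^3+n^2r_n^6})$ for the mixed terms rest on that slip.

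What is missing is an argument that exploits the approximate independence of the node-$1$ and node-$2$ quantities in these mixed terms, instead of a bare Cauchy--Schwarz. One workable route is to expand one order further:
\[
\frac{1}{d_t}=\frac{1}{\mu_t}-\frac{d_t-\mu_t}{\mu_t^2}+\frac{(d_t-\mu_t)^2}{\mu_t^3}-\frac{(d_t-\mu_t)^3}{d_t\mu_t^3}.
\]
The new remainder $P_t(d_t-\mu_t)^3/(d_t\mu_t^3)$ has $L^2$ norm $o((nr_n)^{-1/2})$ (heuristically $(nr_n)^{-1}$); proving this needs a twelfth-moment analogue of Lemma \ref{lnem3}. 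With that, Cauchy--Schwarz against $\|\Delta_{3-t}\|_2=O(\sqrt{nr_n}+nr_n^3+1)$ now suffices.

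The added polynomial term $N_t=P_t(d_t-\mu_t)^2/\mu_t^3$ is paired with $M_{3-t}$ and $N_{3-t}$ by the same index expansions as in Lemmas \ref{lemnew1}--\ref{lempq}. Disjoint index sets factor given $(X_1,X_2)$, and $\mathbb{E}[N_1]=O(r_n^2+(nr_n)^{-1})$.

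Separately, you are right that uniform $f$ with $nr_n^3\to\infty$ needs its own treatment; there the paper's claim $(\mathbb{E}\mathcal{F}_n)^2=\Theta(1+nr_n^3+n^2r_n^6)$ is false. But the vanishing of the $O(nr_n^4)$, $O(nr_n^5)$ and $O(n^2r_n^6)$ errors does not follow from the statements of the lemmas. You must check in their proofs that each such contribution contains a factor of the form $\mathbb{E}[A_{jk}-A_{ik}\mid X_i,X_j]$, which is identically zero when $f$ is uniform.
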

Proposition \ref{lem5} provides the asymptotic order of the second moment of the friendship paradox. Combined with Proposition \ref{lem4}, it establishes the weak law of large numbers, as demonstrated in the subsequent section.

{\bf Proof of Proposition \ref{lem5}:} 
We prove Proposition \ref{lem5} in two steps: first, we bound \(\mathbb{E}[\Delta _{1}^{2}]\), and then we compute \(\mathbb{E}[\Delta _{1}\Delta _{2}]\).

{ \bf Step 1: bound \(\mathbb{E}[\Delta _{1}^{2}]\).}  We provide an upper bound for \(\mathbb{E}[\Delta _{1}^{2}]\). By Definition \ref{def0}, if $d_1=0$, then $\Delta_1=0$. 
Then $\Delta_1^2=\Delta_1^2I[d_1\geq1]$. We will bound $\mathbb{E}[\Delta_1^2I[d_1\geq1]]$.
In view of (\ref{revneweq1}) and (\ref{revneweq2}),
it is easy to verify that
\begin{align}\nonumber
     2\mu_1 P_1-d_1P_1
    &= (n+1)\mathbb{E}\left[A_{12}\mid X_1\right]\sum_{j\ne k \ne 1}
  T_{1jk}
    +\sum_{j\ne k \ne 1} S_{1jk}-\sum_{j\ne k \ne 1}T_{1jk}-\sum_{j \ne k \ne l \ne 1}
    T_{1jk}\bar{A}_{1l}\\ \label{lem5eqpoof1}
    &=(n+1)\mathbb{E}\left[A_{12}\mid X_1\right]P_1+Q_1-P_1-R_1.
\end{align}
Then, we have
\begin{align}\nonumber
&\mathbb{E}\left[\left(\frac{2\mu_1P_1-d_1 P_1}{\mu_1^2}\right)^2\Bigg|X_1\right]\\ \label{lem5eqpoof2}
&\leq 4\frac{(n+1)^2\big(\mathbb{E}\left[A_{12}\mid X_1\right]\big)^2\mathbb{E}[P_1^2|X_1]+\mathbb{E}[Q_1^2|X_1]+\mathbb{E}[P_1^2|X_1]+\mathbb{E}[R_1^2|X_1]}{\mu_1^4}.
\end{align}
When $d_1=0$, $A_{1j}=0$ for all $j\in\{1,2,\dots,n\}$. In this case, $P_1=d_1=0$.
By Lemma \ref{lem1}, Lemma \ref{addlem1}, and Lemma \ref{adlemnew1}, it follows that
\begin{align}\label{tueseq1}
\mathbb{E}\left[\left(\frac{2\mu_1P_1-d_1 P_1}{\mu_1^2}\right)^2I[d_1\geq1]\Bigg|X_1\right]=\mathbb{E}\left[\left(\frac{2\mu_1P_1-d_1 P_1}{\mu_1^2}\right)^2\Bigg|X_1\right]=O(1+nr_n^3+n^2r_n^6).
\end{align}
Combining (\ref{lem4poofeq1}), (\ref{tueseq1}) and Lemma \ref{lem3} yields
\begin{align*}
  \mathbb{E}[\Delta_1^2]&=\mathbb{E}\big[\mathbb{E}[\Delta_1^2|X_1]\big]\\
  &\leq 2\mathbb{E}\left[\mathbb{E}\left[\left(\frac{2\mu_1P_1-d_1 P_1}{\mu_1^2}\right)^2I[d_1\geq1]\Bigg|X_1\right]\right]+2\mathbb{E}\left[\left(\frac{P_1 (d_1-\mu_1)^2}{d_1\mu_1^2}\right)^2I[d_1\geq1]\right]\\
&=O(1+nr_n^3+n^2r_n^6).
\end{align*}
By Proposition \ref{lem4}, $(\mathbb{E}[\mathcal{F}_n])^2=\Theta(1+nr_n^3+n^2r_n^6)$. 
It then follows that
\begin{align}\label{lem5eqpoof13}
  \frac{\mathbb{E}[\Delta_1^2]}{n}=O\left(\frac{1}{n}+r_n^3+nr_n^6\right)=o(  (\mathbb{E}[\mathcal{F}_n])^2).
 \end{align}

{\bf Step 2: find $\mathbb{E}\left[\Delta_1\Delta_2\right]$.} 
Next, we find an asymptotic expression for \(\mathbb{E}[\Delta _{1}\Delta _{2}]\). By Definition \ref{def0}, if $d_i=0$, then $\Delta_i=0$. 
Hence, $\Delta_1\Delta_2=\Delta_1\Delta_2I[d_1\geq1]I[d_2\geq1]$. In addition,
if $d_i=0$, then $A_{ij}=0$ for all $j\in\{1,2,\dots,n\}$. In this case, $P_i=d_i=0$.
By (\ref{lem4poofeq1}),  \(\Delta _{1}\Delta _{2}\) is equal to
\begin{align*}
    \Delta_1\Delta_2&=\frac{2\mu_1P_1-d_1 P_1}{\mu_1^2}\frac{2\mu_2P_2-d_2 P_2}{\mu_2^2}
    +\frac{P_1 (d_1-\mu_1)^2}{d_1\mu_1^2}\frac{P_2 (d_2-\mu_2)^2}{d_2\mu_2^2}I[d_1\geq1]I[d_2\geq1]\\
    &\quad +\frac{P_1 (d_1-\mu_1)^2}{d_1\mu_1^2}\frac{2\mu_2P_2-d_2 P_2}{\mu_2^2}I[d_1\geq1]+\frac{2\mu_1P_1-d_1 P_1}{\mu_1^2}\frac{P_2 (d_2-\mu_2)^2}{d_2\mu_2^2}I[d_2\geq1].
\end{align*}
By the Cauchy-Schwarz inequality, equation (\ref{tueseq1}) and Lemma \ref{lem3}, we have
\begin{align*}
&\left|\mathbb{E}\left[\frac{P_1 (d_1-\mu_1)^2}{d_1\mu_1^2}\frac{P_2 (d_2-\mu_2)^2}{d_2\mu_1^2}I[d_1\geq1]I[d_2\geq1]\right]\right|\\
&\leq \sqrt{\mathbb{E}\left[\left(\frac{P_1 (d_1-\mu_1)^2}{d_1\mu_1^2}\right)^2I[d_1\geq1]\right]\mathbb{E}\left[\left(\frac{P_2 (d_2-\mu_2)^2}{d_2\mu_1^2}\right)^2I[d_2\geq1]\right]}\\
&=o(1),
\end{align*}
\begin{align*}
&\left|\mathbb{E}\left[\frac{P_1 (d_1-\mu_1)^2}{d_1\mu_1^2}\frac{2\mu_2P_2-d_2 P_2}{\mu_2^2}I[d_1\geq1]\right]\right|\\
&\leq \sqrt{\mathbb{E}\left[\left(\frac{P_1 (d_1-\mu_1)^2}{d_1\mu_1^2}\right)^2I[d_1\geq1]\right]\mathbb{E}\left[\left(\frac{2\mu_2P_2-d_2 P_2}{\mu_2^2}\right)^2\right]}\\
&=o(\sqrt{1+nr_n^3+n^2r_n^6}),
\end{align*}
\begin{align*}
&\left|\mathbb{E}\left[\frac{2\mu_1P_1-d_1 P_1}{\mu_1^2}\frac{P_2 (d_2-\mu_2)^2}{d_2\mu_2^2}I[d_2\geq1]\right]\right|\\
&\leq \sqrt{\mathbb{E}\left[\left(\frac{2\mu_1P_1-d_1 P_1}{\mu_1^2}\right)^2\right]\mathbb{E}\left[\left(\frac{P_2 (d_2-\mu_2)^2}{d_2\mu_2^2}\right)^2I[d_2\geq1]\right]}\\
&=o(\sqrt{1+nr_n^3+n^2r_n^6}).
\end{align*}
Then it follows that
\begin{align} \label{tueseq2}   \mathbb{E}\left[\Delta_1\Delta_2\right]&=\mathbb{E}\left[\frac{2\mu_1P_1-d_1 P_1}{\mu_1^2}\frac{2\mu_2P_2-d_2 P_2}{\mu_2^2}\right]
    +o(\sqrt{1+nr_n^3+n^2r_n^6}).
\end{align}

Next, we find the leading term of the expectation in (\ref{tueseq2}). Recall in (\ref{lem5eqpoof1}) that
\begin{align*}
     2\mu_t P_t-d_tP_t&=(n+1)\mathbb{E}\left[A_{12}\mid X_t\right]P_t+Q_t-P_t-R_t,\ \ \ \ \ \ t\in\{1,2\}.
\end{align*}
Then straightforward calculation yields
\begin{align}\nonumber
&\frac{2\mu_1P_1-d_1 P_1}{\mu_1^2}\frac{2\mu_2P_2-d_2 P_2}{\mu_2^2}\\ \nonumber
&=\frac{(n+1)^2\mathbb{E}\left[A_{12}\mid X_1\right]\mathbb{E}\left[A_{12}\mid X_2\right]P_1P_2}{\mu_1^2\mu_2^2}+\frac{Q_1Q_2}{\mu_1^2\mu_2^2}\\ \nonumber
&\quad+\frac{(n+1)\mathbb{E}\left[A_{12}\mid X_1\right]P_1Q_2}{\mu_1^2\mu_2^2}+\frac{(n+1)\mathbb{E}\left[A_{12}\mid X_2\right]P_2Q_1}{\mu_1^2\mu_2^2}\\ \nonumber
&\quad-\frac{(n+1)\mathbb{E}\left[A_{12}\mid X_1\right]}{\mu_1^2\mu_2^2}\big(P_1P_2+P_1R_2\big)-\frac{(n+1)\mathbb{E}\left[A_{12}\mid X_2\right]}{\mu_1^2\mu_2^2}\big(P_1P_2+P_2R_1\big)\\  \label{tueseq3} 
&\quad-\frac{Q_1P_2+Q_1R_2+Q_2P_1+Q_2R_1}{\mu_1^2\mu_2^2} +\frac{P_1P_2+P_1R_2+P_2R_1+R_2R_1}{\mu_1^2\mu_2^2}.
\end{align}
We first demonstrate that the last four terms of (\ref{tueseq3}) are of negligible order, then derive the asymptotic expressions for the remaining terms. By Lemma \ref{lemmapp},
 for $t\in\{1,2\}$, we have
\begin{align}\nonumber
\mathbb{E}\left[\frac{(n+1)\mathbb{E}\left[A_{12}\mid X_t\right]}{\mu_1^2\mu_2^2} P_1P_2\right]&=O\left(r_n^2+\frac{1}{nr_n}+nr_n^5\right),
\end{align}
\begin{align}\label{tueseq04}
\mathbb{E}\left[\frac{P_1P_2}{\mu_1^2\mu_2^2} \right]=O\left(\frac{1}{nr_n}+r_n\right).
\end{align}
 By Lemma \ref{lem1}, Lemma \ref{addlem1} and the Cauchy–Schwarz inequality and the assumption that $r_n=o(1)$ and $nr_n=\omega(1)$, we get
\begin{align}\label{frifineq1}
\mathbb{E}\left[\left|\frac{P_2Q_1}{\mu_1^2\mu_2^2} \right|\right]=O\left(\frac{1}{n^4r_n^4}\right)\mathbb{E}\left[|P_2Q_1|\right]=O\left(\frac{\sqrt{n^3r_n^5+n^2r_n^2+n^4r_n^8}}{n^2r_n^2}\right)=o(1),
\end{align}
\begin{align}\label{frifineq2}
\mathbb{E}\left[\left|\frac{P_1Q_2}{\mu_1^2\mu_2^2} \right|\right]=O\left(\frac{1}{n^4r_n^4}\right)\mathbb{E}\left[|P_1Q_2|\right]=O\left(\frac{\sqrt{n^3r_n^5+n^2r_n^2+n^4r_n^8}}{n^2r_n^2}\right)=o(1).
\end{align}
By Lemma \ref{lemnew1} and Lemma \ref{lemnew3},  we have
\begin{align}\label{frifineq03}
    \mathbb{E}\left[\frac{(n+1)\mathbb{E}\left[A_{12}\mid X_1\right]}{\mu_1^2\mu_2^2}P_1R_2\right]=    \mathbb{E}\left[\frac{(n+1)\mathbb{E}\left[A_{12}\mid X_2\right]}{\mu_1^2\mu_2^2}P_2R_1\right]=O\left(nr_n^4+r_n+\frac{1}{nr_n}\right),
\end{align}
\begin{align}\label{frifineq3}
\mathbb{E}\left[\frac{P_1P_2+P_1R_2+P_2R_1+R_2R_1}{\mu_1^2\mu_2^2}\right]=O\left(nr_n^5+r_n+\frac{1}{nr_n}\right).
\end{align}
\begin{align}\label{frifineq03}
\mathbb{E}\left[\frac{Q_1P_2+Q_1R_2+Q_2P_1+Q_2R_1}{\mu_1^2\mu_2^2} \right]=O\left(r_n+\frac{1}{nr_n}\right).
\end{align}

Accordint to Proposition \ref{lem4}, $(\mathbb{E}[\mathcal{F}_n])^2=\Theta(1+nr_n^3+n^2r_n^6)$. 
Combining (\ref{tueseq2})-(\ref{frifineq03}) yields
\begin{align*}   \mathbb{E}\left[\Delta_1\Delta_2\right]&=\mathbb{E}\left[\frac{(n+1)^2\mathbb{E}\left[A_{12}\mid X_1\right]\mathbb{E}\left[A_{12}\mid X_2\right]P_1P_2}{\mu_1^2\mu_2^2}\right]+\mathbb{E}\left[\frac{(n+1)\mathbb{E}\left[A_{12}\mid X_1\right]P_1Q_2}{\mu_1^2\mu_2^2}\right]\\
    &\quad+\mathbb{E}\left[\frac{Q_1Q_2}{\mu_1^2\mu_2^2}\right]+\mathbb{E}\left[\frac{(n+1)\mathbb{E}\left[A_{12}\mid X_2\right]P_2Q_1}{\mu_1^2\mu_2^2}\right]
    +o\big((\mathbb{E}[\mathcal{F}_n])^2\big).
\end{align*}
In view of Lemma \ref{lemnew3}, Lemma \ref{lemmapp} and Lemma \ref{lemqq}, we have
\begin{align*}   \mathbb{E}\left[\Delta_1\Delta_2\right]&=\left(n\mathbb{E}\left[\frac{\mathbb{E}\left[(A_{13}A_{34}
    -A_{13}A_{14})|X_1\right]}{\mathbb{E}\left[A_{12}\mid X_1\right]}\right]\right)^2+O\left(r_n+nr_n^4+\frac{1}{nr_n}\right)\\
    &+2n\mathbb{E}\left[\frac{\mathbb{E}[(A_{13}A_{34}-A_{13}A_{14})|X_1]}{\mathbb{E}\left[A_{12}\mid X_1\right]}\right]\mathbb{E}\left[\frac{\mathbb{E}[(A_{25}A_{26}-A_{25}A_{26}A_{56})|X_2]}{(\mathbb{E}\left[A_{12}\mid X_2\right])^2}\right]\\
    &+\left(\mathbb{E}\left[\frac{\mathbb{E}\left[\big(A_{12}A_{13}
    -A_{12}A_{13}A_{23}\big)|X_1\right]}{\big(\mathbb{E}[A_{12}|X_1]\big)^2}\right]\right)^2+o(\sqrt{1+nr_n^3+n^2r_n^6})\\
    &=\big(\mathbb{E}\left[\Delta_1\right]\big)^2+O\left(r_n+nr_n^4+\frac{1}{nr_n}\right)+o(\sqrt{1+nr_n^3+n^2r_n^6}).
\end{align*}

By (\ref{lem5eqpoof13}) and the fact that  $(\mathbb{E}[\Delta_1])^2=(\mathbb{E}[\mathcal{F}_n])^2=\Theta(1+nr_n^3+n^2r_n^6)$, we conclude that
\begin{align*}    \mathbb{E}\left[\Delta_1\Delta_2\right]+\frac{\mathbb{E}\left[\Delta_1^2\right]-\mathbb{E}\left[\Delta_1\Delta_2\right]}{n}=\big(\mathbb{E}\left[\Delta_1\right]\big)^2(1+o(1)).
\end{align*}

Then the proof of Proposition \ref{lem5} is complete.

\qed

\section{The WLLN for the friendship paradox }\label{wllnfp}
In this section, we build on the results of Proposition \ref{lem4} and Proposition \ref{lem5} to establish the weak law of large numbers for the friendship paradox via Markov’s inequality.

 \begin{Theorem}\label{mainthm}
Suppose Assumption \ref{assumptiona} holds. 
Then, for any positive constant $\epsilon$, we have
     \begin{equation}        \lim_{n\rightarrow\infty}\mathbb{P}\left(\left|\frac{\mathcal{F}_n}{\mathbb{E}[\mathcal{F}_n]}-1\right|>\epsilon\right)=0,
     \end{equation}
    where the expectation $\mathbb{E}[\mathcal{F}_n]$ has the following asymptotic expression:
     \begin{equation}\label{f0rrgg}
    \mathbb{E}[\mathcal{F}_n] = 
    \frac{nr_n^3}{3}\int_0^1  [f^{\prime}(x)]^2dx+\frac{1}{4}+o(1).
     \end{equation}
 \end{Theorem}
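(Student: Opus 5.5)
The plan is a second-moment (Chebyshev) argument applied to the ratio $\mathcal{F}_n/\mathbb{E}[\mathcal{F}_n]$. Formula (\ref{f0rrgg}) is exactly Proposition \ref{lem4}, so only the convergence in probability needs proof. First I note that $\mathbb{E}[\mathcal{F}_n]$ stays bounded away from zero: by Proposition \ref{lem4} it equals $\frac{nr_n^3}{3}\int_0^1[f'(x)]^2dx+\frac14+o(1)\ge \frac14+o(1)$. Hence the normalization is legitimate for all large $n$.

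Next I compute the second moment of $\mathcal{F}_n$ by exchangeability of the nodes. Since $X_1,\dots,X_n$ are i.i.d. and the RGG construction is invariant under relabeling,
\begin{align*}
\mathbb{E}[\mathcal{F}_n^2]=\frac{1}{n^2}\sum_{i,j}\mathbb{E}[\Delta_i\Delta_j]=\frac{1}{n}\mathbb{E}[\Delta_1^2]+\frac{n-1}{n}\mathbb{E}[\Delta_1\Delta_2]
=\mathbb{E}[\Delta_1\Delta_2]+\frac{\mathbb{E}[\Delta_1^2]-\mathbb{E}[\Delta_1\Delta_2]}{n}.
\end{align*}
By Proposition \ref{lem5} this equals $(\mathbb{E}[\Delta_1])^2(1+o(1))=(\mathbb{E}[\mathcal{F}_n])^2(1+o(1))$, using $\mathbb{E}[\mathcal{F}_n]=\mathbb{E}[\Delta_1]$. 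It follows that
\[
\mathrm{Var}(\mathcal{F}_n)=\mathbb{E}[\mathcal{F}_n^2]-(\mathbb{E}[\mathcal{F}_n])^2=o\big((\mathbb{E}[\mathcal{F}_n])^2\big).
\]

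Finally, Markov's inequality applied to $(\mathcal{F}_n-\mathbb{E}[\mathcal{F}_n])^2$ gives, for any fixed $\epsilon>0$,
\[
\mathbb{P}\left(\left|\frac{\mathcal{F}_n}{\mathbb{E}[\mathcal{F}_n]}-1\right|>\epsilon\right)\le\frac{\mathrm{Var}(\mathcal{F}_n)}{\epsilon^2(\mathbb{E}[\mathcal{F}_n])^2}=o(1),
\]
which proves the theorem.

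The only delicate point is that Proposition \ref{lem5} is stated in relative form, $(1+o(1))$ multiplying $(\mathbb{E}[\Delta_1])^2$. This is exactly what is needed, and it requires $\mathbb{E}[\Delta_1]$ not to tend to zero, which the lower bound $\frac14+o(1)$ ensures. All the real difficulty lies in Propositions \ref{lem4} and \ref{lem5}, which we assume; the theorem itself then follows directly.
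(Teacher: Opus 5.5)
Your proposal is correct and follows the paper's proof essentially verbatim. Like the paper, you use the exchangeability identity for $\mathbb{E}[\mathcal{F}_n^2]$, apply Proposition \ref{lem5} to get $\mathrm{Var}(\mathcal{F}_n)=o\big((\mathbb{E}[\mathcal{F}_n])^2\big)$, conclude by Markov's inequality, and quote Proposition \ref{lem4} for the expression; your explicit check that $\mathbb{E}[\mathcal{F}_n]\ge \frac14+o(1)$, so that the normalized ratio is well defined, is a small detail the paper leaves implicit.
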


 {\bf Proof of Theorem \ref{mainthm}: }
Note that the second moment of $\mathcal{F}_n$ can be expressed as follows:
\begin{align*}   
\mathbb{E}\left[\mathcal{F}_n^2\right]&=\frac{1}{n^2}\sum_{i\neq j}\mathbb{E}\left[\Delta_i\Delta_j\right]+\frac{1}{n^2}\sum_{i}\mathbb{E}\left[\Delta_i^2\right]\\
&=\mathbb{E}\left[\Delta_1\Delta_2\right]+\frac{\mathbb{E}\left[\Delta_1^2\right]-\mathbb{E}\left[\Delta_1\Delta_2\right]}{n}.
\end{align*}
By Proposition \ref{lem5}, the variance of \(\mathcal{F}_{n}\) satisfies:
\begin{align*}   
\mathbb{E}[(\mathcal{F}_n-\mathbb{E}\left[\mathcal{F}_n\right])^2]=\mathbb{E}\left[\mathcal{F}_n^2\right]-\big(\mathbb{E}\left[\mathcal{F}_n\right]\big)^2=o\big(\big(\mathbb{E}\left[\mathcal{F}_n\right]\big)^2\big).
\end{align*}
Then, for any positive constant \(\epsilon \), it follows from Markov’s inequality that
\begin{align*}
\mathbb{P}\left(\left|\frac{\mathcal{F}_n}{\mathbb{E}[\mathcal{F}_n]}-1\right|>\epsilon\right)\leq \frac{\mathbb{E}[(\mathcal{F}_n-\mathbb{E}\left[\mathcal{F}_n\right])^2]}{\epsilon^2\big(\mathbb{E}\left[\mathcal{F}_n\right]\big)^2}=o(1).
\end{align*}
Proposition \ref{lem4} yields the expression in (\ref{f0rrgg}). Then the proof of  Theorem \ref{mainthm} is complete. 

\qed

Based on Theorem \ref{mainthm}, the friendship paradox $\mathcal{F}_n$ converges in probability to its expectation. Theorem \ref{mainthm} illustrates the notable difference in properties between uniform and non-uniform RGGs. For the uniform RGGs $\mathcal{G}_{n}(f,r_n)$ where $f(x)$ is equal to the constant 1, the  friendship paradox is asymptotically equal to $\frac{1}{4}$, that is,
\[\mathcal{F}_n=\frac{1}{4}+o_P(1).\]
In contrast, 
for the nonuniform RGGs $\mathcal{G}_{n}(f,r_n)$, the friendship paradox $\mathcal{F}_n$ exhibits distinct behaviors that depend on the sparsity of the graph. In the relatively sparse regime where $nr_n^3=o(1)$, the friendship paradox remains asymptotically equivalent to $\frac{1}{4}$, mirroring the behavior of the uniform case. In the intermediate sparse regime where $nr_n^3\rightarrow\lambda$ for some constant $\lambda>0$, the friendship paradox  converges to $\frac{1}{4}$ plus a constant that depends on the derivative of $f(x)$ as follows:
    \begin{equation*} 
    \mathcal{F}_n = 
    \frac{\lambda}{3}\int_0^1\left[f^{\prime}(x)\right]^2dx+\frac{1}{4}+o_P(1).
     \end{equation*}
If $f(x)$ is not a constant, this limit differs from  $\frac{1}{4}$, the value of the relatively sparse case or uniform case. Within the relatively dense regime  $nr_n^3=\omega(1)$, the friendship paradox is no longer bounded, but instead exhibits growth of order  $nr_n^3$.

Assumption \ref{assumptiona} is non-restrictive; they are satisfied by the uniform density and the widely-used von Mises distribution. Recall that the von Mises densities  on $[0,1]$ is given by
    \begin{equation}\label{vonMises}
    f(x)=g(x)I[0\leq x\leq 1],\hskip 1cm g(x)=\frac{e^{\kappa \cos(2\pi x-\mu)}}{\mathcal{I}_0},
\end{equation}
    where $\kappa\geq0$, $\mu\in[0,1]$, and
    \[\mathcal{I}_0=\frac{1}{2\pi}\int_0^{2\pi}e^{\kappa \cos(x)}dx.\]
    When $\kappa=0$, the density reduces to the uniform distribution on $[0,1]$. As $\kappa$ increases, the distribution becomes increasingly concentrated around the mean $\mu$.   For large $\kappa$, it asymptotically approaches the normal distribution with mean $\mu$ and variance $\frac{1}{\kappa}$.

The function $g(x)$ in (\ref{vonMises}) is smooth.  Its derivatives are bounded and continuous to any arbitrary order.  Furthermore, it is strictly lower-bounded by the constant $(e^{\kappa}I_0(\kappa))^{-1}$,  guaranteeing that the density remains positive across the domain.  Given that the cosine function is periodic with period \(2\pi \), the function $g(x)$ satisfies the boundary conditions $g(x+1)=g(x-1)=g(x)$ for all $x\in\mathbb{R}$.   The von Mises distribution satisfies all the conditions in Assumption \ref{assumptiona}.

    By specializing Theorem \ref{mainthm} to the von Mises distribution, we obtain the following corollary:

\begin{Corollary}
    Suppose $f(x)$ is the von Mises density given in (\ref{vonMises}), and Assumption \ref{assumptiona} holds. Then
         \begin{equation*}
   \mathcal{F}_n = 
    nr_n^3\tau_f+\frac{1}{4}+o_P(1).
     \end{equation*}
     where 
     \begin{equation}\label{cfrrgg}
     \tau_f=\frac{4\pi^2\kappa^2}{3\mathcal{I}_0^2}\int_0^1e^{2\kappa \cos(2\pi x-\mu)}\sin^2(2\pi x-\mu)dx.
     \end{equation}
\end{Corollary}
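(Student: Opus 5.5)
The proof has two parts. First I identify the constant $\tau_f$ by specializing (\ref{f0rrgg}) of Theorem \ref{mainthm}. Then I upgrade the relative law of large numbers in Theorem \ref{mainthm} to the additive $o_P(1)$ error written in the Corollary.

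\emph{Identifying $\tau_f$.} Differentiating (\ref{vonMises}) gives
\[
f'(x)=-\frac{2\pi\kappa}{\mathcal{I}_0}\sin(2\pi x-\mu)\,e^{\kappa\cos(2\pi x-\mu)},
\]
so
\[
\frac13\int_0^1[f'(x)]^2dx=\frac{4\pi^2\kappa^2}{3\mathcal{I}_0^2}\int_0^1 e^{2\kappa\cos(2\pi x-\mu)}\sin^2(2\pi x-\mu)\,dx=\tau_f .
\]
The discussion preceding the Corollary already verifies that the von Mises density satisfies Assumption \ref{assumptiona}. Proposition \ref{lem4} therefore gives $\mathbb{E}[\mathcal{F}_n]=nr_n^3\tau_f+\frac14+o(1)$. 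It remains to show $\mathcal{F}_n-\mathbb{E}[\mathcal{F}_n]=o_P(1)$.

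\emph{Bounded regime.} I argue along subsequences. On any subsequence with $nr_n^3=O(1)$, $\mathbb{E}[\mathcal{F}_n]$ is bounded. It is also bounded below by $1/4+o(1)$. Theorem \ref{mainthm} gives $\mathcal{F}_n/\mathbb{E}[\mathcal{F}_n]-1=o_P(1)$. Multiplying by the bounded quantity $\mathbb{E}[\mathcal{F}_n]$ yields $\mathcal{F}_n-\mathbb{E}[\mathcal{F}_n]=o_P(1)$, which is the Corollary on such subsequences. This covers the relatively sparse and intermediate regimes completely.

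\emph{Unbounded regime.} On subsequences with $nr_n^3\to\infty$, the relative statement alone is insufficient. I need $\mathrm{Var}(\mathcal{F}_n)=o(1)$ and an $o(1)$ error in the mean. I will re-run the proofs of Propositions \ref{lem4} and \ref{lem5}, keeping every remainder quantitative rather than absorbing it into $o((\mathbb{E}\mathcal{F}_n)^2)$.

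\begin{enumerate}
\item \textbf{Mean.} The Taylor remainder $O(r_n^6)/O(r_n)$ in (\ref{lem4poofeq5}), multiplied by $n$, contributes $O(nr_n^5)$.
\item \textbf{Variance.} Step 2 of Proposition \ref{lem5} gives
\[
\mathbb{E}[\Delta_1\Delta_2]-(\mathbb{E}\Delta_1)^2=O\!\left(r_n+nr_n^4+\tfrac1{nr_n}\right)
\]
plus the cross terms bounded through Lemma \ref{lem3}. Step 1 gives $\mathbb{E}[\Delta_1^2]/n=O(1/n+r_n^3+nr_n^6)$.
\item \textbf{Main obstacle: the Lemma \ref{lem3} cross terms.} The paper only bounds these as $o(\sqrt{1+nr_n^3+n^2r_n^6})$. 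I will replace the $o(1)$ in Lemma \ref{lem3} by an explicit rate. To do so I will use Lemma \ref{lnem3}, (\ref{fieq2}), and the Chernoff bound of Lemma \ref{lem2} on $\{d_1<\delta\lambda nr_n\}$. Together these should give
\[
\mathbb{E}\!\left[\frac{P_1^2(d_1-\mu_1)^4}{d_1^2\mu_1^4}\right]=O\!\left(\frac{1+n^2r_n^6}{nr_n}\right).
\]
The cross terms are then $O\big((1+nr_n^3)/\sqrt{nr_n}\big)$, and I must check this is small.
\end{enumerate}

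Collecting these, the additive error is $O(r_n+nr_n^4+nr_n^5+1/(nr_n)+nr_n^{5/2})$ plus $o(1)$ terms. This is $o(1)$ exactly when $nr_n^{5/2}\to0$ together with the other terms. If the rate from item 3 turns out too weak beyond that range, the additive statement would have to be restricted to it.

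Granted these quantitative bounds, Chebyshev's inequality gives $\mathcal{F}_n-\mathbb{E}[\mathcal{F}_n]=o_P(1)$ on the remaining subsequences. Combining all subsequences then yields the Corollary.
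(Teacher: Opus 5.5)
Your identification $\tau_f=\frac13\int_0^1[f'(x)]^2\,dx$ and your argument on subsequences with $nr_n^3=O(1)$ are exactly the paper's proof, which consists only of substituting this identity into Theorem~\ref{mainthm}. You are also right that this substitution does not give an additive $o_P(1)$ error when $nr_n^3\to\infty$. In that regime Theorem~\ref{mainthm} only yields $\mathcal{F}_n=(nr_n^3\tau_f+\frac14)(1+o_P(1))$, and the paper passes to the additive form without justification.

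The gap is your treatment of that regime, and it cannot be closed along the lines you propose.

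\emph{Your bookkeeping.} Your collected error contains $nr_n^{5/2}$, and $nr_n^{5/2}\ge nr_n^3\to\infty$ because $r_n\le\frac12$. So the range where you claim success is empty as written. If you meant $\sqrt{n}\,r_n^{5/2}$ (which is what $(1+nr_n^3)/\sqrt{nr_n}$ gives), the range is non-empty, but the next two points still apply.

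\emph{The obstructions are genuine.} The mean carries the remainder $n\cdot O(r_n^5)=O(nr_n^5)$ from (\ref{lem4poofeq5}), as you note in item~1. Step~1 bounds $\mathbb{E}[\Delta_1^2]/n$ only by a quantity containing $nr_n^6$. For $r_n=n^{-1/10}$, which satisfies Assumption~\ref{assumptiona}, both diverge. Heuristically the variance really is that large. The first-order Hoeffding projection of $\mathcal{F}_n$ is a sum of $n$ i.i.d.\ terms of size $r_n^3$ that depend on $X_i$ through $f''(X_i)$. Since $f''$ is non-constant for $\kappa>0$, this projection has standard deviation of order $\sqrt{n}\,r_n^3$.

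\emph{The Cauchy--Schwarz route of your item~3 cannot give $o(1)$, even on a subrange.} The $n^3r_n^3$ term in (\ref{fieq1}) makes $\mathbb{E}\big[\big((2\mu_1P_1-d_1P_1)/\mu_1^2\big)^2\big]$ of order $nr_n$, not $nr_n^3$ as printed in (\ref{tueseq1}). Meanwhile (\ref{fieq7}) is at least of order $1/(nr_n)$. The reason is that the points at distance between $r_n$ and $2r_n$ from $X_1$ give $\Delta_1$ a component of variance of order $nr_n$ that is essentially independent of $d_1$. The product under the square root is therefore at least of order one, however sharp a rate you prove for (\ref{fieq7}). The printed Step~2 of Proposition~\ref{lem5} relies on the same estimate.

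What Theorem~\ref{mainthm}, taken as given, supports is the Corollary for $nr_n^3=O(1)$, together with the multiplicative statement $\mathcal{F}_n/(nr_n^3\tau_f+\frac14)\to1$ in probability in general. An additive version beyond that could only be hoped for on a subrange such as $nr_n^5\to0$, and it would need a genuinely different treatment of the cross terms. The statement should be weakened accordingly rather than proved as written.
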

The integral in (\ref{cfrrgg}) is analytically intractable. To illustrate sensitivity of the limiting value  of $\mathcal{F}_n$ to the distribution's parameters, we compute numerical values of $\tau_f$
across a range of parameters $\kappa$ and $\mu$ in Table \ref{tab1}. The value of $\tau_f$  remains invariant  with respect to the mean $\mu$. This result is intuitively consistent with the underlying geometry of RGGs and the assumption that \(f(x)\) is a density on the circle. Because the distance metric is translation-invariant and $g(x)$ is periodic, shifting the mean direction \(\mu \) does not alter the joint distribution of edges. However, the value of $\tau_f$ is highly sensitive to the concentration parameter \(\kappa \). Recall that when $\kappa=0$, the von Mises distribution is the uniform distribution. For small values of $\kappa$, $\tau_f$ is small; the limiting value of the friendship index gets closer to \(1/4\), the value for the uniform case. For large $\kappa$, $\tau_f$  increases significantly. 

\begin{table}[!h]
\begin{center}
\caption{Numeric values of $\tau_f$. }
\label{tab1}
\begin{tabular}{ |c| c| c|c| c| c| } 
\hline
$(\kappa,\ \mu)$ & (0.1,\ 0.1) & (0.5,\ 0.1) & (1.0,\ 0.1) & (5,\ 0.1) & (10,\ 0.1) \\
 \hline
$\tau_f$ & 0.0657 & 1.6439& 6.5293 & 118.4242 &  352.3377
 \\ 
 \hline  
\hline
$(\kappa,\ \mu)$ & (0.1,\ 0.3) & (0.5,\ 0.3) & (1.0,\ 0.3) & (5,\ 0.3) & (10,\ 0.3) \\
 \hline
$\tau_f$ & 0.0657 & 1.6439 & 6.5293 &  118.4242    & 352.3377 \\ 
 \hline  
 \hline
$(\kappa,\ \mu)$ & (0.1,\ 0.5) & (0.5,\ 0.5) & (1.0,\ 0.5) & (5,\ 0.5) & (10,\ 0.5) \\
 \hline
$\tau_f$ &0.0657 & 1.6439  &6.5293 & 118.4242  &352.3377
\\ 
 \hline
\end{tabular}
\end{center}
\end{table}

\section{Proof of lemmas}\label{fplemprf}
In this section, we provide detailed proofs of the lemmas.  Recall the notations: $\mu_i = \mathbb{E}[d_i \mid X_i]$,  
 $\bar{A}_{tl}=A_{tl}-\mathbb{E}[A_{tl}|X_t]$, $P_t = \sum_{\substack{j \ne k\neq t } }T_{tjk}$, $Q_t=\sum_{j\ne k \ne t}S_{tjk}$, $R_t=\sum_{j \ne k \ne l \ne t}
    T_{tjk}\bar{A}_{tl}$, where $T_{tjk}=A_{tj}A_{jk}
    -A_{tj}A_{tk}$ and $S_{tjk}=A_{tj}A_{tk}
    -A_{tj}A_{jk}A_{kt}$ and $t\in\{1,2\}$.

\subsection{Proof of Lemma \ref{propmain}} Equations (\ref{prope0})-(\ref{prope3}) are proved in \cite{YS26}.  We only need to prove (\ref{1prope3}) and (\ref{2prope3}).

{\bf Proof of (\ref{1prope3}):} Firstly, we consider the first equality in (\ref{1prope3}). By (\ref{prope1}) and properties of conditional expectation, we have
\begin{align}\nonumber
    \mathbb{E}[A_{12}A_{23}A_{24}|X_1]
    &=\mathbb{E}[A_{12}\mathbb{E}[A_{23}A_{24}|X_1,X_2]|X_1]\\ \label{lem1peq1}
    &=(2r_n)^2\mathbb{E}[A_{12}f^2(X_2)|X_1]+\frac{4r_n^4 }{3}\mathbb{E}[A_{12}f(X_2) f''(X_2)|X_1]+ O(r_n^6).
\end{align}
By assumption, $f(x)$ has bounded fourth derivative. Since the interval \([0,1]\) is compact, it follows that both \(f\) and its second derivative \(f^{\prime \prime }\)  are uniformly bounded on this domain. Then
\begin{align}\label{lem1peq2}
\left|\frac{4r_n^4 }{3}\mathbb{E}[A_{12}f(X_2) f''(X_2)|X_1]\right|\leq\frac{4r_n^4 }{3}\mathbb{E}[A_{12}|f(X_2) f''(X_2)||X_1]=O(r_n^4)\mathbb{E}[A_{12}|X_1]=O(r_n^5).
\end{align}

Moreover, the conditional expectation in the first term of (\ref{lem1peq1}) can be expressed as
\begin{align}\nonumber
\mathbb{E}[A_{12}f^2(X_2)|X_1]&=I[r_n\leq X_1\leq 1-r_n]\int_{X_1-r_n}^{X_1+r_n}f^3(x)dx\\\nonumber
&+I[0\leq X_1\leq r_n]\left(\int_{0}^{X_1+r_n}f^3(x)dx+\int_{1-r_n+X_1}^{1}f^3(x)dx\right)\\ \label{lem1peq3}
&+I[1-r_n\leq X_1\leq 1]\left(\int_{X_1-r_n}^1f^3(x)dx+\int_{0}^{r_n+X_1-1}f^3(x)dx\right).
\end{align}
For $X_1-r_n\leq x\leq X_1+r_n$, we have $|x-X_1|\leq r_n$. By the Taylor expansion and the assumption that $f(x)$ has bounded fourth derivative, we have
\begin{align}\nonumber
\int_{X_1-r_n}^{X_1+r_n}f^3(x)dx&=\int_{X_1-r_n}^{X_1+r_n}\big[f^3(X_1)+3f^2(X_1)f'(X_1)(x-X_1)+O(r_n^2)\big]dx\\\label{lem1peq4}
&=2r_nf^3(X_1)+O(r_n^3).
\end{align}

For $0\leq X_1\leq r_n=o(1)$, we have $-1\leq X_1-r_n\leq 0$.
By the change of variable in calculus and the assumption that $f(x)=g(x)I[0\leq x\leq 1]$ with \(g(x)\)  satisfying \(g(x+1)=g(x)=g(x-1)\) for all \(x\in \mathbb{R}\), we have
\begin{align}\nonumber
\int_{1-r_n+X_1}^{1}f^3(x)dx&=\int_{1-r_n+X_1}^{1}g^3(x)I[0\leq x\leq 1]dx\\ \nonumber
&=\int_{X_1-r_n}^{0}g^3(x+1)I[0\leq x+1\leq 1]dx\\ \label{lem1peq5}
&=\int_{X_1-r_n}^{0}g^3(x)dx.
\end{align}
Note that $X_1\in[0,r_n]$ and for $x\in[0,2r_n]\subset[0,1]$, $g(X_1)=f(X_1)$. Then by (\ref{lem1peq5}) and a similar argument as in (\ref{lem1peq4}), we get
\begin{align}\nonumber
\int_{0}^{X_1+r_n}f^3(x)dx+\int_{1-r_n+X_1}^{1}f^3(x)dx&=\int_{X_1-r_n}^{X_1+r_n}g^3(x)dx\\ \nonumber
&=2r_ng^3(X_1)+O(r_n^3)\\ \label{lem1peq6}
&=2r_nf^3(X_1)+O(r_n^3).
\end{align}
Similarly, for $1-r_n\leq X_1\leq 1$, we have
\begin{align}\nonumber
&\int_{X_1-r_n}^1f^3(x)dx+\int_{0}^{r_n+X_1-1}f^3(x)dx\\\nonumber
&=\int_{X_1-r_n}^1g^3(x)dx+\int_{1}^{r_n+X_1}g^3(x-1)I[0\leq x-1\leq1]dx\\\nonumber
&=\int_{X_1-r_n}^1g^3(x)dx+\int_{1}^{r_n+X_1}g^3(x)dx\\\nonumber
&=\int_{X_1-r_n}^{r_n+X_1}g^3(x)dx\\\nonumber
&=2r_ng^3(X_1)+O(r_n^3)\\\label{lem1peq7}
&=2r_nf^3(X_1)+O(r_n^3).
\end{align}
Combining (\ref{lem1peq1})-(\ref{lem1peq7}) yields the asymptotic expression of the first conditional expectation in (\ref{1prope3}). 

We now consider the second conditional expectation in (\ref{1prope3}). Applying the properties of conditional expectation alongside Lemma \ref{propmain}, one may verify that
\begin{align*}\nonumber
\mathbb{E}[A_{12}A_{13}A_{34}|X_1]&=\mathbb{E}[\mathbb{E}[A_{12}A_{13}A_{34}|X_1,X_2,X_3]|X_1]\\
&=\mathbb{E}[A_{12}A_{13}\mathbb{E}[A_{34}|X_3]|X_1]\\
&=\mathbb{E}[A_{12}A_{13}(2r_nf(X_3)+O(r_n^3))|X_1]\\
&=2r_n\mathbb{E}[A_{12}A_{13}f(X_3)|X_1]+O(r_n^5)\\
&=2r_n\mathbb{E}[A_{13}f(X_3)\mathbb{E}[A_{12}|X_1,X_3]|X_1]+O(r_n^5)\\
&=(2r_n)^2\mathbb{E}[A_{13}f(X_1)f(X_3)|X_1]+O(r_n^5)\\
&=(2r_n)^3f^3(X_1)+O(r_n^5).
\end{align*}
Here, the last equality follows from arguments analogous to those used in (\ref{lem1peq3})-(\ref{lem1peq7}).

Finally, we evaluate the third conditional expectation in (\ref{1prope3}). It is straightforward to see that
\begin{align}\nonumber
\mathbb{E}[A_{12}A_{13}A_{14}|X_1]=\left(\mathbb{E}[A_{12}|X_1]\right)^3=(2r_n)^3f^3(X_1)+O(r_n^5).
\end{align}

{\bf Proof of (\ref{2prope3}):} In view of (\ref{prope0}) and (\ref{prope3}), the second conditional expectation of (\ref{2prope3}) is readily obtained as follows:
\begin{align*}
    \mathbb{E}[A_{12}A_{13}A_{23}A_{14}|X_1]&=\mathbb{E}[A_{12}A_{13}A_{23}|X_1]\mathbb{E}[A_{14}|X_1]\\
    &=\left(3r_n^2f^2(X_1) + O(r_n^4)\right)\left(2r_nf(X_1) + O(r_n^3)\right)\\
    &=6r_n^3f^3(X_1)+O(r_n^5).
\end{align*}

Now we consider the first conditional expectation in (\ref{2prope3}). It is easy to verify that
\begin{align}\nonumber
    \mathbb{E}[A_{12}A_{13}A_{23}A_{34}|X_1]&= \mathbb{E}[\mathbb{E}[A_{12}A_{13}A_{23}A_{34}|X_1,X_2,X_3]|X_1]\\ \nonumber
    &=\mathbb{E}[A_{12}A_{13}A_{23}\mathbb{E}[A_{34}|X_3]|X_1]\\ \label{finmodeq1}
    &=2r_n \mathbb{E}[A_{12}A_{13}A_{23}f(X_3)|X_1]+O(r_n^5).
\end{align}
We evaluate the expectation in (\ref{finmodeq1}) by considering three distinct cases: \(X_{1}\in (r_{n},1-r_{n})\), \(X_{1}\in (0,r_{n})\), and \(X_{1}\in (1-r_{n},1)\).

Suppose $X_1\in(r_n,1-r_n)$. Under this condition, the event \(\{A_{12}A_{13}A_{23}=1\}\) occurs in two distinct configurations: (a) $X_2\in(X_1,X_1+r_n)$ and $X_3\in(X_1,X_1+r_n)$ or $X_3\in(X_2-r_n,X_1)$; (b) $X_2\in(X_1-r_n,X_1)$ and $X_3\in(X_1-r_n,X_1)$ or $X_3\in(X_1,X_2+r_n)$. Hence,  we have
\begin{align}\nonumber
    \mathbb{E}[A_{12}A_{13}A_{23}f(X_3)|X_1] &=\int_{X_1}^{X_1+r_n}f(x_2)dx_2\left(\int_{X_1}^{X_1+r_n}f^2(x_3)dx_3+\int_{x_2-r_n}^{X_1}f^2(x_3)dx_3\right)\\ \label{trianeq1}
    &\quad+\int_{X_1-r_n}^{X_1}f(x_2)dx_2\left(\int_{X_1-r_n}^{X_1}f^2(x_3)dx_3+\int_{X_1}^{x_2+r_n}f^2(x_3)dx_3\right).
\end{align}

By employing a fourth-order Taylor expansion of $f(x_2)$ and $f^2(x_3)$ centered at $X_1$, it is straightforward to show that
\begin{align} \label{trianeq2}
\int_{X_1}^{X_1 + r_n} f(x_2) dx_2=f(X_1) r_n + \frac{f'(X_1)}{2} r_n^2 + \frac{f''(X_1)}{6} r_n^3  +O(r_n^4),
\end{align}
\begin{align} \label{trianeq3}
\int_{X_1}^{X_1 + r_n} f^2(x_3) dx_3&= f^2(X_1) r_n +  f(X_1) f'(X_1) r_n^2  + O(r_n^3).
\end{align}

When $X_2\in(X_1,X_1+r_n)$ and  $x_3\in(X_2-r_n,X_1)$, we have $|X_1-x_3|\leq r_n$ and $|X_1-X_2+r_n|\leq 2r_n$. Then
\begin{align*}
\int_{X_2-r_n}^{X_1}f^2(x_3)dx_3=f^2(X_1)(X_1-X_2+r_n)-f(X_1)f'(X_1)(X_1-X_2+r_n)^2+O(r_n^3).
\end{align*}
This implies that
\begin{align*}\nonumber
&\int_{X_1}^{X_1+r_n}f(x_2)dx_2\int_{X_2-r_n}^{X_1}f^2(x_3)dx_3\\ \nonumber
&=f^2(X_1)(X_1+r_n)\int_{X_1}^{X_1+r_n}f(x_2)dx_2-f^2(X_1)\int_{X_1}^{X_1+r_n}x_2f(x_2)dx_2\\  
&\quad-f(X_1)f'(X_1)\int_{X_1}^{X_1+r_n}f(x_2)(X_1-x_2+r_n)^2dx_2.
\end{align*}
Note that
\begin{align*}\nonumber
&f^2(X_1)(X_1+r_n)\int_{X_1}^{X_1+r_n}f(x_2)dx_2\\  
&=f^3(X_1)(X_1r_n+r_n^2)+\frac{f^2(X_1)f'(X_1)}{2} r_n^2(X_1+r_n) + \frac{f''(X_1)f^2(X_1)(X_1+r_n)}{6} r_n^3 +O(r_n^4),  
\end{align*}
\begin{align*}\nonumber
&f^2(X_1)\int_{X_1}^{X_1+r_n}x_2f(x_2)dx_2\\ \nonumber
&=f^3(X_1)X_1r_n+\frac{f^2(X_1)f'(X_1)X_1+f^3(X_1)}{2} r_n^2\\  
&\quad +\frac{2f^2(X_1)f'(X_1)+X_1f^2(X_1)f''(X_1)}{6} r_n^3 +O(r_n^4).
\end{align*}
and
\begin{align*} 
f(X_1)f'(X_1)\int_{X_1}^{X_1+r_n}f(x_2)(X_1-x_2+r_n)^2dx_2=O(r_n^4).
\end{align*}
Then
\begin{align} \label{trianeq8}
&\int_{X_1}^{X_1+r_n}f(x_2)dx_2\int_{X_2-r_n}^{X_1}f^2(x_3)dx_3=\frac{1}{2}f^3(X_1)r_n^2-\frac{f^2(X_1)f'(X_1)}{3} r_n^3+O(r_n^4).
\end{align}
It then follows from (\ref{trianeq2}), (\ref{trianeq3}) and (\ref{trianeq8}) that
\begin{align*}
&\int_{X_1}^{X_1+r_n}f(x_2)dx_2\left(\int_{X_1}^{X_1+r_n}f^2(x_3)dx_3+\int_{X_2-r_n}^{X_1}f^2(x_3)dx_3\right)\\
  &=\frac{3}{2}f^3(X_1)r_n^2+\frac{7f^2(X_1)f'(X_1)}{6} r_n^3+O(r_n^4).
\end{align*}

Similarly, we can get
\begin{align*}
&\int_{X_1-r_n}^{X_1}f(x_2)dx_2\left(\int_{X_1-r_n}^{X_1}f^2(x_3)dx_3+\int_{X_1}^{X_2+r_n}f^2(x_3)dx_3\right)\\
  &=\frac{3}{2}f^3(X_1)r_n^2-\frac{7f^2(X_1)f'(X_1)}{6} r_n^3+O(r_n^4).
\end{align*}
In view of (\ref{trianeq1}), for \(r_{n}\le X_{1}\le 1-r_{n}\), we have
\begin{eqnarray}\label{trianeq9}
\mathbb{E}[A_{12}A_{13}A_{23}f(X_3)|X_1]= 3r_n^3f^2(X_1)+O(r_n^4),
\end{eqnarray}
which, together with (\ref{finmodeq1}), implies that
\begin{align}\label{trianeq09}
    \mathbb{E}[A_{12}A_{13}A_{23}A_{34}|X_1]&=6r_n^3f^2(X_1)+O(r_n^5).
\end{align}

Suppose $X_1\in[0,r_n)$. If $X_2\in[0,X_1]$, then $X_3\in(X_1,X_2+r_n)$ or $X_3\in(0,X_1)$ or $X_3\in(1+X_1-r_n,1)$. If $X_2\in(1+X_1-r_n,1)$, then $X_3\in[0,X_1]$ or $X_3\in[1+X_1-r_n,1]$ or $X_3\in[X_1,X_2+r_n-1]$. If $X_2\in[X_1,r_n]$, then $X_3\in[0,X_1+r_n]$ or $X_3\in[X_2+1-r_n]$. If $X_2\in[r_n,X_1+r_n]$, then $X_3\in(X_1,X_1+r_n)$ or $X_3\in(X_2-r_n,X_1)$.  Hence, we have
\begin{align*}
\mathbb{E}[A_{12}A_{13}A_{23}f(X_3)|X_1]  
&=\int_{0}^{X_1}f(x_2)dx_2\left(\int_{0}^{x_2+r_n}f^2(x_3)dx_3+\int_{1+X_1-r_n}^{1}f^2(x_3)dx_3\right)\\
&\quad+\int_{1+X_1-r_n}^{1}f(x_2)dx_2\left(\int_{0}^{x_2+r_n-1}f^2(x_3)dx_3+\int_{1+X_1-r_n}^{1}f^2(x_3)dx_3\right)\\  \nonumber
    &\quad+\int_{X_1}^{r_n}f(x_2)dx_2\left(\int_0^{X_1+r_n}f^2(x_3)dX_3+\int_{X_2+1-r_n}^1f^2(x_3)dX_3\right)\\  \nonumber
    &\quad+\int_{r_n}^{X_1+r_n}f(x_2)dx_2\int_{x_2-r_n}^{X_1+r_n}f^2(x_3)dx_3.
\end{align*}

By Assumption \ref{assumptiona}, $f(x)=g(x)I[0\leq x\leq 1]$ with \(g(x)\)  satisfying \(g(x+1)=g(x)=g(x-1)\) for all \(x\in \mathbb{R}\). By a change of variables in the definite integral, we find that
\begin{align*}
\int_{1+X_1-r_n}^{1}f^2(x_3)dx_3&=\int_{X_1-r_n}^{0}g^2(y+1)dy=\int_{X_1-r_n}^{0}g^2(y)dy\\
\int_{x_2+1-r_n}^1f^2(x_3)dx_3&=\int_{x_2-r_n}^{0}g^2(y+1)dy=\int_{x_2-r_n}^{0}g^2(y)dy,
\end{align*}
\begin{align*}
\int_{1+X_1-r_n}^{1}f(x_2) \int_{X_1-r_n}^{x_2+r_n-1}f^2(x_3)dx_3dx_2&=\int_{X_1-r_n}^{0}f(x_2+1) \int_{X_1-r_n}^{x_2+r_n}f^2(x_3)dx_3dx_2\\
&=\int_{X_1-r_n}^{0}f(x_2) \int_{X_1-r_n}^{x_2+r_n}f^2(x_3)dx_3dx_2.
\end{align*}

Then
\begin{align*}
\mathbb{E}[A_{12}A_{13}A_{23}f(X_3)|X_1]  
&=\int_{0}^{X_1}f(x_2)dx_2\int_{X_1-r_n}^{x_2+r_n}f^2(x_3)dx_3+\\
&\quad+\int_{X_1-r_n}^{0}f(x_2) \int_{X_1-r_n}^{x_2+r_n}f^2(x_3)dx_3dx_2\\  \nonumber
    &\quad+\int_{X_1}^{X_1+r_n}f(x_2)dx_2\int_{x_2-r_n}^{X_1+r_n}f^2(x_3)dx_3\\
    &=\int_{X_1-r_n}^{X_1}f(x_2)dx_2\int_{X_1-r_n}^{x_2+r_n}f^2(x_3)dx_3\\\nonumber
    &\quad+\int_{X_1}^{X_1+r_n}f(x_2)dx_2\int_{x_2-r_n}^{X_1+r_n}f^2(x_3)dx_3,
\end{align*}
which is the same as (\ref{trianeq1}). Therefore, we conclude (\ref{trianeq09}) holds for $X_1\in[0,r_n)$.

Similarly, it is easy to show that (\ref{trianeq09}) holds for $X_1\in[1-r_n,1]$.
Then the proof is complete.

\subsection{ Proof of Lemma \ref{lem2} }

Given $X_1$,  
$d_1$ follows the  Binomial distribution $B(n-1, p_n)$, where $p_n=\mathbb{E}[A_{12}|X_1]$. From Lemma \ref{propmain}, we have \(p_{n}=2r_{n}f(X_{1})+O(r_{n}^{3})\); furthermore, Assumption \ref{assumptiona} guarantees that \(f(x)\ge \lambda >0\). It follows from  the Chernoff bound that
\begin{eqnarray}\nonumber
\mathbb{P}(d_1\leq \delta \lambda nr_n|X_1)\leq \mathbb{P}(d_1\leq \delta \mu_1|X_1)
&\leq&e^{-\frac{(1-\delta)^2\mu_1}{2}}\leq e^{-(1-\delta)^2r_n\lambda (1+o(1))}.
\end{eqnarray}
Then the proof is complete.

\qed

\subsection{ Proof of Lemma \ref{lnem3}}
Note that $\mathbb{E}\left[(d_1-\mu_1)^8\right]=\mathbb{E}[\mathbb{E}[(d_1 - \mu_1)^8|X_1]]$ and
\begin{align*}
\mathbb{E}[(d_1 - \mu_1)^8|X_1]=\sum_{j_1,j_2,j_3,j_4,j_5,j_6,j_7,j_8\geq1}\mathbb{E}\big[\bar{A}_{1j_1}\bar{A}_{1j_2}\bar{A}_{1j_3}\bar{A}_{1j_4}\bar{A}_{1j_5}\bar{A}_{1j_6}\bar{A}_{1j_7}\bar{A}_{1j_8}|X_1\big].
\end{align*}
If $j_{1}\not\in\{j_2,j_3,j_4,j_5,j_6,j_7,j_8\}$, then, given $X_1$, $A_{1j_1}$ is independent of $A_{1j_t}$ $(2\leq t\leq 8)$. In this case, we have
\[\mathbb{E}\left[\prod_{t=1}^8\bar{A}_{1j_t}\Big|X_1\right]=\mathbb{E}\left[\bar{A}_{1j_1}|X_1\right]\mathbb{E}\left[\prod_{t=2}^8\bar{A}_{1j_t}\Big|X_1\right]=0.\]
Hence,  $j_1\in\{j_2,j_3,j_4,j_5,j_6,j_7,j_8\}$. Similar result holds for each $j_t$. Each index $j_t$ must equal another index $j_s$ ($s\neq t$ $s,t\in\{1,2,\dots,8\}$). Then  $|\{j_1,j_2,j_3,j_4,j_5,j_6,j_7,j_8\}|\leq 4$. Note that $A_{1j}$ and $A_{1k}$ ($k\neq s$) are conditionally independent given $X_1$ and $\mathbb{E}[\bar{A}_{1j}^m|X_1]=O(r_n)$ for all positive integer $m$ (Lemma \ref{propmain}). Then
\begin{align*}
\mathbb{E}[(d_1 - \mu_1)^8|X_1]&=C_1\sum_{j_1\neq j_2\neq j_3\neq j_4\geq1}\mathbb{E}\big[\bar{A}_{1j_1}^2\bar{A}_{1j_2}^2\bar{A}_{1j_3}^2\bar{A}_{1j_4}^2|X_1\big]+\sum_{j_1\geq1}\mathbb{E}\big[\bar{A}_{1j_1}^8|X_1\big]\\
&+C_2\sum_{j_1\neq j_2\neq j_3\geq1}\mathbb{E}\big[\bar{A}_{1j_1}^2\bar{A}_{1j_2}^2\bar{A}_{1j_3}^4|X_1\big]+C_3\sum_{j_1\neq j_2\neq j_3\geq1}\mathbb{E}\big[\bar{A}_{1j_1}^2\bar{A}_{1j_2}^3\bar{A}_{1j_3}^3|X_1\big]\\
&+\sum_{j_1\neq j_2\geq1}\Big(C_4\mathbb{E}\big[\bar{A}_{1j_1}^2\bar{A}_{1j_2}^6|X_1\big]+C_5\mathbb{E}\big[\bar{A}_{1j_1}^3\bar{A}_{1j_2}^5|X_1\big]+C_6\mathbb{E}\big[\bar{A}_{1j_1}^4\bar{A}_{1j_2}^4|X_1\big]\Big)\\
&=O(n^4r_n^4),
\end{align*}
where $C_1, C_2, C_3, C_4, C_5,C_6$ are positive constants. Then the proof is complete.
 
\qed

\subsection{ Proof of Lemma \ref{lem1}: }

 Firstly, we prove (\ref{fieq1}). The second moment of $P_1$ can be expressed as
\begin{align}\nonumber
\mathbb{E}[P_1^2]
&=
\sum_{\substack{j \ne k \ne 1\\j_1 \ne k_1 \ne 1}}
\mathbb{E}\big[T_{1jk}T_{1j_1k_1}\big]\\ \label{p1eq0}
&=\sum_{\substack{j \ne k \ne 1\\j_1 \ne k_1 \ne 1\\ \{j,k\} \cap \{j_1,k_1\} = \emptyset}}
\mathbb{E}\big[T_{1jk}T_{1j_1k_1}\big]+\sum_{\substack{j \ne k \ne 1\\j_1 \ne k_1 \ne 1\\ |\{j,k\}\cap\{j_1,k_1\}| = 1}}
\mathbb{E}\big[T_{1jk}T_{1j_1k_1}\big]+\sum_{\substack{j \ne k \ne 1\\j_1 \ne k_1 \ne 1\\ \{j,k\}=\{j_1,k_1\}}}
\mathbb{E}\big[T_{1jk}T_{1j_1k_1}\big]
\end{align}

Suppose $\{j,k\} \cap \{j_1,k_1\} = \emptyset$.There are   \((n-1)(n-2)(n-3)(n-4)\) quadruplets of indices \((j,k,j_{1},k_{1})\) such that \(j\ne k\ne 1\), \(j_{1}\ne k_{1}\ne 1\), and \(\{j,k\}\cap \{j_{1},k_{1}\}=\emptyset \).  Given $X_1$,  $T_{1jk}$ is independent of $T_{1j_1k_1}$. Since \(f(x)\) is assumed to have a bounded fourth derivative, it follows that \((f^{\prime }(X_{1}))^{2}\) and \(f(X_{1})f^{\prime \prime }(X_{1})\) are also bounded. By (\ref{prope1}) and (\ref{prope01}) of Lemma \ref{propmain}, the first sum in (\ref{p1eq0}) is equal to
\begin{align}\nonumber
&(n-1)(n-2)(n-3)(n-4)\mathbb{E}\big[\mathbb{E}\big[T_{123}T_{145}|X_1\big]\big]\\ \label{p1eq1}
&=
(n-1)(n-2)(n-3)(n-4)\mathbb{E}\big[\mathbb{E}[T_{123}| X_1]\big]\big[\mathbb{E}[T_{145}]| X_1\big]\big] 
= O(n^4r_n^8).
\end{align}

Suppose $\left|\{j,k\} \cap \{j_1,k_1\}\right| =1$. Similarly, there are at most \(n^{3}\) such index quadruplets \((j,k,j_{1},k_{1})\). If $j=j_1$, by (\ref{1prope3}) of Lemma \ref{propmain}, we have
\begin{align}\nonumber
    \mathbb{E}\big[\mathbb{E}\big[
    T_{1jk}T_{1j_1k_1}|X_1
    \big]\big]
    &=\mathbb{E}\big[\mathbb{E}\big[A_{1j}A_{jk}A_{jk_1}|X_1\big]\big]
-\mathbb{E}\big[\mathbb{E}\big[A_{1j}A_{jk}A_{1k_1}|X_1\big]\big]\\\nonumber
    &\quad-\mathbb{E}\big[\mathbb{E}\big[A_{1j}A_{1k}A_{jk_1}|X_1\big]\big]
+\mathbb{E}\big[\mathbb{E}\big[A_{1j}A_{1k}A_{1k_1}|X_1\big]\big]\\ \label{p1eq2}
    &=O(r_n^5).
\end{align}
If $j =k_1$, by (\ref{1prope3}) and (\ref{2prope3}) of Lemma \ref{propmain}, then 
\begin{align}\nonumber
     \mathbb{E}\big[\mathbb{E}\big[
    T_{1jk}T_{1j_1k_1}|X_1
    \big]\big]
    &=  \mathbb{E}\big[\mathbb{E}\big[A_{1j}A_{jk}A_{1j_1}A_{j_1j}|X_1\big]\big]
    -\mathbb{E}\big[\mathbb{E}\big[A_{1j}A_{jk}A_{1j_1}|X_1\big]\big]\\ \nonumber
    &\quad
    -\mathbb{E}\big[\mathbb{E}\big[A_{1j}A_{jj_1}A_{1j_1}A_{1k}|X_1\big]\big]
    +\mathbb{E}\big[\mathbb{E}\big[A_{1j}A_{1k}A_{1j_1}|X_1\big]\big] \\ \label{p1eq3}
    &=O(r_n^5).
\end{align}
The case $k=j_1$ can be similarly bounded as in  (\ref{p1eq3}).  Let  $k=k_1$. Note that $\mathbb{E}\big[A_{1j}A_{jk}A_{1j_1}A_{j_1k}|X_1\big]\leq \mathbb{E}\big[A_{1j}A_{jk}A_{1j_1}|X_1\big]=O(r_n^3)$. By (\ref{1prope3}) and (\ref{2prope3}) of Lemma \ref{propmain}, then 
\begin{align}\nonumber
     \mathbb{E}\big[\mathbb{E}\big[
    T_{1jk}T_{1j_1k_1}|X_1
    \big]\big]
    &=  \mathbb{E}\big[\mathbb{E}\big[A_{1j}A_{jk}A_{1j_1}A_{j_1k}|X_1\big]\big]
    -\mathbb{E}\big[\mathbb{E}\big[A_{1j}A_{jk}A_{1j_1}A_{1k}|X_1\big]\big]\\ \nonumber
    &\quad
    -\mathbb{E}\big[\mathbb{E}\big[A_{1j}A_{1k}A_{1j_1}A_{j_1k}|X_1\big]\big]
    +\mathbb{E}\big[\mathbb{E}\big[A_{1j}A_{1k}A_{1j_1}|X_1\big]\big] \\  \label{00p1eq2}
    &=O(r_n^3).
\end{align}
Then the second sum in (\ref{p1eq0}) is equal to
\begin{align}\label{psqreq1}
\sum_{\substack{j \ne k \ne 1\\j_1 \ne k_1 \ne 1\\ \{j,k\}\cap\{j_1,k_1\}| = 1}}
\mathbb{E}\big[T_{1jk}T_{1j_1k_1}\big]=O(n^3r_n^3).
\end{align}

Suppose $\{j,k\} = \{j_1,k_1\}$. There are at most $n^2$ such index quadruplets \((j,k,j_{1},k_{1})\). Note that $|T_{1jk}|\leq A_{1j}A_{jk}+A_{1j}A_{1k}$ and $|T_{1j_1k_1}|\leq2$.  By (\ref{prope1}) and (\ref{prope01}) of Lemma \ref{propmain}, one has
\begin{align}\label{p1eq4}
    \sum_{\substack{j \ne k \ne 1\\j_1 \ne k_1 \ne 1\\ \{j,k\}=\{j_1,k_1\}}}
\mathbb{E}\big[|T_{1jk}T_{1j_1k_1}|\big]
    \le
    2n^2\mathbb{E}[A_{1j}A_{jk}+A_{1j}A_{1k}] 
    =O(n^2r_n^2).
\end{align}

Combining (\ref{p1eq0}), (\ref{p1eq1}), (\ref{psqreq1}) and (\ref{p1eq4}), we get that equation (\ref{fieq1}).

Next, we prove equation (\ref{fieq2}). The fourth moment of $P_1$ can be written as
\begin{align}\label{fieq3}
\mathbb{E}\left[P_1^4\right]=\sum_{s=2}^8\sum_{\substack{j_1\neq k_1\neq 1,j_2\neq k_2\neq 1\\j_3\neq k_3\neq 1,j_4\neq k_4\neq 1\\ |\{j_1,k_1,j_2,k_2,j_3,k_3,j_4,k_4\}|=s}}\mathbb{E}[T_{1j_1k_1}T_{1j_2k_2}T_{1j_3k_3}T_{1j_4k_4}].
\end{align}

If \(|\{j_{1},k_{1},j_{2},k_{2},j_{3},k_{3},j_{4},k_{4}\}|=8\), there are at most \(n^{8}\) such 8-tuples. In this case, the variables \(T_{1j_{1}k_{1}},T_{1j_{2}k_{2}},T_{1j_{3}k_{3}},\) and \(T_{1j_{4}k_{4}}\) are conditionally independent given \(X_{1}\). By Lemma \ref{propmain}, one has
\begin{align*}\nonumber
\mathbb{E}[T_{1j_1k_1}T_{1j_2k_2}T_{1j_3k_3}T_{1j_4k_4}]&=\mathbb{E}[\mathbb{E}[T_{1j_1k_1}T_{1j_2k_2}T_{1j_3k_3}T_{1j_4k_4}|X_1]]\\
&=\mathbb{E}\big[\mathbb{E}[T_{1j_1k_1}|X_1]\mathbb{E}[T_{1j_2k_2}|X_1]\mathbb{E}[T_{1j_3k_3}|X_1]\mathbb{E}[T_{1j_4k_4}|X_1]\big]=O(r_n^{16}).
\end{align*}
The sum corresponding to $s=8$ in (\ref{fieq3}) is equal to $O(n^8r_n^{16})$.

If $|\{j_1,k_1,j_2,k_2,j_3,k_3,j_4,k_4\}|=7$,  there are at most \(n^{7}\) such 8-tuples. In this case, there exist mutually distinct indices $j_{t_1},k_{t_1},j_{t_2},k_{t_2}$ ($t_1\neq t_2$, $t_1,t_2\in\{1,2,3,4\}$) such that they are distinct from the remaining indices. Without loss of generality, let $t_1=1$ and $t_2=2$. Then $T_{1j_1k_1}$ and $T_{1j_2k_2}$ are conditionally independent given $X_1$, and $T_{1j_1k_1}T_{1j_2k_2}$ is conditionally independent of $T_{1j_3k_3}T_{1j_4k_4}$ given $X_1$. Note that $|T_{1j_3k_3}T_{1j_4k_4}|\leq4$.  By Lemma \ref{propmain}, we have
\begin{align}\nonumber
\mathbb{E}[\mathbb{E}[T_{1j_1k_1}T_{1j_2k_2}T_{1j_3k_3}T_{1j_4k_4}|X_1]]=\mathbb{E}\big[\mathbb{E}[T_{1j_1k_1}|X_1]\mathbb{E}[T_{1j_2k_2}|X_1]\mathbb{E}[T_{1j_3k_3}T_{1j_4k_4}|X_1]\big]=O(r_n^8).
\end{align}
The sum corresponding to $s=7$ in (\ref{fieq3}) is equal to $O(n^7r_n^{8})$.

Suppose $|\{j_1,k_1,j_2,k_2,j_3,k_3,j_4,k_4\}|=6$. There are at most \(n^{6}\) such 8-tuples. Note that $|T_{1jk}|\leq A_{1j}A_{jk}+A_{1j}A_{1k}$. Then
\begin{align}\nonumber
|\mathbb{E}[T_{1j_1k_1}T_{1j_2k_2}T_{1j_3k_3}T_{1j_4k_4}]|
&\leq \mathbb{E}[|T_{1j_1k_1}||T_{1j_2k_2}||T_{1j_3k_3}||T_{1j_4k_4}|]\\ \nonumber
&\leq \mathbb{E}[(A_{1j_1}A_{j_1k_1}+A_{1j_1}A_{1k_1})(A_{1j_2}A_{j_2k_2}+A_{1j_2}A_{1k_2})\\ \label{fieq4}
&\quad \times (A_{1j_3}A_{j_3k_3}+A_{1j_3}A_{1k_3})(A_{1j_4}A_{j_4k_4}+A_{1j_4}A_{1k_4})].
\end{align}
We will show that the expectation of each term in the expansion of the product in  (\ref{fieq4}) is $O(r_n^6)$.
The expectation of the first term of the expansion of the product in (\ref{fieq4}) is
\[\mathbb{E}[A_{1j_1}A_{j_1k_1}A_{1j_2}A_{j_2k_2}A_{1j_3}A_{j_3k_3}A_{1j_4}A_{j_4k_4}].\]
When \[A_{1j_1}A_{j_1k_1}A_{1j_2}A_{j_2k_2}A_{1j_3}A_{j_3k_3}A_{1j_4}A_{j_4k_4}=1,\]
the vertex set  $\{1,j_1,k_1,j_2,k_2,j_3,k_3,j_4,k_4\}$ and the edges $A_{1j_1},A_{j_1k_1}$,$A_{1j_2},A_{j_2k_2}$,$A_{1j_3},A_{j_3k_3}$,$A_{1j_4},A_{j_4k_4}$ form a connected graph, denoted as $G_1$.  If $|\{j_1,k_1,j_2,k_2,j_3,k_3,j_4,k_4\}|=6$, then $G_1$ has 7 nodes. Therefore, $G_1$ has a spanning tree that contains exactly 6 edges, denoted as $T$. Let $\mathcal{E}[T]$ denote the edge set of $T$. Then
 \[\mathbb{E}[A_{1j_1}A_{j_1k_1}A_{1j_2}A_{j_2k_2}A_{1j_3}A_{j_3k_3}A_{1j_4}A_{j_4k_4}]\leq \mathbb{E}[\prod_{e\in\mathcal{E}[T]}A_{e}]=O(r_n^6),\]
where the last term is obtained by repeatedly applying (\ref{prope0}) of Lemma \ref{propmain} six times. The expectation of the remaining terms in  the expansion of the product in (\ref{fieq4}) can be similarly bounded. Then the sum corresponding to $s=6$ in (\ref{fieq3}) is $O(n^6r_n^{6})$.

Similarly, the sum corresponding to $s$ in (\ref{fieq3}) is $O(n^sr_n^{s})$ for $s\in\{2,3,4,5\}$.  In summary, we obtain (\ref{fieq2}). The proof of Lemma \ref{lem1} is complete.

\qed

\subsection{Proof of Lemma \ref{lem3}}
Note that (\ref{fieq6}) follows from (\ref{fieq7}) by the Cauchy–Schwarz inequality. Then we only need to prove (\ref{fieq7}). It is easy to verify by the Cauchy–Schwarz inequality that
\begin{align}\label{fieq8}
    \mathbb{E}\left[\frac{P_1^2(d_1-\mu_1)^4}{d_1^2\mu_1^4} I[d_1\geq1]\right]
    &\leq \sqrt{  \mathbb{E}\left[\frac{P_1^4}{\mu_1^7} \right]  \mathbb{E}\left[  \frac{(d_1-\mu_1)^8}{d_1^4\mu_1}  I[d_1\geq1]\right]}.
\end{align}
By Lemma \ref{lem1} and the fact that $\mu_1=\Theta(nr_n)$, we have
\begin{align}\label{fieq9}
\mathbb{E}\left[\frac{P_1^4}{\mu_1^7}\right]=O\left(\frac{1}{(nr_n)^7}\right)\mathbb{E}\left[P_1^4\right]=O\left(nr_n^{9}+r_n+\frac{1}{nr_n}\right).
\end{align}
Let $\epsilon\in(0,1)$ be  a small constant.   Then
\begin{align}\label{fieq10}
\mathbb{E}\left[  \frac{(d_1-\mu_1)^8}{d_1^4\mu_1}  I[d_1\geq1]\right] 
    &=  \mathbb{E}\left[ \frac{(d_1-\mu_1)^8}{d_1^4\mu_1}  I[1\leq d_1 \le \epsilon n r_n]\right]+\mathbb{E}\left[ \frac{(d_1-\mu_1)^8}{d_1^4\mu_1} I[d_1 > \epsilon n r_n]\right].
\end{align}
Note that $|d_1-\mu_1|\leq 2\mu_1$ when $1\leq d_1 \le \epsilon n r_n$. In light of Lemma \ref{lem2},
it is easy to verify that
\begin{align}\nonumber
\mathbb{E}\left[ \frac{(d_1-\mu_1)^8}{d_1^4\mu_1}  I[1\leq d_1 \le \epsilon n r_n]\right]&\leq \mathbb{E}\left[ \frac{2^8\mu_1^8}{\mu_1}  I[1\leq d_1 \le \epsilon n r_n]\right]\\ \nonumber
&=O(n^7r_n^7)\mathbb{P}\left(1\leq d_1 \le \epsilon n r_n\right)\\ \nonumber
&=O(n^7r_n^7)e^{-cnr_n(1+o(1))}\\ \label{fieq11}
&=e^{-cnr_n(1+o(1))}.
\end{align}
In addition, it follows from Lemma \ref{lnem3} that
\begin{align}\label{fieq12}
\mathbb{E}\left[ \frac{(d_1-\mu_1)^8}{d_1^4\mu_1} I[d_1 > \epsilon n r_n]\right]=O\left(\frac{1}{n^5r_n^5}\right)\mathbb{E}\left[(d_1-\mu_1)^8\right]=O\left(\frac{1}{nr_n}\right).
\end{align}

Combining the assumption \(r_{n}=o(1)\) with (\ref{fieq8})--(\ref{fieq12}), we conclude the proof of Lemma \ref{lem3}.

\qed

\subsection{Proof of Lemma \ref{addlem1}}

Note that
\begin{align} \label{lem5eqpoof3}
\mathbb{E}[Q_1^2|X_1]=\sum_{\substack{j\ne k \ne 1\\ j_1\ne k_1\ne 1}}\mathbb{E}\big[S_{1jk}
   S_{1j_1k_1}|X_1\big].
\end{align}
Suppose $\{j,k\}\cap\{j_1,k_1\}=\emptyset$. There are at most $n^4$ such index quadruplets \((j,k,j_{1},k_{1})\). Moreover, $S_{1jk}$ and $S_{1j_1k_1}$ are conditionally independent given $X_1$. By Lemma \ref{propmain}, we have
\begin{align}\label{lem5eqpoof4}
\mathbb{E}\big[S_{1jk} S_{1j_1k_1}|X_1\big]=\mathbb{E}\big[S_{1jk}|X_1\big]\mathbb{E}\big[S_{1j_1k_1}|X_1\big]=r_n^4f^4(X_1)+O(r_n^6).
\end{align}
Suppose $|\{j,k\}\cap\{j_1,k_1\}|=1$. There are at most $n^3$ such index quadruplets \((j,k,j_{1},k_{1})\). Without loss of generality, let $j=j_1$. Note that $A_{1j}A_{1k}
    \geq A_{1j}A_{jk}A_{k1}$. Then
\begin{align}\label{lem5eqpoof5}
\big|\mathbb{E}\big[S_{1jk} S_{1j_1k_1}|X_1\big]\big|&\leq 4\mathbb{E}\big[A_{1j}A_{1k}A_{1k_1}|X_1\big]=O(r_n^3).
\end{align}
Suppose $\{j,k\}=\{j_1,k_1\}$. There are at most $n^2$ such index quadruplets \((j,k,j_{1},k_{1})\). Then
\begin{align}\label{lem5eqpoof6}
\big|\mathbb{E}\big[S_{1jk} S_{1j_1k_1}|X_1\big]\big|\leq \mathbb{E}\big[A_{1j}A_{1k}|X_1\big]=O(r_n^2).
\end{align}
Combining (\ref{lem5eqpoof3})-(\ref{lem5eqpoof6}) yields (\ref{9lem5eqpoof7}). Then the proof is complete.

\qed

\subsection{Proof of Lemma \ref{adlemnew1}}

Note that
\begin{align}\label{leqpoo0f8}
\mathbb{E}[R_1^2|X_1]&=\sum_{\substack{j \ne k \ne l \ne 1\\ j_1 \ne k_1 \ne l_1 \ne 1}}
    \mathbb{E}[T_{1jk}T_{1j_1k_1}\bar{A}_{1l}\bar{A}_{1l_1}\big|X_1].
\end{align}
If $l\not\in\{j_1,k_1,l_1\}$, then $\bar{A}_{1l}$ is conditionally independent of $T_{1jk}T_{1j_1k_1}\bar{A}_{1l_1}$ given $X_1$. In this case,
\[\mathbb{E}[T_{1jk}T_{1j_1k_1}\bar{A}_{1l}\bar{A}_{1l_1}\big|X_1]=\mathbb{E}[\bar{A}_{1l}\big|X_1]\mathbb{E}[T_{1jk}T_{1j_1k_1}\bar{A}_{1l_1}\big|X_1]=0.\]
Then $l\in\{j_1,k_1,l_1\}$. Similarly, we have $l_1\in\{j,k,l\}$. 

Suppose $l=l_1$. Note that the summation in (\ref{leqpoo0f8}) is taken over all indices $j \ne k \ne l \ne 1$ and $ j_1 \ne k_1 \ne l_1 \ne 1$. Hence $l=l_1\not\in\{j,k,j_1,k_1\}$. If $\{j,k\}\cap\{j_1,k_1\}=\emptyset$, there are at most $n^5$ such index 6-tuples \((j,k,l,j_{1},k_{1},l_1)\). Moreover, $T_{1jk}$,  $T_{1j_1k_1}$ and $\bar{A}_{1l}$  are conditionally independent given $X_1$.  It follows from Lemma \ref{propmain} that
\begin{align*} \mathbb{E}\big[T_{1jk}T_{1j_1k_1}\bar{A}_{1l}^2|X_1\big]=\mathbb{E}\big[T_{1jk}\big|X_1\big]\mathbb{E}\big[T_{1j_1k_1}\big|X_1\big]\mathbb{E}\big[\bar{A}_{1l}^2|X_1\big]=O(r_n^9).
\end{align*}
If $|\{j,k\}\cap\{j_1,k_1\}|=1$, there are at most $n^4$ such index 6-tuples \((j,k,l,j_{1},k_{1},l_1)\). By (\ref{p1eq2})-(\ref{00p1eq2}), we have  
\begin{align*} 
\mathbb{E}\Big[T_{1jk}T_{1j_1k_1}\bar{A}_{1l}^2|X_1\Big]
=\mathbb{E}\big[T_{1jk}T_{1j_1k_1}|X_1\big]\mathbb{E}\big[\bar{A}_{1l}^2|X_1\big]=O(r_n^4).
\end{align*}
If $\{j,k\}=\{j_1,k_1\}$, there are at most $n^3$ such index 6-tuples \((j,k,l,j_{1},k_{1},l_1)\). Then we have  
\begin{align*} 
|\mathbb{E}\Big[T_{1jk}T_{1j_1k_1}\bar{A}_{1l}^2|X_1\Big]|
\leq2\mathbb{E}\big[|T_{1jk}||X_1\big]\mathbb{E}\big[\bar{A}_{1l}^2|X_1\big]=O(r_n^3).
\end{align*}
Therefore, for the sum taken over $l=l_1$ in (\ref{leqpoo0f8}), we have
\begin{align}\label{leqpoof08}
\sum_{\substack{j \ne k \ne l \ne 1\\ j_1 \ne k_1 \ne l \ne 1}}
    \mathbb{E}[T_{1jk}T_{1j_1k_1}\bar{A}_{1l}^2\big|X_1]=O(n^5r_n^9+n^4r_n^4+n^3r_n^3).
\end{align}

Suppose $l\neq l_1$ and $|\{l,j,k\}\cap\{l_1,j_1,k_1\}|=2$. There are at most $n^4$ such index 6-tuples \((j,k,l,j_{1},k_{1},l_1)\). Recall that $l\in\{j_1,k_1,l_1\}$ and $l_1\in\{j,k,l\}$. Otherwise, the expectation in (\ref{leqpoo0f8}) vanishes. Then there are 4 cases: (a) $l=j_1$ and $l_1=j$; (b) $l=j_1$ and $l_1=k$; (c) $l=k_1$ and $l_1=j$; (d) $l=k_1$ and $l_1=k$. We evaluate the expectation in (77) by considering the four cases. Note that $\{j,k\}\cap\{j_1,k_1\}=\emptyset$ and $A_{1l}\bar{A}_{1l}=A_{1l}(1-\mathbb{E}[A_{1l}|X_1])$. If $l=j_1$ and $l_1=j$, then 
\begin{align*}\nonumber
\mathbb{E}\big[T_{1jk}T_{1j_1k_1}\bar{A}_{1j_1}\bar{A}_{1j}|X_1\big]
&=\mathbb{E}\big[A_{1j}(A_{jk}-A_{1k})A_{1j_1}(A_{j_1k_1}-A_{1k_1})\bar{A}_{1j_1}\bar{A}_{1j}|X_1\big]\\ \nonumber
&=(1-\mathbb{E}[A_{1l}|X_1])^2\mathbb{E}\big[T_{1jk}T_{1j_1k_1}|X_1\big]\\  
&=O(r_n^8).
\end{align*}
If $l=j_1$ and $l_1=k$, then
\begin{align*}\nonumber
\mathbb{E}\big[T_{1jk}T_{1j_1k_1}\bar{A}_{1j_1}\bar{A}_{1k}|X_1\big]
&=\mathbb{E}\big[A_{1j}(A_{jk}-A_{1k})A_{1j_1}(A_{j_1k_1}-A_{1k_1})\bar{A}_{1j_1}\bar{A}_{1k}|X_1\big]\\ \nonumber
&=(1-\mathbb{E}[A_{1j_1}|X_1])\mathbb{E}\big[T_{1jk}\bar{A}_{1k}|X_1\big]\mathbb{E}\big[T_{1j_1k_1}|X_1\big]\\  
&=O(r_n^6).
\end{align*}
If $l=k_1$ and $l_1=j$, then
\begin{align*}\nonumber
\mathbb{E}\big[T_{1jk}T_{1j_1k_1}\bar{A}_{1j}\bar{A}_{1k_1}|X_1\big]
&=\mathbb{E}\big[A_{1j}(A_{jk}-A_{1k})A_{1j_1}(A_{j_1k_1}-A_{1k_1})\bar{A}_{1j}\bar{A}_{1k_1}|X_1\big]\\ \nonumber
&=(1-\mathbb{E}[A_{1j}|X_1])\mathbb{E}\big[T_{1jk}|X_1\big]\mathbb{E}\big[T_{1j_1k_1}\bar{A}_{1k_1}|X_1\big]\\ 
&=O(r_n^6).
\end{align*}
If $l=k_1$ and $l_1=k$, then
\begin{align*}\nonumber
\mathbb{E}\big[T_{1jk}T_{1j_1k_1}\bar{A}_{1k}\bar{A}_{1k_1}|X_1\big]
=\mathbb{E}\big[T_{1jk}\bar{A}_{1k}|X_1\big]\mathbb{E}\big[T_{1j_1k_1}\bar{A}_{1k_1}|X_1\big]=O(r_n^4).
\end{align*}
Therefore, for the sum taken over $l\neq l_1$ and $|\{l,j,k\}\cap\{l_1,j_1,k_1\}|=2$ in (\ref{leqpoo0f8}), we have
\begin{align}\label{leqpoof008}
\sum_{\substack{j \ne k \ne l \ne 1\\ j_1 \ne k_1 \ne l_1 \ne 1\\
|\{l,j,k\}\cap\{l_1,j_1,k_1\}|=2\\ l\neq l_1}}
    \mathbb{E}[T_{1jk}T_{1j_1k_1}\bar{A}_{1l}\bar{A}_{1l_1}\big|X_1]=O(n^4r_n^4).
\end{align}

Suppose $\{j,k,l\}=\{j_1,k_1,l_1\}$. There are at most $n^3$ such index 6-tuples \((j,k,l,j_{1},k_{1},l_1)\).  Note that $|\bar{A}_{1l_1}|\leq 2$ and $|T_{1j_1k_1}|\leq2$. The sum taken over  $\{l,j,k\}=\{l_1,j_1,k_1\}$ in (\ref{leqpoo0f8}) is bounded by 
\begin{align}\nonumber
\sum_{\substack{j \ne k \ne l \ne 1\\ j_1 \ne k_1 \ne l_1 \ne 1\\
\{l,j,k\}=\{l_1,j_1,k_1\}
}}\big|\mathbb{E}\big[\big(T_{1jk}T_{1j_1k_1}\bar{A}_{1l}\bar{A}_{1l_1}\big|X_1\big]\big|
&\leq
4\mathbb{E}\big[\big(A_{1j}A_{jk}+A_{1j}A_{1k}\big)\big(A_{1l}+\mathbb{E}[A_{1l}|X_1]\big)|X_1\big]\\ \label{lem5eqpoof11}
&=O(n^3r_n^3).
\end{align}

Combining (\ref{leqpoo0f8})-(\ref{lem5eqpoof11}) yields (\ref{lemnewww2}). Then the proof is complete.

\qed

\subsection{Proof of Lemma \ref{lemnew1} }

 We provide detailed proof of (\ref{lemneweq1}). The proofs of (\ref{lemneweq01}) and (\ref{lem2neweq2}) are analogous. We omit them for simplicity.  Note that
\begin{align}\label{frifinropeq1}
     \mathbb{E}\left[\frac{P_1R_2}{\mu_1^2\mu_2^2}\right]=\mathbb{E}\left[\frac{\mathbb{E}[P_1R_2|X_1,X_2]}{\mu_1^2\mu_2^2}\right]=\sum_{\substack{j\ne k \ne 1\\ j_1\ne k_1\neq l \ne 2}} \mathbb{E}\left[\frac{1}{\mu_1^2\mu_2^2}\mathbb{E}[T_{1jk}T_{2j_1k_1}\bar{A}_{2l}|X_1,X_2]\right].
\end{align}
We will derive an upper bound for the conditional expectation \(\mathbb{E}[T_{1jk}T_{2j_{1}k_{1}}{A}_{2l}|X_{1},X_{2}]\) by considering the cases where \(l\notin \{1,j,k\}\) and \(l\in \{1,j,k\}\). 

If \(l\notin \{1,j,k\}\), then \(T_{1jk}T_{2j_{1}k_{1}}\) and \(\bar{A}_{2l}\) are conditionally independent given \(X_{1}\) and \(X_{2}\); consequently, we have
\begin{align*}
\mathbb{E}[T_{1jk}T_{2j_1k_1}\bar{A}_{2l}|X_1,X_2]=\mathbb{E}[T_{1jk}T_{2j_1k_1}|X_1,X_2]\mathbb{E}[\bar{A}_{2l}|X_1,X_2]=0.
\end{align*}
Then the sum over indices satisfying \(j\ne k\ne 1\) and \(j_{1}\ne k_{1}\ne l\ne 2\) with  \(l\notin \{1,j,k\}\) in (\ref{frifinropeq1}) vanishes.

If $l=1$, then $j_1,k_1\geq 3$. Suppose $\{j,k\}\cap\{j_1,k_1\}=\emptyset$ and $j,k\geq3$. There are at most $n^4$ such index tuples $\{j,k,j_1,k_1,l\}$.  In addition,  \(T_{1jk}\), \(T_{2j_{1}k_{1}}\) and \(\bar{A}_{21}\) are conditionally independent given \(X_{1}\) and \(X_{2}\). It follows from Lemma \ref{propmain} that
\begin{align*}
\mathbb{E}[T_{1jk}T_{2j_1k_1}\bar{A}_{21}|X_1,X_2]=\bar{A}_{21}\mathbb{E}[T_{1jk}|X_1,X_2]\mathbb{E}[T_{2j_1k_1}|X_1,X_2]=O(r_n^8).
\end{align*}
Consequently, the sum in (\ref{frifinropeq1}) over the indices satisfying \(j\ne k\ne 1\) and \(j_{1}\ne k_{1}\ne l\ne 2\), restricted to the case \(l=1\) and \(\{j,k\}\cap \{j_{1},k_{1}\}=\emptyset \) and $j,k\geq3$, is of order \(O(r_{n}^{4})\).

If $l=1$, and $|\{j,k\}\cap\{j_1,k_1\}|=3$, there are at most $n^3$ such index tuples $\{j,k,j_1,k_1,l\}$. Note that $|\bar{A}_{21}|\leq2$, $\mu_1=\Theta(nr_n)$ and $\mu_2=\Theta(nr_n)$. If $j=j_1$, the expectation in    (\ref{frifinropeq1}) can be bounded as follows:
\begin{align*}
&\mathbb{E}\left[\frac{1}{\mu_1^2\mu_2^2}|\mathbb{E}[T_{1jk}T_{2jk_1}\bar{A}_{21}|X_1,X_2]|\right]\\
&\leq 2\mathbb{E}\left[\frac{1}{\mu_1^2\mu_2^2}\mathbb{E}[A_{1j}A_{jk}A_{2j}A_{jk_1}|X_1,X_2]\right]+2\mathbb{E}\left[\frac{1}{\mu_1^2\mu_2^2}\mathbb{E}[A_{1j}A_{jk}A_{2j}A_{2k_1}|X_1,X_2]\right]\\
&\quad +2\mathbb{E}\left[\frac{1}{\mu_1^2\mu_2^2}\mathbb{E}[A_{1j}A_{kk}A_{2j}A_{jk_1}|X_1,X_2]\right]+2\mathbb{E}\left[\frac{1}{\mu_1^2\mu_2^2}\mathbb{E}[A_{1j}A_{1k}A_{2j}A_{2k_1}|X_1,X_2]\right]\\
&=O\left(\frac{1}{n^4r_n^4}\right)\Big(\mathbb{E}\left[A_{1j}A_{jk}A_{2j}A_{jk_1}\right]+\mathbb{E}[A_{1j}A_{jk}A_{2j}A_{2k_1}]\\
&\quad +\mathbb{E}[A_{1j}A_{kk}A_{2j}A_{jk_1}]+\mathbb{E}[A_{1j}A_{1k}A_{2j}A_{2k_1}]\Big)\\
&=O\left(\frac{r_n^4}{n^4r_n^4}\right).
\end{align*}
Here, the last equality follows from repeatedly applying the first equation in Lemma \ref{propmain} four times. The cases $j=k_1$ or $k=j_1$ or $k=k_1$ can be similarly bounded.
Consequently, the sum in (\ref{frifinropeq1}) over the indices satisfying \(j\ne k\ne 1\) and \(j_{1}\ne k_{1}\ne l\ne 2\), restricted to the case \(l=1\) and $|\{j,k\}\cap\{j_1,k_1\}|=3$, is of order \(O\left(\frac{1}{n}\right)\).

If $l=1$, and $\{j,k\}=\{j_1,k_1\}$, there are at most $n^2$ such index tuples $\{j,k,j_1,k_1,l\}$. Note that $|\bar{A}_{21}|\leq2$, $|T_{2j_1k_1}|\leq2$ and $|T_{1jk}|\leq A_{1j}(A_{jk}+A_{1k})$. In this case, we have
\begin{align*}
|\mathbb{E}[T_{1jk}T_{2j_1k_1}\bar{A}_{21}|X_1,X_2]|\leq4\mathbb{E}[|T_{1jk}||X_1,X_2]=O(r_n^2).
\end{align*}
Consequently, the sum in (\ref{frifinropeq1}) over the indices satisfying \(j\ne k\ne 1\) and \(j_{1}\ne k_{1}\ne l\ne 2\), restricted to the case \(l=1\) and $\{j,k\}=\{j_1,k_1\}$, is of order \(O\left(\frac{1}{n^{2}r_{n}^{2}}\right)\).

If $l=1$, $j=2$ and $k\neq j_1\neq k_1$,  there are at most $n^3$ such index tuples $\{j,k,j_1,k_1,l\}$. In this case, we have
\begin{align*}
\mathbb{E}[T_{12k}T_{2j_1k_1}\bar{A}_{21}|X_1,X_2]= \bar{A}_{21}\mathbb{E}[T_{12k}|X_1,X_2]\mathbb{E}[T_{2j_1k_1}|X_1,X_2]=O(r_n^4).
\end{align*}
Consequently, the sum in (\ref{frifinropeq1}) over these indices is of order  \(O\left(\frac{1}{n}\right)\).

If $l=1$, $j=2$, $k=j_1$ or  $k=k_1$, there are at most $n^2$ such index tuples $\{j,k,j_1,k_1,l\}$. In this case, we have
\begin{align*}
\mathbb{E}[|T_{12k}T_{2j_1k_1}\bar{A}_{21}|]\leq 4\mathbb{E}[A_{12}(A_{2k}+A_{1k})]=O(r_n^2).
\end{align*}
Thus, the summation in (\ref{frifinropeq1}) restricted to these indices is bounded by \(O\left(\frac{1}{n^{2}r_{n}^{2}}\right)\).

If $l=1$, $k=2$, and $j\neq j_1\neq k_1$, there are at most $n^3$ such index tuples $\{j,k,j_1,k_1,l\}$. In this case, we have
\begin{align*}
\mathbb{E}[T_{1j2}T_{2j_1k_1}\bar{A}_{21}|X_1,X_2]=\bar{A}_{21}\mathbb{E}[T_{1j2}|X_1,X_2]\mathbb{E}[T_{2j_1k_1}|X_1,X_2]=O(r_n^4).
\end{align*}
Consequently, the sum in (\ref{frifinropeq1}) over these indices is of order  
\(O\left(\frac{1}{n}\right)\).

If $l=1$, $k=2$, $j=j_1$ or  $k=k_1$, there are at most $n^2$ such index tuples $\{j,k,j_1,k_1,l\}$. In this case, we have
\begin{align*}
\mathbb{E}[|T_{1j2}T_{2j_1k_1}\bar{A}_{21}|]\leq 4\mathbb{E}[(A_{1j_1}A_{j_1k_1}+A_{1j_1}A_{1k_1})|X_1,X_2]=O(r_n^2).
\end{align*}
Thus, the summation in (\ref{frifinropeq1}) restricted to these indices is bounded by \(O\left(\frac{1}{n^{2}r_{n}^{2}}\right)\).

If $l=j$, then $l=j\geq3$, and $l=j\not\in\{k,j_1,k_1\}$. Suppose $k\not\in\{j_1,k_1\}$, $k, j_1,k_1\geq3$. There are at most $n^3$ such index tuples $\{j,k,j_1,k_1,l\}$. In this case, we have
\begin{align*}
\mathbb{E}[T_{1jk}T_{2j_1k_1}\bar{A}_{2j}|X_1,X_2]=\mathbb{E}[T_{1jk}\bar{A}_{2j}|X_1,X_2]\mathbb{E}[T_{2j_1k_1}|X_1,X_2]=O(r_n^7).
\end{align*}
Consequently, the sum in (\ref{frifinropeq1}) over these indices is \(O\left(\frac{1}{n}\right)\).

 Suppose $l=j$. If $k=j_1$ or $k=k_1$, and $k, j_1,k_1\geq3$, there are at most $n^2$ such index tuples $\{j,k,j_1,k_1,l\}$. In this case, we have
\begin{align*}
|\mathbb{E}[T_{1jk}T_{2j_1k_1}\bar{A}_{2j}|X_1,X_2]|\leq 4\mathbb{E}[|T_{1jk}| |X_1,X_2]=O(r_n^2).
\end{align*}
Consequently, the sum in (\ref{frifinropeq1}) over these indices is  \(O\left(\frac{1}{n^{2}r_{n}^{2}}\right)\). Similar results hold for $k=2$ or $j_1=1$ or $k_1=1$.

If $l=k$, then $k=l\geq3$, and $l=k\not\in\{j,j_1,k_1\}$. Suppose $j\neq j_1\neq k_1\geq3$. There are at most $n^4$ such index tuples $\{j,k,j_1,k_1,l\}$. In this case, we have
\begin{align*}
\mathbb{E}[T_{1jk}T_{2j_1k_1}\bar{A}_{2k}|X_1,X_2]=\mathbb{E}[T_{1jk}\bar{A}_{2k}|X_1,X_2]\mathbb{E}[T_{2j_1k_1}|X_1,X_2]=O(r_n^7).
\end{align*}
Consequently, the sum in (\ref{frifinropeq1}) over these indices is of order \(O(r_{n}^{3})\).

Suppose $l=k$.  If $j=j_1\neq k_1\geq3$, there are at most $n^3$ such index tuples $\{j,k,j_1,k_1,l\}$. In this case, we have
\begin{align*}
&\mathbb{E}\left[\frac{1}{\mu_1^2\mu_2^2}\mathbb{E}[|T_{1jk}T_{2jk_1}\bar{A}_{2k}||X_1,X_2]\right]\\
&\leq O\left(\frac{1}{n^4r_n^4}\right)\mathbb{E}\left[\mathbb{E}[ (A_{1j}A_{jk}A_{jk_1}+A_{1j}A_{1k}A_{2k_1})(A_{2k}+\mathbb{E}[A_{2k}|X_1])|X_1,X_2]\right]\\
&=O\left(\frac{1}{n^4}\right).
\end{align*}
Consequently, the sum in (\ref{frifinropeq1}) over these indices is of order \(O\left(\frac{1}{n}\right)\).

Suppose $l=k$.  If $j=k_1\neq j_1\geq3$, there are at most $n^3$ such index tuples $\{j,k,j_1,k_1,l\}$. In this case, we have
\begin{align*}
&\mathbb{E}\left[\frac{1}{\mu_1^2\mu_2^2}\mathbb{E}[|T_{1jk}T_{2j_1j}\bar{A}_{2k}||X_1,X_2]\right]\\
&\leq O\left(\frac{1}{n^4r_n^4}\right)\mathbb{E}\left[\mathbb{E}[ (A_{1j}A_{jk}A_{2j_1}+A_{1j}A_{1k}A_{2j_1})(A_{2k}+\mathbb{E}[A_{2k}|X_1])|X_1,X_2]\right]\\
&=O\left(\frac{1}{n^4}\right).
\end{align*}
Consequently, the sum in (\ref{frifinropeq1}) over these indices is of order \(O\left(\frac{1}{n}\right)\).

If $l=k$, $j=2$ and $j_1\neq k_1\geq3$, there are at most $n^3$ such index tuples $\{j,k,j_1,k_1,l\}$. In this case, we have
\begin{align*}
&\mathbb{E}\left[\frac{1}{\mu_1^2\mu_2^2}\mathbb{E}[|T_{12k}T_{2j_1k_1}\bar{A}_{2k}||X_1,X_2]\right]\\
&\leq O\left(\frac{1}{n^4r_n^4}\right)\mathbb{E}\left[ (A_{12}A_{2k}+A_{12}A_{1k})(A_{2j_1}A_{j_1k_1}+A_{2j_1}A_{2k_1})\right] \\
&=O\left(\frac{1}{n^4}\right).
\end{align*}
Consequently, the sum in (\ref{frifinropeq1}) over these indices is of order \(O\left(\frac{1}{n}\right)\).

If $l=k$  and $j=2$ and $j_1=1$ or $k_1=1$, there are at most $n^2$ such index tuples $\{j,k,j_1,k_1,l\}$. In this case, we have
\begin{align*}
\mathbb{E}[|T_{12k}T_{2j_1k_1}\bar{A}_{2k}||X_1,X_2]\leq 4\mathbb{E}[ (A_{2k}A_{2j_1}A_{j_1k_1}+A_{2k}A_{2j_1}A_{2k_1})|X_1,X_2] =O(r_n^2).
\end{align*}
Consequently, the sum in (\ref{frifinropeq1}) over these indices is of order \(O\left(\frac{1}{n^{2}r_{n}^{2}}\right)\).

In summary, we obtain (\ref{lemneweq1}).  The proofs of (\ref{lemneweq01}) and (\ref{lem2neweq2}) are analogous. We omit them for simplicity.

\qed

\subsection{Proof of Lemma \ref{lemnew3} }

   Note that
\begin{align}\label{lem5eqpoof8}
\mathbb{E}\left[\frac{R_1R_2}{\mu_1^2\mu_2^2}\right]=\mathbb{E}\left[\frac{\mathbb{E}[R_1R_2|X_1,X_2]}{\mu_1^2\mu_2^2}\right]&=\sum_{\substack{j \ne k \ne l \ne 1\\ j_1 \ne k_1 \ne l_1 \ne 2}}
    \mathbb{E}\left[\frac{\mathbb{E}[T_{1jk}T_{2j_1k_1}\bar{A}_{1l}\bar{A}_{2l_1}\big|X_1,X_2]}{\mu_1^2\mu_2^2}\right].
\end{align}
If $l\not\in\{2,j_1,k_1,l_1\}$, then $\bar{A}_{1l}$ is conditionally independent of $T_{1jk}T_{2j_1k_1}\bar{A}_{2l_1}$ given $X_1,X_2$. Then
\[\mathbb{E}\Big[T_{1jk}T_{2j_1k_1}\bar{A}_{1l}\bar{A}_{2l_1}\big|X_1,X_2\Big]=\mathbb{E}\big[T_{1jk}T_{2j_1k_1}\bar{A}_{2l_1}\big|X_1,X_2\big]\mathbb{E}\big[\bar{A}_{1l}\big|X_1,X_2\big]=0.\]
Hence, $l\in\{2, j_1,k_1,l_1\}$. Similarly, we have $l_1\in\{1,j,k,l\}$. Next, we derive upper bounds for the sums in (\ref{lem5eqpoof8}) corresponding to the cases \(l=l_{1}\), \(l=2\), \(l=j_{1}\), and \(l=k_{1}\), respectively.

(c1). If $l=l_1$, then $l=l_1\geq 3$ and $l=l_1\not\in\{j,k,j_1,k_1\}$. For $\{j,k\}\cap\{j_1,k_1\}=\emptyset$  and $j,k,j_1,k_1\geq3$, it follows from Lemma \ref{propmain} that
\begin{align}\nonumber
\mathbb{E}\big[T_{1jk}T_{2j_1k_1}\bar{A}_{1l}^2\big|X_1,X_2\big]=\mathbb{E}\big[T_{1jk}\big|X_1,X_2\big]\mathbb{E}\big[T_{2j_1k_1}\big|X_1,X_2\big]\mathbb{E}\big[\bar{A}_{1l}^2\big|X_1,X_2\big]
=O(r_n^9).
\end{align}
Moreover, there are at most $n^5$ such index tuples $(j,k,l,j_1,k_1,l_1)$. 
Consequently, the sum in (\ref{lem5eqpoof8}) over these indices is of order \(O(nr_{n}^{5})\).

(c2). If $l=l_1$ and $j=j_1\neq k\neq k_1\geq3$, it follows from Lemma \ref{propmain} that
\begin{align}\nonumber
&\mathbb{E}\big[\left|\mathbb{E}\big[T_{1jk}T_{2jk_1}\bar{A}_{1l}^2\big|X_1,X_2\big]\right|\big]\\ \nonumber
&=\mathbb{E}\big[\left|\mathbb{E}\big[(A_{1j}A_{jk}-A_{1j}A_{1k})(A_{2j}A_{jk_1}-A_{2j}A_{2k_1})\big|X_1,X_2\big]\mathbb{E}\big[\bar{A}_{1l}^2\big|X_1,X_2\big]\right|\big]\\ \nonumber
&=O(r_n)\mathbb{E}\big[\mathbb{E}\big[\left|(A_{1j}A_{jk}-A_{1j}A_{1k})(A_{2j}A_{jk_1}-A_{2j}A_{2k_1})\right|\big|X_1,X_2\big]\big]\\ \nonumber
&\leq O(r_n)\mathbb{E}\big[(A_{1j}A_{jk}+A_{1j}A_{1k})(A_{2j}A_{jk_1}+A_{2j}A_{2k_1})\big]\\ \nonumber
&=O(r_n^5).
\end{align}
In this case, there are at most $n^4$ such index tuples $(j,k,l,j_1,k_1,l_1)$. 
Consequently, the sum in (\ref{lem5eqpoof8}) over these indices is   \(O( r_{n} )\). The same bound applies to the case $l=l_1$  and $j=k_1\neq j_1\neq k_1\geq3$.

(c3). If $l=l_1$ and $k=j_1\neq j\neq k_1\geq3$, it follows from Lemma \ref{propmain} that
\begin{align*}\nonumber
&\mathbb{E}\big[\left|\mathbb{E}\big[T_{1jk}T_{2kk_1}\bar{A}_{1l}^2\big|X_1,X_2\big]\right|\big]\\
&=\mathbb{E}\big[\left|\mathbb{E}\big[(A_{1j}A_{jk}-A_{1j}A_{1k})(A_{2k}A_{kk_1}-A_{2k}A_{2k_1})\big|X_1,X_2\big]\mathbb{E}\big[\bar{A}_{1l}^2\big|X_1,X_2\big]\right|\big]\\
&=O(r_n)\mathbb{E}\big[\mathbb{E}\big[\left|(A_{1j}A_{jk}-A_{1j}A_{1k})(A_{2k}A_{kk_1}-A_{2k}A_{2k_1})\right|\big|X_1,X_2\big]\big]\\
&\leq O(r_n)\mathbb{E}\big[(A_{1j}A_{jk}+A_{1j}A_{1k})(A_{2k}A_{kk_1}+A_{2k}A_{2k_1})\big]\\
&=O(r_n^5).
\end{align*}
In this case, there are at most $n^4$ such index tuples $(j,k,l,j_1,k_1,l_1)$. 
Consequently, the sum in (\ref{lem5eqpoof8}) over these indices is   \(O(r_{n})\). The same bound applies to the case $l=l_1$  and $k=k_1\neq j\neq j_1\geq3$.

(c4). If $l=l_1$ and $\{j,k\}=\{j_1,k_1\}$, it follows from Lemma \ref{propmain} that
\begin{align*}\nonumber
\mathbb{E}\big[\left|\mathbb{E}\big[T_{1jk}T_{2kk_1}\bar{A}_{1l}^2\big|X_1,X_2\big]\right|\big]\leq\mathbb{E}\big[|T_{1jk}|]|X_1,X_2\big] \mathbb{E}[\bar{A}_{1l}^2\big|X_1,X_2\big]=O(r_n^3).
\end{align*}
In this case, there are at most $n^3$ such index tuples $(j,k,l,j_1,k_1,l_1)$. 
Consequently, the sum in (\ref{lem5eqpoof8}) over these indices is   \(O\left(\frac{1}{nr_{n}}\right)\).

(c5). If $l=l_1$, $j=2$ and $k\neq j_1\neq k_1\geq3$,
\begin{align}\nonumber
\mathbb{E}\big[T_{12k}T_{2j_1k_1}\bar{A}_{1l}^2\big|X_1,X_2\big]
&=\mathbb{E}\big[T_{12k}\big|X_1,X_2\big]\mathbb{E}\big[T_{2j_1k_1}\big|X_1,X_2\big]\mathbb{E}\big[\bar{A}_{1l}^2\big|X_1,X_2\big]\\ \nonumber
&=\mathbb{E}\big[(A_{2k}+A_{1k})\big|X_1,X_2\big]\mathbb{E}\big[T_{2j_1k_1}\big|X_1,X_2\big]\mathbb{E}\big[\bar{A}_{1l}^2\big|X_1,X_2\big] \\
&=O(r_n^5).
\end{align}
In this case, there are at most $n^4$ such index tuples $(j,k,l,j_1,k_1,l_1)$. 
Thus, the sum in (\ref{lem5eqpoof8}) over these indices is   \(O(r_{n})\).

(c6). If $l=l_1$, $j=2$ and $k=j_1\neq k_1\geq3$,
\begin{align}\nonumber
\big|\mathbb{E}\big[T_{12k}T_{2kk_1}\bar{A}_{1l}^2\big|X_1,X_2\big]\big|
&\leq 2\mathbb{E}\big[|T_{2kk_1}|\big|X_1,X_2\big]\mathbb{E}\big[\bar{A}_{1l}^2\big|X_1,X_2\big]\\ \nonumber
&=O(r_n^3).
\end{align}
In this case, there are at most $n^3$ such index tuples $(j,k,l,j_1,k_1,l_1)$. 
Thus, the sum in (\ref{lem5eqpoof8}) over these indices is   \(O(\frac{1}{nr_{n}})\). The same bound applies to the case $l=l_1$, $j=2$ and $k=k_1\neq j_1\geq3$.

(c7). If $l=l_1$, $j=2$ and $j_1=1$, then $k,k_1\geq3$. For $k\neq k_1$, we have
\begin{align}\nonumber
\big|\mathbb{E}\big[T_{12k}T_{21k_1}\bar{A}_{1l}^2\big|X_1,X_2\big]\big|
\leq 2\mathbb{E}\big[(A_{2k}+A_{1k})\big|X_1,X_2\big]\mathbb{E}\big[\bar{A}_{1l}^2\big|X_1,X_2\big]=O(r_n^2).
\end{align}
In this case, there are at most $n^2$ such index tuples $(j,k,l,j_1,k_1,l_1)$. 
Thus, the sum in (\ref{lem5eqpoof8}) over these indices is   \(O(\frac{1}{n^2r_{n}^2})\). The same bound applies to the case $l=l_1$, $j=2$ and $k_1=1$, $k\neq j_1$.

(c8). For $l=l_1$, $j=2$ and $j_1=1$, and $k=k_1$, we have
\begin{align}\nonumber
\big|\mathbb{E}\big[T_{12k}T_{21k}\bar{A}_{1l}^2\big|X_1,X_2\big]\big|
\leq 4\mathbb{E}[A_{2k}+A_{1k}]\mathbb{E}\big[\bar{A}_{1l}^2\big|X_1,X_2\big]=O(r_n^2).
\end{align}
In this case, there are at most $n^2$ such index tuples $(j,k,l,j_1,k_1,l_1)$. 
Thus, the sum in (\ref{lem5eqpoof8}) over these indices is   \(O(\frac{1}{n^2r_{n}^2})\). The same bound applies to the case $l=l_1$, $j=2$ and $k_1=1$, $j_1=k$.

(c9). If $l=l_1$, $k=2$ and $j\neq j_1\neq k_1\geq3$, then
\begin{align}\nonumber
\mathbb{E}\big[T_{1j2}T_{2j_1k_1}\bar{A}_{1l}^2\big|X_1,X_2\big]
&=\mathbb{E}\big[T_{1j2}\big|X_1,X_2\big]\mathbb{E}\big[T_{2j_1k_1}\big|X_1,X_2\big]\mathbb{E}\big[\bar{A}_{1l}^2\big|X_1,X_2\big]\\ \nonumber
&= \mathbb{E}\big[(A_{j2}+A_{1j})\big|X_1,X_2\big]\mathbb{E}\big[T_{2j_1k_1}\big|X_1,X_2\big]\mathbb{E}\big[\bar{A}_{1l}^2\big|X_1,X_2\big] \\
&=O(r_n^5).
\end{align}
In this case, there are at most $n^4$ such index tuples $(j,k,l,j_1,k_1,l_1)$. 
Thus, the sum in (\ref{lem5eqpoof8}) over these indices is \(O(r_{n})\).

(c10). If $l=l_1$, $k=2$ and $j=j_1\neq k_1\geq3$, then
\begin{align}\nonumber
|\mathbb{E}\big[T_{1j2}T_{2jk_1}\bar{A}_{1l}^2\big|X_1,X_2\big]|
&\leq\mathbb{E}\big[|T_{2j_1k_1}|\big|X_1,X_2\big]\mathbb{E}\big[\bar{A}_{1l}^2\big|X_1,X_2\big]=O(r_n^3).
\end{align}
In this case, there are at most $n^3$ such index tuples $(j,k,l,j_1,k_1,l_1)$. 
Thus, the sum in (\ref{lem5eqpoof8}) over these indices is \(O\left(\frac{1}{nr_{n}}\right)\). The same bound applies to the case $l=l_1$, $k=2$ and $j=k_1\neq j_1\geq3$.

(c11). If $l=l_1$, $k=2$, $j_1=1$, $j\neq k_1\geq3$, then
\begin{align}\nonumber
|\mathbb{E}\big[T_{1j2}T_{21k_1}\bar{A}_{1l}^2\big|X_1,X_2\big]|
&\leq4\mathbb{E}\big[A_{1j}(A_{1k_1}+A_{2k_1})\big|X_1,X_2\big]\mathbb{E}\big[\bar{A}_{1l}^2\big|X_1,X_2\big]=O(r_n^3).
\end{align}
In this case, there are at most $n^3$ such index tuples $(j,k,l,j_1,k_1,l_1)$. 
Thus, the sum in (\ref{lem5eqpoof8}) over these indices is \(O\left(\frac{1}{nr_{n}}\right)\). The same bound applies to the case $l=l_1$, $k=2$, $k_1=1$, $j_1\neq k\geq3$.

(c12). If $l=l_1$, $k=2$, $j_1=1$, $j=k_1\geq3$, then
\begin{align}\nonumber
|\mathbb{E}\big[T_{1j2}T_{21j}\bar{A}_{1l}^2\big|X_1,X_2\big]|
&\leq4\mathbb{E}\big[A_{1j}\big|X_1,X_2\big]\mathbb{E}\big[\bar{A}_{1l}^2\big|X_1,X_2\big]=O(r_n^2).
\end{align}
In this case, there are at most $n^2$ such index tuples $(j,k,l,j_1,k_1,l_1)$. 
Thus, the sum in (\ref{lem5eqpoof8}) over these indices is \(O\left(\frac{1}{n^2r_{n}^2}\right)\). The same bound applies to the case $l=l_1$, $k=2$, $k_1=1$, $j_1=k\geq3$.

(c13). If $l=2$, then $j,k\geq3$. If $l_1=1$, then  $j_1, k_1\geq3$. When $\{j,k\}\cap\{j_1,k_1\}=\emptyset$, we have
\begin{align}\nonumber
\mathbb{E}\big[T_{1jk}T_{2j_1k_1}\bar{A}_{12}^2\big|X_1,X_2\big]=\bar{A}_{12}^2\mathbb{E}\big[T_{1jk}\big|X_1,X_2\big]\mathbb{E}\big[T_{1j_1k_1}\big|X_1,X_2\big]=O(r_n^8).
\end{align}
In this case, there are at most $n^4$ such index tuples $(j,k,l,j_1,k_1,l_1)$. 
Thus, the sum in (\ref{lem5eqpoof8}) over these indices is \(O\left(r_n^4\right)\).

(c14). If $l=2$, then $j,k\geq3$. If $l_1=1$, then  $j_1, k_1\geq3$. When $\{j,k\}=\{j_1,k_1\}$, we have
\begin{align}\nonumber
|\mathbb{E}\big[T_{1jk}T_{2j_1k_1}\bar{A}_{12}^2\big|X_1,X_2\big]|\leq \bar{A}_{12}^2\mathbb{E}\big[|T_{1jk}|\big|X_1,X_2\big]=O(r_n^2).
\end{align}
In this case, there are at most $n^2$ such index tuples $(j,k,l,j_1,k_1,l_1)$. 
Thus, the sum in (\ref{lem5eqpoof8}) over these indices is \(O\left(\frac{1}{n^2r_{n}^2}\right)\).

(c15). If $l=2$, $l_1=1$ and $j=j_1$, we have
\begin{align}\nonumber
|\mathbb{E}\big[T_{1jk}T_{2jk_1}\bar{A}_{12}^2\big|X_1,X_2\big]|\leq \bar{A}_{12}^2\mathbb{E}\big[|T_{1jk}|A_{jk_1}\big|X_1,X_2\big]=O(r_n^3).
\end{align}
In this case, there are at most $n^3$ such index tuples $(j,k,l,j_1,k_1,l_1)$. 
Thus, the sum in (\ref{lem5eqpoof8}) over these indices is \(O\left(\frac{1}{nr_{n}}\right)\). The same bound holds for the cases (a) $l=2$, $l_1=1$ and $j=k_1$; (b) $l=2$, $l_1=1$ and $k=j_1$;  (c) $l=2$, $l_1=1$ and $k=k_1$.

(c16). If $l=2$ and $l_1=j$, then $j,k\geq3$. When  $j_1\neq k\neq k_1\geq3$, it follows from Lemma \ref{propmain} that
\begin{align}\nonumber
\mathbb{E}\big[T_{1jk}T_{2j_1k_1}\bar{A}_{12}\bar{A}_{2j}\big|X_1,X_2\big]=\mathbb{E}\big[T_{1jk}\bar{A}_{2j}\big|X_1,X_2\big]\mathbb{E}\big[T_{2j_1k_1}\big|X_1,X_2\big]\bar{A}_{12}=O(r_n^5).
\end{align}
In this case, there are at most $n^4$ such index tuples $(j,k,l,j_1,k_1,l_1)$. 
Thus, the sum in (\ref{lem5eqpoof8}) over these indices is \(O\left(r_n\right)\).

(c17). If $l=2$ and $l_1=j$, then $j,k\geq3$. When  $j_1=k\neq k_1\geq3$, it follows from Lemma \ref{propmain} that
\begin{align}\nonumber
|\mathbb{E}\big[T_{1jk}T_{2kk_1}\bar{A}_{12}\bar{A}_{2j}\big|X_1,X_2\big]|\leq4\mathbb{E}\big[|T_{1jk}|(A_{kk_1}+A_{2k_1})\big|X_1,X_2\big]=O(r_n^3).
\end{align}
In this case, there are at most $n^3$ such index tuples $(j,k,l,j_1,k_1,l_1)$. 
Thus, the sum in (\ref{lem5eqpoof8}) over these indices is \(O\left(\frac{1}{nr_{n}}\right)\). The same bound applies to the case $l=2$ and $l_1=j$ and $j_1\neq k=k_1\geq3$.

(c18). If $l=2$ and $l_1=j$, then $j,k\geq3$. When  $j_1=1$ and $k\neq k_1$, we have
\begin{align}\nonumber
|\mathbb{E}\big[T_{1jk}T_{21k_1}\bar{A}_{12}\bar{A}_{2j}\big|X_1,X_2\big]|\leq4\mathbb{E}\big[|T_{1jk}|(A_{1k_1}+A_{2k_1})\big|X_1,X_2\big]=O(r_n^3).
\end{align}
In this case, there are at most $n^3$ such index tuples $(j,k,l,j_1,k_1,l_1)$. 
Thus, the sum in (\ref{lem5eqpoof8}) over these indices is \(O\left(\frac{1}{nr_{n}}\right)\). The same bound applies to the case $l=2$, $l_1=j$, $k_1=1$ and $k\neq j_1$.

(c19). If $l=2$ and $l_1=j$, then $j,k\geq3$. When  $j_1=1$ and $k=k_1$, we have
\begin{align}\nonumber
|\mathbb{E}\big[T_{1jk}T_{21k_1}\bar{A}_{12}\bar{A}_{2j}\big|X_1,X_2\big]|\leq4\mathbb{E}\big[|T_{1jk}|\big|X_1,X_2\big]=O(r_n^2).
\end{align}
In this case, there are at most $n^2$ such index tuples $(j,k,l,j_1,k_1,l_1)$. 
Thus, the sum in (\ref{lem5eqpoof8}) over these indices is \(O\left(\frac{1}{n^2r_{n}^2}\right)\). The same bound applies to the case $l=2$, $l_1=j$, $k_1=1$ and $k=j_1$.

(c20). If $l=2$ and $l_1=k$, then $j,k\geq3$. When $j\neq j_1\neq k_1\geq3$, by an argument similar to that for (c16),  the sum in (\ref{lem5eqpoof8}) over these indices is \(O\left(r_{n}^2\right)\).

(c21). If $l=2$ and $l_1=k$, then $j,k\geq3$. When $j=j_1\neq k_1\geq3$ or $j=k_1\neq j_1$, by an argument similar to that for (c17),  the sum in (\ref{lem5eqpoof8}) over these indices is  \(O\left(\frac{1}{nr_{n}}\right)\).

(c22). If $l=2$,  $l_1=k$, $j_1=1$ and $j\neq k_1\geq3$, then, by an argument similar to that for (c18),  the sum in (\ref{lem5eqpoof8}) over these indices is  \(O\left(\frac{1}{nr_{n}}\right)\). The same bound holds for the case $l=2$,  $l_1=k$, $k_1=1$ and $j_1\neq k\geq3$.

(c23). If $l=2$,  $l_1=k$, $j_1=1$ and $j=k_1\geq3$, then, by an argument similar to that for (c19),  the sum in (\ref{lem5eqpoof8}) over these indices is  \(O\left(\frac{1}{n^2r_{n}^2}\right)\). The same bound holds for the case $l=2$,  $l_1=k$, $k_1=1$ and $j_1=k\geq3$.

(c24). The case \(l=j_{1}\), \(l_{1}=1\) is analogous to the case \(l=2,l_{1}=j\). By a similar arguments in (c16)-(c19), the sum in (\ref{lem5eqpoof8}) over these indices is  \(O\left(\frac{1}{nr_{n}}+r_n\right)\).

(c25).  If $l=j_1$ and $l_1=j$, then $k\not\in\{j_1,l_1\}$ and $k_1\not\in\{j,l\}$.  When  $k\neq k_1\geq3$, then
\begin{align}\nonumber
\mathbb{E}\big[\big|\mathbb{E}\big[T_{1jk}T_{2j_1k_1}\bar{A}_{1j_1}\bar{A}_{2j}\big|X_1,X_2\big]|\big]=\mathbb{E}\big[|T_{1jk}\bar{A}_{2j}T_{2j_1k_1}\bar{A}_{1j_1}|\big]=O(r_n^6).
\end{align}
In this case, there are at most $n^4$ such index tuples $(j,k,l,j_1,k_1,l_1)$. 
Thus, the sum in (\ref{lem5eqpoof8}) over these indices is \(O\left(r_n^2\right)\).

(c26).  If $l=j_1$ and $l_1=j$, then $k\not\in\{j_1,l_1\}$ and $k_1\not\in\{j,l\}$.  When  $k=k_1\geq3$, then
\begin{align}\nonumber
\mathbb{E}\big[T_{1jk}T_{2j_1k}\bar{A}_{1j_1}\bar{A}_{2j}\big|X_1,X_2\big]&=\mathbb{E}\big[T_{1jk}\bar{A}_{2j}\big|X_1,X_2\big]\mathbb{E}\big[T_{2j_1k}\bar{A}_{1j_1}\big|X_1,X_2\big]=O(r_n^4).
\end{align}
In this case, there are at most $n^3$ such index tuples $(j,k,l,j_1,k_1,l_1)$. 
Thus, the sum in (\ref{lem5eqpoof8}) over these indices is \(O\left(r_n\right)\).

(c27).  If $l=j_1$ and $l_1=j$, then $k\not\in\{j_1,l_1\}$ and $k_1\not\in\{j,l\}$.  When  $k=2$ and $k_1\geq3$, then
\begin{align}\nonumber
|\mathbb{E}\big[T_{1j2}T_{2j_1k_1}\bar{A}_{1j_1}\bar{A}_{2j}\big|X_1,X_2\big]|&\leq4\mathbb{E}\big[A_{1j}|T_{2j_1k_1}|\big|X_1,X_2\big]=O(r_n^3).
\end{align}
In this case, there are at most $n^3$ such index tuples $(j,k,l,j_1,k_1,l_1)$. 
Thus, the sum in (\ref{lem5eqpoof8}) over these indices is \(O\left(\frac{1}{nr_n}\right)\).  The same bound applies to the case $l=j_1$, $l_1=j$, $k_1=1$ and $k\geq3$.

(c28).  If $l=j_1$ and $l_1=j$, then $k\not\in\{j_1,l_1\}$ and $k_1\not\in\{j,l\}$.  When  $k=2$ and $k_1=1$, then
\begin{align}\nonumber
|\mathbb{E}\big[T_{1j2}T_{2j_11}\bar{A}_{1j_1}\bar{A}_{2j}\big|X_1,X_2\big]|&\leq4\mathbb{E}\big[|\bar{A}_{1j_1}\bar{A}_{2j}|\big|X_1,X_2\big]=O(r_n^2).
\end{align}
In this case, there are at most $n^2$ such index tuples $(j,k,l,j_1,k_1,l_1)$. 
Thus, the sum in (\ref{lem5eqpoof8}) over these indices is \(O\left(\frac{1}{nr_n}\right)\).

(c29).  If $l=j_1$ and $l_1=k$, then $j\not\in\{j_1,l_1\}$ and $k_1\not\in\{k,l\}$.  When  $j\neq k_1\geq3$, the bound in (c25) holds by a similar argument.

(c30).  If $l=j_1$ and $l_1=k$, then $j\not\in\{j_1,l_1\}$ and $k_1\not\in\{k,l\}$.  When  $j=k_1\geq3$, the bound in (c26) holds by a similar argument.

(c31).  If $l=j_1$ and $l_1=k$, then $j\not\in\{j_1,l_1\}$ and $k_1\not\in\{k,l\}$.  When  $j=2$ or $k_1=1$, the bounds in (c27) or (c28) hold by a similar argument.

(c32). The case $l=k_1$ and $l_1=1$ is analogous to the case $l=2$ and $l_1=k$. The bounds in (c20)-(c23) hold.

(c33). The case $l=k_1$ and $l_1=j$ is analogous to the case $l=j_1$ and $l_1=k$. The bounds in (c29)-(c31) hold.

(c34). If $l=k_1$ and $l_1=k$, then $j\not\in\{k_1,l_1\}$ and $j_1\not\in\{k,l\}$. When $j\neq j_1\geq3$, the bound in (c25) holds by a similar argument.

(c35). If $l=k_1$ and $l_1=k$, then $j\not\in\{k_1,l_1\}$ and $j_1\not\in\{k,l\}$. When $j=j_1\geq3$, the bound in (c26) holds by a similar argument.

(c36). If $l=k_1$ and $l_1=k$, then $j\not\in\{k_1,l_1\}$ and $j_1\not\in\{k,l\}$. When $j=2$ and $j_1\geq3$ or $j_1=1$ and $j\geq3$, the bound in (c27) holds by a similar argument.

(c37). If $l=k_1$ and $l_1=k$, then $j\not\in\{k_1,l_1\}$ and $j_1\not\in\{k,l\}$. When $j=2$ and $j_1=1$, the bound in (c28) holds by a similar argument.

In summary, we have
\begin{align*} 
\mathbb{E}\left[\frac{R_1R_2}{\mu_1^2\mu_2^2}\right]=\mathbb{E}\left[\frac{\mathbb{E}[R_1R_2|X_1,X_2]}{\mu_1^2\mu_2^2}\right]=O\left(nr_n^5+r_n+\frac{1}{nr_n}\right).
\end{align*}
Then the proof is complete.

\qed

\subsection{Proof of Lemma \ref{lemmapp} }
Recall that $\mu_t=(n-1)\mathbb{E}[A_{12}|X_t]$ for $t\in\{1,2\}$. Then
\begin{align} \nonumber
&\mathbb{E}\left[\frac{(n+1)^2\mathbb{E}\left[A_{12}\mid X_1\right]\mathbb{E}\left[A_{12}\mid X_2\right]}{\mu_1^2\mu_2^2}\mathbb{E}\left[P_1P_2|X_1,X_2\right]\right]\\ \label{lemmapepeq1}
&=\mathbb{E}\left[\frac{(n+1)^2}{(n-1)^4\mathbb{E}\left[A_{12}\mid X_1\right]\mathbb{E}\left[A_{12}\mid X_2\right]}\sum_{\substack{j\ne k \ne 1\\ j_1\ne k_1\neq 2}}\mathbb{E}\left[T_{1jk}T_{2j_1k_1}|X_1,X_2\right]\right].
\end{align}
Next, we shall derive the leading term of (\ref{lemmapepeq1}). When
 $j\ne k \ne j_1\ne k_1\geq3$,  $T_{1jk}$ and $T_{2j_1k_1}$ are conditionally independent given $X_1$ and $X_2$. Then  we have
\begin{align*}
\mathbb{E}\left[T_{1jk}T_{2j_1k_1}|X_1,X_2\right]&=\mathbb{E}\left[T_{1jk}|X_1\right]\mathbb{E}\left[T_{2j_1k_1}|X_2\right]\\
&=\mathbb{E}\left[(A_{13}A_{34}
    -A_{13}A_{14})|X_1\right]\mathbb{E}\left[(A_{25}A_{56}
    -A_{25}A_{26})|X_2\right].
\end{align*}
Lemma \ref{propmain} implies that \(\mathbb{E}[A_{12}|X_{t}]=\Theta (r_{n})\) and \(\mathbb{E}[T_{123}|X_{t}]=O(r_{n}^{4})\). Therefore, the sum over indices  $j\ne k \ne j_1\ne k_1\geq3$ in (\ref{lemmapepeq1}) is equal to 
\begin{align}\nonumber
&\mathbb{E}\left[\frac{(n+1)^2}{(n-1)^4\mathbb{E}\left[A_{12}\mid X_1\right]\mathbb{E}\left[A_{12}\mid X_2\right]}\sum_{\substack{j\ne k \ne j_1\ne k_1\geq 3}}T_{1jk}T_{2j_1k_1}\right]\\ \nonumber
&=\mathbb{E}\left[\frac{(n+1)^2}{(n-1)^4\mathbb{E}\left[A_{12}\mid X_1\right]\mathbb{E}\left[A_{12}\mid X_2\right]}\sum_{\substack{j\ne k \ne j_1\ne k_1\geq 3}}\mathbb{E}\left[T_{1jk}T_{2j_1k_1}|X_1,X_2\right]\right]\\ \nonumber
&= \frac{(n+1)^2(n-3)(n-4)(n-5)(n-6)}{(n-1)^4}\left(\mathbb{E}\left[\frac{\mathbb{E}\left[(A_{13}A_{34}
    -A_{13}A_{14})|X_1\right]}{\mathbb{E}\left[A_{12}\mid X_1\right]}\right]\right)^2\\ \label{lemmapepeq2}
&=\left(n\mathbb{E}\left[\frac{\mathbb{E}\left[(A_{13}A_{34}
    -A_{13}A_{14})|X_1\right]}{\mathbb{E}\left[A_{12}\mid X_1\right]}\right]\right)^2+O(nr_n^6).
\end{align}

Suppose $\{j, k\}=\{j_1, k_1\}$. In this case, $j,k,j_1,k_1\geq3$. If $j=j_1$ and $k=k_1$, then
\begin{align*}
|T_{1jk}T_{2jk}|&\leq A_{1j}A_{2j}A_{jk}+A_{1j}A_{2j}A_{jk}A_{2k}+A_{1j}A_{1k}A_{2j}A_{jk}+A_{1j}A_{2j}A_{1k}A_{2k}\\
&\leq 3A_{1j}A_{2j}A_{jk}+A_{1j}A_{2j}A_{2k}.
\end{align*}
By Lemma \ref{propmain}, it is easy to obtain
\begin{align*}
\mathbb{E}[A_{1j}A_{2j}A_{jk}]=O(r_n^3),\ \ \ \ \ \mathbb{E}[A_{1j}A_{2j}A_{2k}]=O(r_n^3).
\end{align*}
Therefore, we have
\begin{align}\nonumber
&\mathbb{E}\left[\frac{(n+1)^2}{(n-1)^4\mathbb{E}\left[A_{12}\mid X_1\right]\mathbb{E}\left[A_{12}\mid X_2\right]}\left|\sum_{\substack{j\ne k\geq3 }}T_{1jk}T_{2jk}\right|\right]\\ \nonumber
&\leq\mathbb{E}\left[\frac{(n+1)^2}{(n-1)^4\mathbb{E}\left[A_{12}\mid X_1\right]\mathbb{E}\left[A_{12}\mid X_2\right]}\sum_{\substack{j\ne k\geq3}}\mathbb{E}\left[|T_{1jk}T_{2jk}||X_1,X_2\right]\right]\\ \nonumber
&= O\left(\frac{1}{r_n^2}\right)\mathbb{E}\left[\mathbb{E}\left[|T_{1jk}T_{2jk}||X_1,X_2\right]\right]\\ \nonumber
&= O\left(\frac{1}{r_n^2}\right)\mathbb{E}\left[|T_{1jk}T_{2jk}|\right]\\ \label{lemmapepeq3}
&=O(r_n).
\end{align}
If $j=k_1$ and $k=j_1$, then
\begin{align*}
|T_{1jk}T_{2kj}|&\leq A_{1j}A_{jk}A_{2k}+A_{1j}A_{jk}A_{2k}A_{2j}+A_{1j}A_{1k}A_{2k}A_{kj}+A_{1j}A_{1k}A_{2k}A_{2j}\\
&\leq A_{1j}A_{jk}A_{2k}+A_{1j}A_{jk}A_{2j}+A_{1k}A_{2k}A_{kj}+A_{1j}A_{1k}A_{2j}
\end{align*}
Hence, $\mathbb{E}\left[|T_{1jk}T_{2kj}|\right]=O(r_n^3)$. Similar to (\ref{lemmapepeq3}), we have
\begin{align}\label{lemmapepeq4}
\mathbb{E}\left[\frac{(n+1)^2}{(n-1)^4\mathbb{E}\left[A_{12}\mid X_1\right]\mathbb{E}\left[A_{12}\mid X_2\right]}\left|\sum_{\substack{j\ne k\geq3 }}T_{1jk}T_{2kj}\right|\right]=O(r_n).
\end{align}

Suppose $|\{j, k\}\cap\{j_1, k_1\}|=1$ and $j,k,j_1,k_1\geq3$. If $j=j_1$ and $k\neq k_1$, by Lemma \ref{propmain}, we get
\begin{align}\nonumber
&\frac{1}{\mathbb{E}\left[A_{12}\mid X_1\right]\mathbb{E}\left[A_{12}\mid X_2\right]}\mathbb{E}[T_{1jk}T_{2jk_1}|X_1,X_2]\\ \nonumber
&= \frac{\mathbb{E}\big[(A_{1j}A_{2j}A_{jk}A_{jk_1}-A_{1j}A_{2j}A_{jk}A_{2k_1}-A_{1j}A_{1k}A_{2j}A_{jk_1}+A_{1j}A_{2j}A_{1k}A_{2k_1})|X_1,X_2\big]}{\mathbb{E}\left[A_{12}\mid X_1\right]\mathbb{E}\left[A_{12}\mid X_2\right]}\\ \nonumber
&=\frac{1}{\mathbb{E}\left[A_{12}\mid X_1\right]\mathbb{E}\left[A_{12}\mid X_2\right]}\Big((2r_n)^2\mathbb{E}\big[A_{1j}A_{2j}f^2(X_j)|X_1,X_2\big]\\ \nonumber
&\quad-(2r_n)^2\mathbb{E}\big[A_{1j}A_{2j}f(X_j)f(X_2)|X_1,X_2\big]-(2r_n)^2\mathbb{E}\big[A_{1j}A_{2j}f(X_j)f(X_1)|X_1,X_2\big]\\  \nonumber
&\quad+(2r_n)^2\mathbb{E}\big[A_{1j}A_{2j}f(X_1)f(X_2)|X_1,X_2\big]+O(r_n^4)\mathbb{E}\big[A_{1j}A_{2j}|X_1,X_2\big]\\ \label{lemmapepeq5}
&\quad+O(r_n^4)\mathbb{E}\big[A_{1j}A_{2j}|X_1,X_2\big]\Big)+O(r_n^4)\mathbb{E}\big[A_{1j}A_{2j}|X_1,X_2\big]+O(r_n^4)\mathbb{E}\big[A_{1j}A_{2j}|X_1,X_2\big].
\end{align}
We will derive asymptotic expressions or upper bounds for the expectation of each term in (\ref{lemmapepeq5}).

By Lemma \ref{propmain}, $\mathbb{E}\big[A_{12}|X_1\big]=2r_nf(X_1)+O(r_n^3)$. It follows from Assumption \ref{assumptiona} that $f(x)$ is bounded away from zero and  $f(x)^2$ is bounded.  Then the expectation of the first term in (\ref{lemmapepeq5}) is given by
\begin{align*}\nonumber
(2r_n)^2\mathbb{E}\left[\frac{\mathbb{E}\big[A_{1j}A_{2j}f^2(X_j)|X_1,X_2\big]}{\mathbb{E}\big[A_{13}|X_1\big]\mathbb{E}\big[A_{24}|X_2\big]}\right]&=\mathbb{E}\left[\frac{\mathbb{E}\big[A_{1j}A_{2j}f^2(X_j)|X_1,X_2\big]}{f(X_1)f(X_2)}\right]+O(r_n^2)\mathbb{E}[A_{1j}A_{2j}]\\ \nonumber
&=\mathbb{E}\left[\frac{A_{1j}A_{2j}f^2(X_j)}{f(X_1)f(X_2)}\right]+O(r_n^4)\\ 
&=\mathbb{E}\left[f^2(X_j)\mathbb{E}\left[\frac{A_{1j}A_{2j}}{f(X_1)f(X_2)}\Bigg|X_j\right]\right]+O(r_n^4).
\end{align*}
Note that
\begin{align*}
&\mathbb{E}\left[\frac{A_{1j}A_{2j}}{f(X_1)f(X_2)}\Bigg|X_j\right]\\
&=I[r_n\leq X_j\leq 1-r_n]\int_{X_j-r_n}^{X_j+r_n}dx_1\int_{X_j-r_n}^{X_j+r_n}dx_2\\
&\quad+I[0\leq X_j\leq r_n]\left(\int_{0}^{X_j+r_n}dx_1+\int_{1-r_n+X_j}^{1}dx_1\right)\left(\int_{0}^{X_j+r_n}dx_2+\int_{1-r_n+X_j}^{1}dx_2\right)\\
&\quad+I[1-r_n\leq X_j\leq 1]\left(\int_{X_j-r_n}^1dx_1+\int_0^{r_n+X_j-1}dx_1\right)\left(\int_{X_j-r_n}^1dx_2+\int_0^{r_n+X_j-1}dx_2\right)\\
&=(2r_n)^2.
\end{align*}
It then follows that
\begin{align}\label{lemmapepeq6}
(2r_n)^2\mathbb{E}\left[\frac{\mathbb{E}\big[A_{1j}A_{2j}f^2(X_j)|X_1,X_2\big]}{\mathbb{E}\big[A_{13}|X_1\big]\mathbb{E}\big[A_{24}|X_2\big]}\right]=(2r_n)^2\mathbb{E}\left[f^2(X_j)\right]+O(r_n^4).
\end{align}

The expectation of the second term in (\ref{lemmapepeq5}) is given by
\begin{align}\nonumber
&(2r_n)^2\mathbb{E}\left[\frac{\mathbb{E}\big[A_{1j}A_{2j}f(X_j)f(X_2)|X_1,X_2\big]}{\mathbb{E}\big[A_{13}|X_1\big]\mathbb{E}\big[A_{24}|X_2\big]}\right]\\ \nonumber
&=\mathbb{E}\left[\frac{\mathbb{E}\big[A_{1j}A_{2j}f(X_j)f(X_2)|X_1,X_2\big]}{f(X_1)f(X_2)}\right]+O(r_n^2)\mathbb{E}[A_{1j}A_{2j}]\\ \nonumber
&=\mathbb{E}\left[\frac{A_{1j}A_{2j}f(X_j)}{f(X_1)}\right]+O(r_n^4)\\ \nonumber
&=\mathbb{E}\left[f(X_j)A_{2j}\mathbb{E}\left[\frac{A_{1j}}{f(X_1)}\Bigg|X_j,X_2\right]\right]+O(r_n^4)\\ \nonumber
&=2r_n\mathbb{E}\left[f(X_j)A_{2j}\right]+O(r_n^4)\\ \nonumber
&=2r_n\mathbb{E}\left[f(X_j)\mathbb{E}[A_{2j}|X_j]\right]+O(r_n^4)\\ \nonumber
&=2r_n\mathbb{E}\left[2r_nf^2(X_j)+\frac{f(X_j)f''(X_j)}{3} r_n^3 \right]+O(r_n^4)\\ \label{lemmapepeq7}
&=(2r_n)^2\mathbb{E}\left[f^2(X_j)\right]+O(r_n^4),
\end{align}
where the fourth equality follows from the following argument
\begin{align}\nonumber
\mathbb{E}\left[\frac{A_{1j}}{f(X_1)}\Bigg|X_j,X_2\right]
&=I[r_n\leq X_j\leq 1-r_n]\int_{X_j-r_n}^{X_j+r_n}dx_1\\ \nonumber
&\quad+I[0\leq X_j\leq r_n]\left(\int_{0}^{X_j+r_n}dx_1+\int_{1-r_n+X_j}^{1}dx_1\right)\\ \nonumber
&\quad+I[1-r_n\leq X_j\leq 1]\left(\int_{X_j-r_n}^1dx_1+\int_0^{r_n+X_j-1}dx_1\right)\\ \label{aera1}
&=(2r_n).
\end{align}

The expectation of the third term  in (\ref{lemmapepeq5}) is identical to the second term. The expectation of the fourth term in (\ref{lemmapepeq5}) is given by
\begin{align}\nonumber
&(2r_n)^2\mathbb{E}\left[\frac{\mathbb{E}\big[A_{1j}A_{2j}f(X_1)f(X_2)|X_1,X_2\big]}{\mathbb{E}\big[A_{13}|X_1\big]\mathbb{E}\big[A_{24}|X_2\big]}\right]\\ \nonumber
&=\mathbb{E}\left[\frac{\mathbb{E}\big[A_{1j}A_{2j}f(X_1)f(X_2)|X_1,X_2\big]}{f(X_1)f(X_2)}\right]+O(r_n^2)\mathbb{E}[A_{1j}A_{2j}]\\ \nonumber
&=\mathbb{E}[\mathbb{E}[A_{1j}|X_j]\mathbb{E}[A_{2j}|X_j]]+O(r_n^4)\\ \nonumber
&=\mathbb{E}\left[(2r_n)^2f^2(X_j) + \frac{r_n^4 }{3} f(X_j) f''(X_j) \right]+O(r_n^4)\\ \label{lemmapepeq8}
&=(2r_n)^2\mathbb{E}\left[f^2(X_j)\right]+O(r_n^4).
\end{align}

Combining (\ref{lemmapepeq5})-(\ref{lemmapepeq8}) yields
\begin{align*}
\mathbb{E}\left[\frac{\mathbb{E}\big[T_{1jk}T_{2jk_1}|X_1,X_2\big]}{\mathbb{E}\big[A_{13}|X_1\big]\mathbb{E}\big[A_{24}|X_2\big]}\right]=O\left(r_n^4\right).
\end{align*}
Therefore, we have
\begin{align} \label{lemmapepeq9}
\mathbb{E}\left[\frac{(n+1)^2}{(n-1)^4\mathbb{E}\left[A_{12}\mid X_1\right]\mathbb{E}\left[A_{12}\mid X_2\right]}\sum_{\substack{j\ne k\ne k_1\geq3 }}T_{1jk}T_{2jk_1}\right]=O(nr_n^4).
\end{align}

Suppose $j=k_1$, $k\neq j_1$, and $j,k,j_1,k_1\geq3$. Then
\begin{align}\nonumber
&\frac{1}{\mathbb{E}\left[A_{12}\mid X_1\right]\mathbb{E}\left[A_{12}\mid X_2\right]}\mathbb{E}[T_{1jk}T_{2j_1j}|X_1,X_2]\\ \nonumber
&= \frac{\mathbb{E}\big[(A_{1j}A_{jk}A_{2j_1}A_{j_1j}-A_{1j}A_{jk}A_{2j_1}A_{2j}-A_{1j}A_{1k}A_{2j_1}A_{j_1j}+A_{1j}A_{2j_1}A_{1k}A_{2j})|X_1,X_2\big]}{\mathbb{E}\left[A_{12}\mid X_1\right]\mathbb{E}\left[A_{12}\mid X_2\right]}\\ \nonumber
&=\frac{1}{\mathbb{E}\left[A_{12}\mid X_1\right]\mathbb{E}\left[A_{12}\mid X_2\right]}\Big((2r_n)\mathbb{E}\big[A_{1j}A_{2j_1}A_{j_1j}f(X_j)|X_1,X_2\big]\\ \nonumber
&\quad-(2r_n)\mathbb{E}\big[A_{1j}A_{2j_1}A_{2j}f(X_j)|X_1,X_2\big]-(2r_n)\mathbb{E}\big[A_{1j}A_{2j_1}A_{j_1j}f(X_1)|X_1,X_2\big]\\ \nonumber
&\quad+(2r_n)\mathbb{E}\big[A_{1j}A_{2j_1}A_{2j}f(X_1)|X_1,X_2\big]+O(r_n^3)\mathbb{E}\big[A_{1j}A_{2j_1}A_{j_1j}|X_1,X_2\big]\\ \nonumber
&\quad+O(r_n^3)\mathbb{E}\big[A_{1j}A_{2j_1}A_{j_1j}|X_1,X_2\big]+O(r_n^3)\mathbb{E}\big[A_{1j}A_{2j_1}A_{2j}|X_1,X_2\big]\\ \label{lemmapepeq10}
&\quad+O(r_n^3)\mathbb{E}\big[A_{1j}A_{2j_1}A_{2j}|X_1,X_2\big]\Big).
\end{align}
By (\ref{aera1}), the expectation of the first term of (\ref{lemmapepeq10}) is equal to
\begin{align}\nonumber
   &(2r_n)\mathbb{E}\left[ \frac{\mathbb{E}\big[A_{1j}A_{2j_1}A_{j_1j}f(X_j)|X_1,X_2\big]}{\mathbb{E}\left[A_{12}\mid X_1\right]\mathbb{E}\left[A_{12}\mid X_2\right]} \right]\\ \nonumber
   &=\frac{1}{2r_n}\mathbb{E}\left[ \frac{A_{1j}A_{2j_1}A_{j_1j}f(X_j)}{f(X_1)f(X_2)} \right]+O(r_n^3)\mathbb{E}\big[A_{1j}A_{2j_1}A_{j_1j}\big]\\ \nonumber
   &=\frac{1}{2r_n}\mathbb{E}\left[A_{j_1j}f(X_j) \mathbb{E}\left[ \frac{A_{1j}}{f(X_1)} \Big|X_j\right]\mathbb{E}\left[ \frac{A_{2j_1}}{f(X_2)} \Big|X_{j_1}\right]\right]+O(r_n^4)\\ \nonumber
   &=2r_n\mathbb{E}\left[A_{j_1j}f(X_j)\right]+O(r_n^4)\\ \label{00lemmapepeq10}
   &=(2r_n)^2\mathbb{E}\left[f^2(X_j)\right]+O(r_n^4).
\end{align}
By (\ref{aera1}), the expectation of the second term of (\ref{lemmapepeq10}) is equal to
\begin{align}\nonumber
   &(2r_n)\mathbb{E}\left[ \frac{\mathbb{E}\big[A_{1j}A_{2j_1}A_{2j}f(X_j)|X_1,X_2\big]}{\mathbb{E}\left[A_{12}\mid X_1\right]\mathbb{E}\left[A_{12}\mid X_2\right]} \right]\\ \nonumber
   &=\frac{1}{2r_n}\mathbb{E}\left[ \frac{A_{1j}A_{2j_1}A_{2j}f(X_j)}{f(X_1)f(X_2)} \right]+O(r_n^3)\mathbb{E}\big[A_{1j}A_{2j_1}A_{2j}\big]\\ \nonumber
   &=\frac{1}{2r_n}\mathbb{E}\left[A_{j2}f(X_j) \mathbb{E}\left[ \frac{A_{1j}}{f(X_1)} \Big|X_j\right]\mathbb{E}\left[ \frac{A_{2j_1}}{f(X_2)} \Big|X_{2}\right]\right]+O(r_n^4)\\ \nonumber
   &=2r_n\mathbb{E}\left[A_{j2}f(X_j)\right]+O(r_n^4)\\ \label{000lemmapepeq10}
   &=(2r_n)^2\mathbb{E}\left[f^2(X_j)\right]+O(r_n^4).
\end{align}
By (\ref{aera1}), the expectation of the third term of (\ref{lemmapepeq10}) is equal to
\begin{align}\nonumber
   &(2r_n)\mathbb{E}\left[ \frac{\mathbb{E}\big[A_{1j}A_{2j_1}A_{j_1j}f(X_1)|X_1,X_2\big]}{\mathbb{E}\left[A_{12}\mid X_1\right]\mathbb{E}\left[A_{12}\mid X_2\right]} \right]\\ \nonumber
   &=\frac{1}{2r_n}\mathbb{E}\left[ \frac{A_{1j}A_{2j_1}A_{j_1j}}{f(X_2)} \right]+O(r_n^3)\mathbb{E}\big[A_{1j}A_{2j_1}A_{j_1j}\big]\\ \nonumber
   &=\frac{1}{2r_n}\mathbb{E}\left[A_{jj_1} \mathbb{E}\left[ A_{1j} \big|X_j\right]\mathbb{E}\left[ \frac{A_{2j_1}}{f(X_2)} \Big|X_{j_1}\right]\right]+O(r_n^4)\\ \nonumber
   &=2r_n\mathbb{E}\left[A_{jj_1}f(X_j)\right]+O(r_n^4)\\ \label{40lemmapepeq10}
   &=(2r_n)^2\mathbb{E}\left[f^2(X_j)\right]+O(r_n^4).
\end{align}
The expectation of the fourth term of (\ref{lemmapepeq10}) is equal to
\begin{align}\nonumber
   &(2r_n)\mathbb{E}\left[ \frac{\mathbb{E}\big[A_{1j}A_{2j_1}A_{2j}f(X_1)|X_1,X_2\big]}{\mathbb{E}\left[A_{12}\mid X_1\right]\mathbb{E}\left[A_{12}\mid X_2\right]} \right]\\ \nonumber
   &=\frac{1}{2r_n}\mathbb{E}\left[ \frac{A_{1j}A_{2j_1}A_{2j}}{f(X_2)} \right]+O(r_n^3)\mathbb{E}\big[A_{1j}A_{2j_1}A_{2j}\big]\\ \nonumber
   &=\frac{1}{2r_n}\mathbb{E}\left[\frac{A_{2j}}{f(X_2)} \mathbb{E}\left[ A_{1j} \big|X_j\right]\mathbb{E}\left[ A_{2j_1} \big|X_{2}\right]\right]+O(r_n^4)\\ \nonumber
   &=2r_n\mathbb{E}\left[A_{2j}f(X_j)\right]+O(r_n^4)\\ \label{50lemmapepeq10}
   &=(2r_n)^2\mathbb{E}\left[f^2(X_j)\right]+O(r_n^4).
\end{align}
Note that $\mathbb{E}\big[A_{13}|X_1\big]=\Theta(r_n)$. By Lemma \ref{propmain}, it is easy to get
\begin{align}\label{lemmapepeq11}
\mathbb{E}\left[\frac{\mathbb{E}\big[A_{1j}A_{2j_1}A_{2j}|X_1,X_2\big]}{\mathbb{E}\big[A_{13}|X_1\big]\mathbb{E}\big[A_{24}|X_2\big]}\right]=O\left(\frac{1}{r_n^2}\right)\mathbb{E}\big[A_{1j}A_{2j_1}A_{2j}\big]=O(r_n),
\end{align}
and
\begin{align}\label{lemmapepeq12}
\mathbb{E}\left[\frac{\mathbb{E}\big[A_{1j}A_{2j_1}A_{j_1j}|X_1,X_2\big]}{\mathbb{E}\big[A_{13}|X_1\big]\mathbb{E}\big[A_{24}|X_2\big]}\right]=O\left(\frac{1}{r_n^2}\right)\mathbb{E}\big[A_{1j}A_{2j_1}A_{j_1j}\big]=O(r_n).
\end{align}

Combining (\ref{lemmapepeq10})-(\ref{lemmapepeq12}) yields
\begin{align}\label{lemmapepeq13}
\mathbb{E}\left[\frac{(n+1)^2}{(n-1)^4\mathbb{E}\left[A_{12}\mid X_1\right]\mathbb{E}\left[A_{12}\mid X_2\right]}\sum_{\substack{j\ne k\ne j_1\geq3 }}T_{1jk}T_{2j_1j}\right]=O(nr_n^4).
\end{align}

By (\ref{lemmapepeq2}), (\ref{lemmapepeq3}), (\ref{lemmapepeq9}) and (\ref{lemmapepeq13}), we have
\begin{align}\nonumber
&\mathbb{E}\left[\frac{(n+1)^2\mathbb{E}\left[A_{12}\mid X_1\right]\mathbb{E}\left[A_{12}\mid X_2\right]\sum_{\substack{j\ne k\geq3, j_1\ne k_1\geq 3}}T_{1jk}T_{2j_1k_1}}{\mu_1^2\mu_2^2}\right]\\ \label{lemmapepeq14}
&=\left(n\mathbb{E}\left[\frac{\mathbb{E}\left[(A_{13}A_{34}
    -A_{13}A_{14})|X_1\right]}{\mathbb{E}\left[A_{12}\mid X_1\right]}\right]\right)^2+O(nr_n^4+r_n).
\end{align}

Suppose $j=2$ and $k\neq j_1\neq k_1\geq3$. Then
\begin{align*}
&\mathbb{E}[T_{12k}T_{2j_1k_1}|X_1,X_2]\\
&= \mathbb{E}\big[(A_{12}A_{2k}A_{2j_1}A_{j_1k_1}-A_{12}A_{2k}A_{2j_1}A_{2k_1}-A_{12}A_{1k}A_{2j_1}A_{j_1k_1}+A_{12}A_{2j_1}A_{1k}A_{2k_1})|X_1,X_2\big]\\
&= A_{12}\mathbb{E}\big[A_{2k}|X_1,X_2\big]\mathbb{E}\big[A_{2j_1}A_{j_1k_1}|X_1,X_2\big]-A_{12}(\mathbb{E}\big[A_{2k}|X_1,X_2\big])^3\\
&\quad-A_{12}\mathbb{E}\big[A_{1k}|X_1,X_2\big]\mathbb{E}\big[A_{2j_1}A_{j_1k_1}\big]+A_{12}\mathbb{E}\big[A_{1k}|X_1,X_2\big](\mathbb{E}\big[A_{2k}|X_1,X_2\big])^2\\
&=(2r_n)^3\mathbb{E}\big[A_{12}f^3(X_2)|X_1,X_2\big]+O(r_n^5)\mathbb{E}\big[A_{12}|X_1,X_2\big]\\
&\quad-(2r_n)^3\mathbb{E}\big[A_{12}f^3(X_2)|X_1,X_2\big]+O(r_n^5)\mathbb{E}\big[A_{12}|X_1,X_2\big]\\
&\quad-(2r_n)^3\mathbb{E}\big[A_{12}f^2(X_{2})f(X_1)|X_1,X_2\big]+O(r_n^5)\mathbb{E}\big[A_{12}|X_1,X_2\big]\\
&\quad+(2r_n)^3\mathbb{E}\big[A_{12}f^2(X_{2})f(X_1)|X_1,X_2\big]+O(r_n^5)\mathbb{E}\big[A_{12}|X_1,X_2\big]
\end{align*}
Hence, we get
\begin{align*} 
\mathbb{E}\left[\frac{\mathbb{E}\big[T_{12k}T_{2j_1k_1}|X_1,X_2\big]}{\mathbb{E}\big[A_{13}|X_1\big]\mathbb{E}\big[A_{24}|X_2\big]}\right]&=O(r_n^5)\mathbb{E}\left[\frac{A_{12}}{(2r_n)^2f(X_1)f(X_2)}\right]=O(r_n^4).
\end{align*}
Therefore, we have
\begin{align}\label{lemmapepeq15}
\mathbb{E}\left[\frac{(n+1)^2\mathbb{E}\left[A_{12}\mid X_1\right]\mathbb{E}\left[A_{12}\mid X_2\right]\sum_{\substack{k\neq j_1\ne k_1\geq 3}}T_{12k}T_{2j_1k_1}}{\mu_1^2\mu_2^2}\right]=O(nr_n^4).
\end{align}

Suppose $j=2$ and $k=j_1\neq k_1\geq3$. Then
\begin{align*}
&\mathbb{E}[|T_{12k}T_{2kk_1}|]\\
&\leq \mathbb{E}\Big[(A_{12}A_{2k}A_{kk_1}+A_{12}A_{2k}A_{2k_1}+A_{12}A_{1k}A_{2k}A_{kk_1}+A_{12}A_{2k}A_{1k}A_{2k_1})\Big]=O(r_n^3).
\end{align*}
Therefore, we have
\begin{align}\nonumber
&\mathbb{E}\left[\left|\frac{(n+1)^2\mathbb{E}\left[A_{12}\mid X_1\right]\mathbb{E}\left[A_{12}\mid X_2\right]\sum_{\substack{k\ne k_1\geq 3}}T_{12k}T_{2kk_1}}{\mu_1^2\mu_2^2}\right|\right]\\ \nonumber
&=O\left(\frac{(n+1)^2(n-3)(n-4)}{(n-1)^4r_n^2}\right)\mathbb{E}[|T_{12k}T_{2kk_1}|]\\ \label{lemmapepeq16}
&=O(r_n).
\end{align}

Suppose $j=2$ and $k=k_1\neq j_1\geq3$. Then
\begin{align*}
&\mathbb{E}[|T_{12k}T_{2j_1k}|]\\
&\leq \mathbb{E}\Big[(A_{12}A_{2k}A_{2j_1}A_{j_1k}+A_{12}A_{2k}A_{2j_1}+A_{12}A_{1k}A_{2j_1}A_{j_1k}+A_{12}A_{2k}A_{1k}A_{2j_1})\Big]=O(r_n^3).
\end{align*}
Therefore, we have
\begin{align}\nonumber
&\mathbb{E}\left[\left|\frac{(n+1)^2\mathbb{E}\left[A_{12}\mid X_1\right]\mathbb{E}\left[A_{12}\mid X_2\right]\sum_{\substack{k\ne j_1\geq 3}}T_{12k}T_{2j_1k}}{\mu_1^2\mu_2^2}\right|\right]\\ \nonumber
&=O\left(\frac{(n+1)^2(n-3)(n-4)}{(n-1)^4r_n^2}\right)\mathbb{E}[|T_{12k}T_{2j_1k}|]\\ \label{lemmapepeq17}
&=O(r_n).
\end{align}

Suppose $j=2$ and $j_1=1$. In this case,  $k,k_1\geq3$ and  $T_{12k}T_{21k_1}$ can be expressed as 
\[T_{12k}T_{21k_1}= A_{12}(A_{1k_1}A_{2k}- A_{2k}A_{2k_1}-A_{1k}A_{1k_1}+A_{1k}A_{2k_1}).\]
If $k_1\neq k$, then
\begin{align*}
\mathbb{E}[T_{12k}T_{21k_1}]=O(r_n^3).
\end{align*}
If $k_1=k$, then
\begin{align*}
\mathbb{E}[T_{12k}T_{21k}]=O(r_n^2).
\end{align*}
Therefore, we have
\begin{align}\nonumber
&\mathbb{E}\left[\left|\frac{(n+1)^2\mathbb{E}\left[A_{12}\mid X_1\right]\mathbb{E}\left[A_{12}\mid X_2\right]\sum_{\substack{k, k_1\geq 3}}T_{12k}T_{21k_1}}{\mu_1^2\mu_2^2}\right|\right]\\ \nonumber
&=O\left(\frac{(n+1)^2(n-3)(n-4)r_n^3}{(n-1)^4r_n^2}\right)+O\left(\frac{(n+1)^2(n-3)r_n^2}{(n-1)^4r_n^2}\right) \\  \label{lemmapepeq18}
&=O\left(r_n+\frac{1}{n}\right).
\end{align}

Suppose $j=2$ and $k_1=1$.  In this case, $j_1, k\geq3$, and $T_{12k}T_{2j_11}$ can be expressed as
\begin{align*}
T_{12k}T_{2j_11}= A_{12}A_{1j_1}A_{2k}A_{2j_1}- A_{12}A_{2k}A_{2j_1}-A_{12}A_{1k}A_{1j_1}A_{2j_1}+A_{12}A_{1k}A_{2j_1}.
\end{align*}
If $j_1\neq k$, then
\begin{align*}
\mathbb{E}[|T_{12k}T_{2j_11}|]=O(r_n^3).
\end{align*}
If $j_1=k$, then
\begin{align*}
\mathbb{E}[|T_{12k}T_{2j_11}|]=O(r_n^2).
\end{align*}
Therefore, we have
\begin{align}\label{lemmapepeq19}
\mathbb{E}\left[\left|\frac{(n+1)^2\mathbb{E}\left[A_{12}\mid X_1\right]\mathbb{E}\left[A_{12}\mid X_2\right]\sum_{\substack{k, j_1\geq 3}}T_{12k}T_{2j_11}}{\mu_1^2\mu_2^2}\right|\right]=O\left(r_n+\frac{1}{n}\right).
\end{align}

Suppose $k=2$ and $j\neq j_1\neq k_1\geq3$. In this case, the conditional expectation of \(T_{1j2}T_{2j_{1}k_{1}}\) given $X_1,X_2$ is given by
\begin{align*}
&\mathbb{E}[T_{1j2}T_{2j_1k_1}|X_1,X_2]\\
&= \mathbb{E}\big[(A_{1j}A_{j2}A_{2j_1}A_{j_1k_1}-A_{1j}A_{j2}A_{2j_1}A_{2k_1}-A_{1j}A_{12}A_{2j_1}A_{j_1k_1}+A_{1j}A_{2j_1}A_{12}A_{2k_1})|X_1,X_2\big]\\
&=(2r_n)^2\mathbb{E}\big[A_{1j}A_{j2}f^2(X_2)|X_1,X_2\big]+O(r_n^4)\mathbb{E}\big[A_{1j}A_{j2}|X_1,X_2\big]\\
&\quad-(2r_n)^2\mathbb{E}\big[A_{1j}A_{j2}f^2(X_2)|X_1,X_2\big]+O(r_n^4)\mathbb{E}\big[A_{1j}A_{j2}|X_1,X_2\big]\\
&\quad-(2r_n)^2\mathbb{E}\big[A_{1j}A_{12}f^2(X_2)|X_1,X_2\big]+O(r_n^4)\mathbb{E}\big[A_{1j}A_{12}|X_1,X_2\big]\\
&\quad+(2r_n)^2\mathbb{E}\big[A_{1j}A_{12}f^2(X_2)|X_1,X_2\big]+O(r_n^4)\mathbb{E}\big[A_{1j}A_{12}|X_1,X_2\big]\\
&=O(r_n^4)\mathbb{E}\big[A_{1j}A_{j2}|X_1,X_2\big]+O(r_n^4)\mathbb{E}\big[A_{1j}A_{12}|X_1,X_2\big].
\end{align*}
Therefore, we have
\begin{align}\nonumber
&\mathbb{E}\left[\frac{(n+1)^2\mathbb{E}\left[A_{12}\mid X_1\right]\mathbb{E}\left[A_{12}\mid X_2\right]\sum_{\substack{j\neq j_1\ne k_1\geq 3}}T_{1j2}T_{2j_1k_1}}{\mu_1^2\mu_2^2}\right]\\ \nonumber
&=\frac{(n+1)^2(n-3)(n-4)(n-5)}{(n-1)^4}\mathbb{E}\left[\frac{\mathbb{E}\big[T_{1j2}T_{2j_1k_1}|X_1,X_2\big]}{\mathbb{E}\big[A_{13}|X_1\big]\mathbb{E}\big[A_{24}|X_2\big]}\right]\\ \nonumber
&=O(nr_n^4)\mathbb{E}\left[\frac{\mathbb{E}\big[A_{1j}A_{j2}|X_1,X_2\big]+\mathbb{E}\big[A_{1j}A_{12}|X_1,X_2\big]}{\mathbb{E}\big[A_{13}|X_1\big]\mathbb{E}\big[A_{24}|X_2\big]}\right]\\ \label{lemmapepeq20}
&=O(nr_n^4),
\end{align}
where the last equality follows by an argument similar to that used in (\ref{50lemmapepeq10}).

Suppose $k=2$ and $j=j_1\neq k_1\geq3$. Then
\begin{align*}
&\mathbb{E}[|T_{1j2}T_{2jk_1}|]\\
&\leq \mathbb{E}\big[A_{1j}A_{j2}A_{jk_1}+A_{1j}A_{j2}A_{2k_1}+A_{1j}A_{12}A_{2j}A_{jk_1}+A_{1j}A_{2j}A_{12}A_{2k_1}\big]=O(r_n^3).
\end{align*}
Therefore, we have
\begin{align}\label{lemmapepeq21}
\mathbb{E}\left[\left|\frac{(n+1)^2\mathbb{E}\left[A_{12}\mid X_1\right]\mathbb{E}\left[A_{12}\mid X_2\right]\sum_{\substack{j_1\ne k_1\geq 3}}T_{1j2}T_{2jk_1}}{\mu_1^2\mu_2^2}\right|\right]=O(r_n).
\end{align}

Suppose $k=2$ and $j=k_1\neq j_1\geq3$. Then
\begin{align*}
&\mathbb{E}[|T_{1j2}T_{2j_1j}|]\\
&\leq \mathbb{E}\big[A_{1j}A_{j2}A_{2j_1}A_{jj_1}+A_{1j}A_{j2}A_{2j_1}+A_{1j}A_{12}A_{2j}A_{jj_1}+A_{1j}A_{2j}A_{12}A_{2j_1})\big]\\
&=O(r_n^3).
\end{align*}
Therefore, we have
\begin{align}\label{lemmapepeq22}
\mathbb{E}\left[\left|\frac{(n+1)^2\mathbb{E}\left[A_{12}\mid X_1\right]\mathbb{E}\left[A_{12}\mid X_2\right]\sum_{\substack{j\ne j_1\geq 3}}T_{1j2}T_{2j_1j}}{\mu_1^2\mu_2^2}\right|\right]=O(r_n).
\end{align}

Suppose $k=2$ and $j_1=1$. In this case, $k_1, j\geq3$. Then
\begin{align*}
T_{1j2}T_{21k_1}= A_{1j}A_{j2}A_{21}A_{1k_1}- A_{1j}A_{j2}A_{21}A_{2k_1}-A_{12}A_{1j}A_{1k_1}+A_{1j}A_{12}A_{2k_1}
\end{align*}
If $j_1\neq k$, then
\begin{align*}
\mathbb{E}[|T_{1j2}T_{21k_1}|]=O(r_n^3).
\end{align*}
If $j_1=k$, then
\begin{align*}
\mathbb{E}[|T_{1j2}T_{21k_1}|]=O(r_n^2).
\end{align*}
Therefore, we have
\begin{align}\label{lemmapepeq23}
\mathbb{E}\left[\left|\frac{(n+1)^2\mathbb{E}\left[A_{12}\mid X_1\right]\mathbb{E}\left[A_{12}\mid X_2\right]\sum_{\substack{j, k_1\geq 3}}T_{1j2}T_{21k_1}}{\mu_1^2\mu_2^2}\right|\right]=O\left(r_n+\frac{1}{n}\right).
\end{align}

Suppose $k=2$ and $k_1=1$. In this case,  $j_1, j\geq3$. Then
\begin{align*}
T_{1j2}T_{2j_11}= A_{1j}A_{j2}A_{2j_1}A_{j_11}- A_{1j}A_{j2}A_{21}A_{2j_1}-A_{12}A_{1j}A_{2j_1}A_{j_11}+A_{1j}A_{12}A_{2j_1}.
\end{align*}
If $j_1\neq j$, then
\begin{align*}
\mathbb{E}[|T_{1j2}T_{2j_11}|]=O(r_n^3).
\end{align*}
If $j_1=j$, then
\begin{align*}
\mathbb{E}[|T_{1j2}T_{2j_11}|]=O(r_n^2).
\end{align*}
Therefore, we have
\begin{align}\label{lemmapepeq24}
\mathbb{E}\left[\left|\frac{(n+1)^2\mathbb{E}\left[A_{12}\mid X_1\right]\mathbb{E}\left[A_{12}\mid X_2\right]\sum_{\substack{j, j_1\geq 3}}T_{1j2}T_{2j_11}}{\mu_1^2\mu_2^2}\right|\right]=O\left(r_n+\frac{1}{n}\right).
\end{align}

By (\ref{lemmapepeq15})-(\ref{lemmapepeq24}), we have
\begin{align}\label{00lemmapepeq14}
\mathbb{E}\left[\frac{(n+1)^2\mathbb{E}\left[A_{12}\mid X_1\right]\mathbb{E}\left[A_{12}\mid X_2\right]\sum_{\substack{j\ne k\geq3, j_1\ne k_1\geq 3\\ j=2, or\ k=2}}T_{1jk}T_{2j_1k_1}}{\mu_1^2\mu_2^2}\right]  
=O\left(r_n+\frac{1}{n}+nr_n^4\right).
\end{align}

Combining (\ref{lemmapepeq14}) and (\ref{00lemmapepeq14}) yields (\ref{franeq3}).  Equations (\ref{franeq1}) and (\ref{franeq2}) follow easily from the proof of (\ref{franeq3}).

\qed

\subsection{Proof of Lemma \ref{lemqq} }
Note that
\begin{align}\label{aftereq1}
\mathbb{E}\left[\frac{Q_1Q_2}{\mu_1^2\mu_2^2}\right]=\sum_{\substack{j\ne k \ne 1\\ j_1\ne k_1\neq 2}}\mathbb{E}\left[\frac{S_{1jk}S_{2j_1k_1}}{\mu_1^2\mu_2^2}\right].
\end{align} 
    
If $j\ne k \ne j_1\ne k_1\geq3$, then  $S_{1jk}$ and $S_{2j_1k_1}$ are conditionally independent given $X_1,X_2$. Hence, we have
\begin{align*}
\mathbb{E}\left[\frac{S_{1jk}S_{2j_1k_1}}{\mu_1^2\mu_2^2}\right]
&=\mathbb{E}\left[\frac{\mathbb{E}\left[S_{1jk}S_{2j_1k_1}|X_1,X_2\right]}{\mu_1^2\mu_2^2}\right]\\
&=\mathbb{E}\left[\frac{\mathbb{E}\left[S_{1jk}|X_1\right]\mathbb{E}\left[S_{2j_1k_1}|X_2\right]}{\mu_1^2\mu_2^2}\right]\\
&=\mathbb{E}\left[\frac{\mathbb{E}\left[S_{1jk}|X_1\right]}{\mu_1^2}\right]\mathbb{E}\left[\frac{\mathbb{E}\left[S_{2j_1k_1}|X_2\right]}{\mu_2^2}\right]\\
&=\frac{1}{(n-1)^4}\left(\mathbb{E}\left[\frac{\mathbb{E}\left[\big(A_{12}A_{13}
    -A_{12}A_{13}A_{23}\big)|X_1\right]}{\big(\mathbb{E}[A_{12}|X_1]\big)^2}\right]\right)^2.
\end{align*}
It then follows that
\begin{align}\label{ffeqqeq7}
\sum_{\substack{j\ne k \ne j_1\ne k_1\geq3}}\mathbb{E}\left[\frac{S_{1jk}S_{2j_1k_1}}{\mu_1^2\mu_2^2}\right]=\left(\mathbb{E}\left[\frac{\mathbb{E}\left[\big(A_{12}A_{13}
    -A_{12}A_{13}A_{23}\big)|X_1\right]}{\big(\mathbb{E}[A_{12}|X_1]\big)^2}\right]\right)^2+O\left(\frac{1}{n}\right).
\end{align}

Suppose $\{j, k\}=\{j_1, k_1\}$. In this case, $j,k,j_1,k_1\geq3$. There are at most $n^2$ such index pairs. Note that $0\leq S_{1jk}=A_{1j}A_{1k}(1-A_{jk})\leq 2A_{1j}A_{1k}$ and $|S_{2j_1k_1}|=|A_{2j_1}A_{2k_1}(1-A_{j_1k_1})|\leq 2$. Then $|S_{1jk}S_{2j_1k_1}|\leq 4 A_{1j}A_{1k}$. In this case, 
\begin{align}\label{eqqeq1}
\mathbb{E}\left[|S_{1jk}S_{2j_1k_1}|\right]\leq 4 \mathbb{E}\left[A_{1j}A_{1k}\right]=4 \mathbb{E}\left[\mathbb{E}\left[A_{1j}A_{1k}|X_1\right]\right]=O(r_n^2).
\end{align}
Then
\begin{align}\label{eqqeq2}
\mathbb{E}\left[\left|\frac{\sum_{\substack{j\ne k\geq 3, j_1\ne k_1\geq3\\
\{j, k\}=\{j_1, k_1\}}}S_{1jk}S_{2j_1k_1}}{\mu_1^2\mu_2^2}\right|\right]=O\left(\frac{n^2r_n^2}{n^4r_n^4}\right) =O\left(\frac{1}{n^2r_n^2}\right) .
\end{align}

Suppose $|\{j, k\}\cap\{j_1, k_1\}|=1$ and $j,k,j_1,k_1\geq3$. There are at most $n^3$ such index tuples $\{j,k,j_1,k_1\}$. In this case, either $j_1\not\in\{j, k\}$ or $k_1\not\in\{j, k\}$. Without loss of generality, let $j_1\not\in\{j, k\}$. Then $|S_{1jk}S_{2j_1k_1}|\leq 4 A_{1j}A_{1k}A_{2j_1}$. In this case,  
\[\mathbb{E}\left[|S_{1jk}S_{2j_1k_1}|\right]\leq 4 \mathbb{E}\left[A_{1j}A_{1k}A_{2j_1}\right]=4 \mathbb{E}\left[\mathbb{E}\left[A_{1j}A_{1k}A_{2j_1}|X_1\right]\right]=O(r_n^3).\]
Then
\begin{align}\label{ffeqqeq3}
\mathbb{E}\left[\left|\frac{\sum_{\substack{j\ne k\geq 3, j_1\ne k_1\geq3\\
|\{j, k\}\cap\{j_1, k_1\}|=1}}S_{1jk}S_{2j_1k_1}}{\mu_1^2\mu_2^2}\right|\right]=O\left(\frac{n^3r_n^3}{n^4r_n^4}\right) =O\left(\frac{1}{nr_n}\right).
\end{align}

Suppose $j=2$ and $j_1=1$. Then $k\geq3$ and $k_1\geq3$. If $k\neq k_1$, there are at most $n^2$ such index tuples $\{j,k,j_1,k_1\}$. In this case,  we have
\begin{equation}\label{aftereqq1}
\mathbb{E}\left[|S_{12k}S_{21k_1}|\right]\leq 4 \mathbb{E}\left[A_{2k}A_{1k_1}\right]=O(r_n^2).
\end{equation}
If $k=k_1$, there are at most $n$ such index tuples $\{j,k,j_1,k_1\}$. In this case,  we have
\begin{equation}\label{aftereqq2}
\mathbb{E}\left[|S_{12k}S_{21k_1}|\right]\leq 4 \mathbb{E}\left[A_{2k}\right]=O(r_n).
\end{equation}
Similarly, we can get (\ref{aftereqq1}) and (\ref{aftereqq2}) for the case $\{j,k\}\cap\{2\}\neq\emptyset$ and $\{j_1,k_1\}\cap\{1\}\neq\emptyset$. Therefore, we have
\begin{align}\label{ffeqqeq4}
\mathbb{E}\left[\left|\frac{\sum_{\substack{j\ne k\neq 1, j_1\ne k_1\neq2\\
\{j,k\}\cap\{2\}\neq\emptyset,\{j_1,k_1\}\cap\{1\}\neq\emptyset}}S_{1jk}S_{2j_1k_1}}{\mu_1^2\mu_2^2}\right|\right]=O\left(\frac{n^2r_n^2+nr_n}{n^4r_n^4}\right)  
=O\left(\frac{1}{n^2r_n^2}\right).
\end{align}

Suppose $j=2$. If $k\neq j_1\neq k_1\geq3$, there are at most $n^3$ such index tuples $\{j,k,j_1,k_1\}$. Then 
\[\mathbb{E}\left[|S_{1jk}S_{2j_1k_1}|\right]\leq 4 \mathbb{E}\left[A_{1k}A_{2j_1}A_{2k_1}\right]=4 \mathbb{E}\left[\mathbb{E}\left[A_{1k}A_{2j_1}A_{2k_1}|X_1,X_2\right]\right]=O(r_n^3),\]
which implies that
\begin{align}  \label{ffeqqeq5}
\mathbb{E}\left[\left|\frac{\sum_{\substack{k\neq j_1\ne k_1\neq2}}S_{12k}S_{2j_1k_1}}{\mu_1^2\mu_2^2}\right|\right]=O\left(\frac{n^3r_n^3}{n^4r_n^4}\right)=O\left(\frac{1}{nr_n}\right).
\end{align}
If $k\in\{j_1,k_1\}$, then 
\[\mathbb{E}\left[|S_{1jk}S_{2j_1k_1}|\right]\leq 4 \mathbb{E}\left[A_{2j_1}A_{2k_1}\right]=4 \mathbb{E}\left[\mathbb{E}\left[A_{2j_1}A_{2k_1}|X_1,X_2\right]\right]=O(r_n^2).\]
It follows that
\begin{align}\label{ffeqqeq6}
\mathbb{E}\left[\left|\frac{\sum_{\substack{j_1\ne k_1\neq2\\ k\in\{j_1,k_1\}}}S_{12k}S_{2j_1k_1}}{\mu_1^2\mu_2^2}\right|\right]&=O\left(\frac{n^2r_n^2}{n^3r_n^3}\right) 
&=O\left(\frac{1}{n^2r_n^2}\right).
\end{align}

The case \(k=2\) is analogous to the case \(j=2\).  In summary, combining (\ref{ffeqqeq7}) through (\ref{ffeqqeq6}) establishes Lemma \ref{lemqq}.

\qed

\subsection{Proof of Lemma \ref{lempq} }

Note that
\begin{align}\label{214eq0}
\frac{(n+1)\mathbb{E}\left[A_{12}\mid X_1\right]}{\mu_1^2\mu_2^2}P_1Q_2=\frac{(n+1)\mathbb{E}\left[A_{12}\mid X_1\right]}{\mu_1^2\mu_2^2}\sum_{\substack{j\ne k \ne 1\\ j_1\ne k_1\neq 2}}T_{1jk}S_{2j_1k_1}.
\end{align}

Suppose $j\neq k\neq j_1\neq k_1\geq3$. Since \(T_{1jk}\) and \(S_{2j_{1}k_{1}}\) are conditionally independent given \(X_{1},X_{2}\), the conditional expectation factorizes as
\begin{align*}  
\mathbb{E}[T_{1jk}S_{2j_1k_1}|X_1,X_2]&=\mathbb{E}[T_{1jk}|X_1]\mathbb{E}[S_{2j_1k_1}|X_2]\\
&=\mathbb{E}[(A_{13}A_{34}-A_{13}A_{14})|X_1]\mathbb{E}[(A_{25}A_{26}-A_{25}A_{26}A_{56})|X_2].
\end{align*}
This implies that
\begin{align}  \nonumber
&\sum_{j\neq k\neq j_1\neq k_1\geq3}\mathbb{E}\left[\frac{(n+1)\mathbb{E}\left[A_{12}\mid X_1\right]\mathbb{E}[T_{1jk}S_{2j_1k_1}|X_1,X_2]}{\mu_1^2\mu_2^2}\right]\\ \nonumber
&=\frac{(n^5+O(n^4))}{(n-1)^4}\mathbb{E}\left[\frac{\mathbb{E}[(A_{13}A_{34}-A_{13}A_{14})|X_1]\mathbb{E}[(A_{25}A_{26}-A_{25}A_{26}A_{56})|X_2]}{\mathbb{E}\left[A_{12}\mid X_1\right](\mathbb{E}\left[A_{12}\mid X_2\right])^2}\right]\\ \label{214eq1}
&=n\mathbb{E}\left[\frac{\mathbb{E}[(A_{13}A_{34}-A_{13}A_{14})|X_1]}{\mathbb{E}\left[A_{12}\mid X_1\right]}\right]\mathbb{E}\left[\frac{\mathbb{E}[(A_{25}A_{26}-A_{25}A_{26}A_{56})|X_2]}{(\mathbb{E}\left[A_{12}\mid X_2\right])^2}\right]+O\left(r_n^3\right).
\end{align}

Suppose $j=j_1\neq k\neq k_1\geq3$. Then
\begin{align*}  
&\mathbb{E}[|T_{1jk}S_{2jk_1}| ]\\
&\leq\mathbb{E}[ A_{1j}A_{jk}A_{2j}A_{2k_1}+ A_{1j}A_{jk}A_{2j}A_{2k_1}A_{jk_1}+A_{1j}A_{1k}A_{2j}A_{2k_1}+A_{1j}A_{1k}A_{2j}A_{2k_1}A_{jk_1}] \\
&=O(r_n^4).
\end{align*}
This implies that
\begin{align}  \label{214eq2}
\sum_{j\neq k\neq k_1\geq3}\mathbb{E}\left[\frac{(n+1)\mathbb{E}\left[A_{12}\mid X_1\right]\mathbb{E}[T_{1jk}S_{2jk_1}|X_1,X_2]}{\mu_1^2\mu_2^2}\right]=O\left(\frac{n^4r_n^5}{n^4r_n^4}\right)=O(r_n).
\end{align}

Suppose $j=k_1\neq k\neq j_1\geq3$. Then
\begin{align*}  
&\mathbb{E}[|T_{1jk}S_{2j_1j}| ]\\
&\leq\mathbb{E}[ A_{1j}A_{jk}A_{2j_1}A_{2j}+ A_{1j}A_{jk}A_{2j_1}A_{2j}A_{j_1j}+A_{1j}A_{1k}A_{2j_1}A_{2j}+A_{1j}A_{1k}A_{2j_1}A_{2j}A_{j_1j}] \\
&=O(r_n^4).
\end{align*}
The sum over these indices in (\ref{214eq0}) is bounded by $O\left(r_n\right)$.

Suppose $k=j_1\neq j\neq k_1\geq3$. Then
\begin{align*}  
&\mathbb{E}[|T_{1jk}S_{2kk_1}| ]\\
&\leq\mathbb{E}[ A_{1j}A_{jk}A_{2k}A_{2k_1}+ A_{1j}A_{jk}A_{2k}A_{2k_1}A_{kk_1}+A_{1j}A_{1k}A_{2k}A_{2k_1}+A_{1j}A_{1k}A_{2k}A_{2k_1}A_{kk_1}] \\
&=O(r_n^4).
\end{align*}
The sum over these indices in (\ref{214eq0}) is bounded by $O\left(r_n\right)$.

Suppose $k=k_1\neq j\neq j_1\geq3$. Then
\begin{align*}  
&\mathbb{E}[|T_{1jk}S_{2j_1k}| ]\\
&\leq\mathbb{E}[ A_{1j}A_{jk}A_{2j_1}A_{2k}+ A_{1j}A_{jk}A_{2j_1}A_{2k}A_{j_1k}+A_{1j}A_{1k}A_{2j_1}A_{2k}+A_{1j}A_{1k}A_{2j_1}A_{2k}A_{j_1k}] \\
&=O(r_n^4).
\end{align*}
The sum over these indices in (\ref{214eq0}) is bounded by $O\left(r_n\right)$.

Suppose $\{j,k\}=\{j_1,k_1\}$ and $j,k,j_1,k_1\geq3$. Then
\begin{align*}  
\mathbb{E}[|T_{1jk}S_{2jk}| ]\leq2\mathbb{E}[ A_{1j}A_{jk}+ A_{1j}A_{1k}]=O(r_n^2).
\end{align*}
The sum over these indices in (\ref{214eq0}) is bounded by $O\left(\frac{1}{nr_n}\right)$.

Therefore,  we have
\begin{align}  \nonumber
&\mathbb{E}\left[\frac{(n+1)\mathbb{E}\left[A_{12}\mid X_1\right]\sum_{j\neq k\geq3, j_1\neq k_1\geq3}T_{1jk}S_{2j_1k_1}}{\mu_1^2\mu_2^2}\right]\\ \label{214eq3}
&=n\mathbb{E}\left[\frac{\mathbb{E}[(A_{13}A_{34}-A_{13}A_{14})|X_1]}{\mathbb{E}\left[A_{12}\mid X_1\right]}\right]\mathbb{E}\left[\frac{\mathbb{E}[(A_{25}A_{26}-A_{25}A_{26}A_{56})|X_2]}{(\mathbb{E}\left[A_{12}\mid X_2\right])^2}\right]+O\left(r_n+\frac{1}{nr_n}\right).
\end{align}

Suppose $j=2$, $k\neq j_1\neq k_1\geq3$. Then
\begin{align*}  
&\mathbb{E}[|T_{12k}S_{2j_1k_1}| ]\\
&\leq\mathbb{E}[ A_{12}A_{2k}A_{2j_1}A_{2k_1}+ A_{12}A_{2k}A_{2j_1}A_{2k_1}A_{j_1k_1}+A_{12}A_{1k}A_{2j_1}A_{2k_1}+A_{12}A_{1k}A_{2j_1}A_{2k_1}A_{j_1k_1}] \\
&=O(r_n^4).
\end{align*}
The sum over these indices in (\ref{214eq0}) is bounded by $O\left(r_n\right)$.

Suppose $j=2$, $k=j_1\neq k_1\geq3$ or $k=k_1\neq j_1\geq3$. Then
\begin{align*}  
\mathbb{E}[|T_{12k}S_{2j_1k_1}| ]\leq2\mathbb{E}[ A_{12}A_{2k}+ A_{12}A_{1k}]=O(r_n^2).
\end{align*}
The sum over these indices in (\ref{214eq0}) is bounded by $O\left(\frac{1}{nr_n}\right)$.

Suppose $j=2$ and $j_1=1$. If $k\neq k_1$, then
\begin{align*}  
\mathbb{E}[|T_{12k}S_{2j_1k_1}| ]\leq2\mathbb{E}[ A_{12}A_{2k}+ A_{12}A_{1k}]=O(r_n^2).
\end{align*}
If $k=k_1$, then
\begin{align*}  
\mathbb{E}[|T_{12k}S_{21k}| ]\leq2\mathbb{E}[ A_{2k}+A_{1k}]=O(r_n).
\end{align*}
The sum over these indices in (\ref{214eq0}) is bounded by $O\left(\frac{1}{nr_n}\right)$. The same bound applies to the case $j=2$ and $k_1=1$.

Suppose $k=2$, $j\neq j_1\neq k_1\geq3$. Then
\begin{align*}  
&\mathbb{E}[|T_{1j2}S_{2j_1k_1}| ]\\
&\leq\mathbb{E}[ A_{1j}A_{j2}A_{2j_1}A_{2k_1}+ A_{1j}A_{j2}A_{2j_1}A_{2k_1}A_{j_1k_1}+A_{1j}A_{12}A_{2j_1}A_{2k_1}+A_{1j}A_{j2}A_{2j_1}A_{2k_1}A_{j_1k_1}] \\
&=O(r_n^4).
\end{align*}
The sum over these indices in (\ref{214eq0}) is bounded by $O\left(r_n\right)$.

Suppose $k=2$, $j=j_1\neq k_1\geq3$ or $j=k_1\neq j_1\geq3$. Then
\begin{align*}  
\mathbb{E}[|T_{1j2}S_{2j_1k_1}| ]\leq2\mathbb{E}[ A_{1j}A_{j2}+ A_{1j}A_{12}]=O(r_n^2).
\end{align*}
The sum over these indices in (\ref{214eq0}) is bounded by $O\left(\frac{1}{nr_n}\right)$.

Suppose $k=2$ and $j_1=1$. If $j\neq k_1$, then
\begin{align*}  
\mathbb{E}[|T_{1j2}S_{2j_1k_1}| ]\leq2\mathbb{E}[ A_{1j}A_{j2}+ A_{1j}A_{12}]=O(r_n^2).
\end{align*}
If $j=k_1$, then
\begin{align*}  
\mathbb{E}[|T_{1j2}S_{21j}| ]\leq2\mathbb{E}[ A_{1j}]=O(r_n).
\end{align*}
The sum over these indices in (\ref{214eq0}) is bounded by $O\left(\frac{1}{nr_n}\right)$. The same bound applies to the case $k=2$ and $k_1=1$.

In summary, we obtain
\begin{align}  \label{214eq4}
\mathbb{E}\left[\frac{(n+1)\mathbb{E}\left[A_{12}\mid X_1\right]\sum_{\substack{j\neq k\neq1, j_1\neq k_1\neq 2\\ j=2\ or\ k=2}}T_{1jk}S_{2j_1k_1}}{\mu_1^2\mu_2^2}\right]=O\left(r_n+\frac{1}{nr_n}\right).
\end{align}

Combining (\ref{214eq3}) and (\ref{214eq4}) yields the result of Lemma \ref{lempq}.

\qed

\end{document}